\newtheorem{rmq}{Remark}
\newtheorem{df}{Definition}
\newtheorem{thm}{Theorem}
\newtheorem{prop}{Proposition}
\newtheorem{lm}{Lemma}
\titleformat\section{}{}{0pt}{\Large\scshape\bfseries\filcenter\thesection{} - }
\newcommand{\tn}[1]{\mathbb{#1}}
\newcommand{\Ov}[1]{\overline{#1}}
\newcommand{\vr}{\varrho}
\newcommand{\R}{\mathbb{R}}
\newcommand{\N}{\mathbb{N}}
\newcommand{\QT}{(0,T)\times \Omega}
\newcommand{\QTb}{[0,T]\times \overline{\Omega}}
\newcommand{\bu}{\bm{u}}
\newcommand{\bU}{\bm{U}}
\newcommand{\bn}{\bm{n}}
\newcommand{\bV}{\bm{V}}
\newcommand{\dxdt}{\, \dx\,\dt}
\newcommand{\dx}{{\rm d} {x}}
\newcommand{\dt}{{\rm d} t }
\newcommand{\dtau}{{\rm d} \tau }
\DeclareMathOperator{\dv}{div}
\renewcommand{\d}{\partial}
\DeclareMathOperator{\A}{{\cal{A}}}
\DeclareMathOperator{\E}{{\cal{E}}}
\DeclareMathOperator{\Rr}{{\cal{R}}}
\DeclareMathOperator{\D}{{\cal{D}}}
\numberwithin{equation}{section}
\title{Existence of weak solutions for compressible Navier-Stokes equations with entropy transport}
\author{David Maltese$^1$ \and Martin Mich\'alek$^{2,4}$ \and Piotr B. Mucha$^3$ \and
Antonin Novotn\'y$^1$  \and Milan Pokorn\'y$^2$ \and Ewelina Zatorska$^{3,5}$ }
\date{   }
\begin{document}

\maketitle

\noindent
\let\thefootnote\relax\footnote{{\hspace{-0.6cm}1. Institut de Math\'ematiques de Toulon, EA 2134, BP20132, 83957 La Garde, France. \\
2. Faculty of Mathematics and Physics, Charles University in Prague, Ke Karlovu 3, 121 16 Praha 2, Czech Republic. \\
3. Institute of Applied Mathematics and Mechanics, University of Warsaw, ul. Banacha 2, 02-097 Warsaw, Poland.\\
4. Institute of Mathematics of the Czech Academy of Sciences, \v{Z}itn\'{a} 25, 110 00 Praha, Czech Republic.
\\
5. Department of Mathematics, Imperial College London, 180 Queen's Gate,  London SW7 2AZ, United Kingdom.}}

{\bf Abstract.} We consider the compressible Navier-Stokes system with variable entropy.
The pressure is a nonlinear function of the density and the entropy/potential temperature which, unlike in the Navier-Stokes-Fourier system, satisfies only the transport equation. We provide existence results within three  alternative weak formulations of the corresponding classical problem. Our constructions hold for the optimal range of the adiabatic coefficients from the point of view of the nowadays existence theory.

\section{Introduction}
The purpose of this paper is to analyze the model of flow of compressible  viscous fluid with variable entropy. Such flow can be described by the compressible Navier-Stokes equations coupled with an additional equation describing the evolution of the entropy. In case when the conductivity is neglected, the changes of the entropy are solely due to the transport and the whole system can be written as:
\begin{subequations}\label{1}
\begin{equation}\label{cont1_1}
\partial_t \vr + \dv( \vr \bu) = 0~\text{in}~\QT,
\end{equation}
\begin{equation}\label{content1_1}
\partial_t (\vr s) + \dv (\vr s \bu ) =0~\text{in}~\QT,
\end{equation}
\begin{equation}\label{mom1_1}
\partial_t( \vr \bu) + \dv( \vr \bu \otimes \bu) + \nabla p  = \dv{\tn{S}}  ~\text{in}~\QT,
\end{equation}
\end{subequations}
where the unknowns are the density $\vr \colon (0,T) \times \Omega \to {\R_+}\cup \{0\}$, the entropy $s\colon (0,T) \times \Omega \to \R_+$ and the velocity of fluid $\bu\colon \QT \to \R^3$, and where $\Omega$ is  a three dimensional domain with a smooth boundary $\d \Omega$.

The momentum, the continuity and the entropy equations are additionally coupled by the form of the pressure $p$, we assume that
\begin{equation} \label{4}
p(\vr,s) = \vr^\gamma {\cal T}(s), \quad \gamma>1,
\end{equation}
where ${\cal T}(\cdot)$ is a given smooth and strictly monotone function from $\R_+$ to $\R_+$,  in particular 
${\cal T}(s)>0$ for $s>0$.

We assume that the fluid is Newtonian and that the viscous part of the stress tensor is of the following form
$$
\tn {S} = \tn{S}(\nabla \bu)=  2\mu \Big(\tn{D}(\bu) -\frac 13 \dv\bu \tn{I}\Big)  +\eta  \dv_x \bu \tn{I}
$$
with $\tn{D}(\bu)=\frac 12 (\nabla \bu +\nabla \bu ^T)$.
Viscosity coefficients $\mu$ and $\eta$ are  assumed to be constant, hence we can write
$$
\dv{\tn{S}}(\nabla \bu) = \mu \Delta \bu + (\mu + \lambda) \nabla \dv \bu
$$
with $\lambda = \eta -\frac 23 \mu$. To keep the ellipticity of the Lam\'e operator we require that
\begin{equation}\label{viscondition}
\mu>0, \quad 3\lambda + 2\mu > 0.
\end{equation}

The system is supplemented by the initial and the boundary conditions:
\begin{equation}\label{2}
\vr(0,x)= \vr_0(x),~ (\vr s)(0,x)=S_0(x),~ (\vr\bu)(0,x) = \bm{q}_0(x),
\end{equation}
\begin{equation}\label{3}
\bu_{| (0,T) \times \partial \Omega} = \bm{0}.
\end{equation}

System (\ref{1}) is a model of motion of compressible viscous gas with variable entropy transported by the flow. The quantity $\theta=[{\cal T}(s)]^{1/\gamma}$ can be also interpreted as a potential
temperature in which case the pressure (\ref{4}) takes the form $(\rho\theta)^\gamma$ and has been studied in \cite{FEKLNOZA,LM}.

We aim at proving the existence of global in time weak solutions to system  (\ref{1}). Note that at least for smooth solution the continuity equation (\ref{cont1_1}) allows us to reformulate  (\ref{content1_1}) as a pure transport equation for $s$, we have
\begin{subequations}\label{1e}
\begin{equation}\label{conte_1}
\partial_t \vr + \dv( \vr \bu) = 0~\text{in}~\QT,
\end{equation}
\begin{equation}\label{ent_trans}
\partial_t s + \bm{u} \cdot \nabla s = 0~\text{in}~\QT,
\end{equation}
\begin{equation}\label{mome_1}
\partial_t( \vr \bu) + \dv( \vr \bu \otimes \bu) + \nabla p  = \dv{\tn{S}}  ~\text{in}~\QT.
\end{equation}
\end{subequations}
In contrast to entropy equation in  system (\ref{1}) the above form is insensitive to appearance of vacuum states; in fact it is completely decoupled from the continuity equation. The regularity of the density  in the compressible Navier-Stokes-type systems is in general rather delicate matter. Therefore, one can expect that proving the existence of solutions to system (\ref{1}) requires more severe assumptions than to get a relevant solution to (\ref{1e}).
This observation will be reflected in the range of parameter $\gamma$ which determines the quality of {a priori} estimates for the argument of the pressure -- $Z=\vr[{\cal T}(s)]^{1\over\gamma}$ according to the notation from above.

In order to clarify this issue a little more let us introduce a third formulation of system (\ref{1}) describing the evolution of the pressure argument $Z= \vr [{\cal T}(s)]^{\frac 1\gamma}$ instead of the entropy itself. We have:
\begin{subequations}\label{2a}
\begin{equation}\label{cont1}
\partial_t \vr + \dv( \vr \bu) = 0~\text{in}~\QT,
\end{equation}
\begin{equation}\label{content1}
\partial_t Z + \dv(Z \bu) =0~\text{in}~\QT,
\end{equation}
\begin{equation}\label{mom1}
\partial_t( \vr \bu) + \dv( \vr \bu \otimes \bu) + \nabla Z^\gamma   = \dv{\tn{S}}~\text{in}~\QT.
\end{equation}
\end{subequations}
Again,  the above formulation is equivalent with the previous ones provided the solution is regular enough, which, however,  may not be true in case of weak solutions.

 The above discussion motivates distinction between the cases when the evolution of the entropy  is described by the continuity, the transport or the renormalized transport equation. Indeed, the form of the entropy equation, although used to describe the same phenomena, is a diagnostic marker indicating the notion of plausible solution to the whole system.
Our paper contains an existence analysis for all three systems: \eqref{1}, \eqref{1e} and \eqref{2a} within suitably adjusted
definitions of weak solutions. Such an approach allows us to emphasise the implications between the solutions and to better understand the restrictions of renormalization technique. These issues, absent in the analysis of the standard single density systems, are of great importance for more complex multi-component or multi-phase flows. Our results show possible applications of nowadays classical tools in the analysis of the Navier-Stokes system to challenging problems, 
e.g. constitutive equation involving nonlinear combinations of hyperbolic quantities: densities, concentrations, etc.

The outline of the paper is the following. We first consider system (\ref{2a}), for which we are able to show the existence of a weak solution using standard technique available for the compressible Navier--Stokes system, see \cite{FNP}.
Next, using a special form of renormalization, and division of equation (\ref{content1}) by $\vr$, we show that we may replace (\ref{content1}) by (\ref{ent_trans}) and  finally by (\ref{content1_1}). We are able to handle (\ref{ent_trans}) as well as (\ref{content1}) for the optimal range of $\gamma$'s (i.e. $\gamma >\frac 32$), while getting equation (\ref{content1_1}) requires the assumption $\gamma\geq\frac 95$. This is a restriction under which the renormalization theory of  DiPerna--Lions \cite{diperna1989ordinary} can be applied.

In Section \ref{s:2}, we introduce the definition of the weak solutions to all three systems mentioned above and present our main existence theorems. Then, in Section \ref{s:3} we recall some specific classical results which are then used in the proof. Further, in Sections \ref{s:4} and \ref{s:5} we prove the existence of weak solutions to system (\ref{2a}); we introduce several levels of  approximations and prove the existence of solutions at each step by performing relevant limit passages in Sections \ref{s:6} and \ref{s:7}. Finally, in Section \ref{s:8} we  prove the existence of weak solution to systems (\ref{1}) and (\ref{1e}).

\section{Weak solutions, existence results}\label{s:2}

Throughout our analysis we naturally distinguish two different situations. They are associated to the magnitude of the adiabatic exponent $\gamma$.
From the point of view of theory of global in time weak solutions, it is reasonable to assume that
\begin{equation}\label{gammacondition}
\gamma > \frac{3}{2}.
\end{equation}
This assumption provides $L^1$ bound of the convective term and is necessary  for application of nowadays techniques.
Under this condition we will first prove the existence of a weak solution to system (\ref{2a}), see Theorem \ref{mainthm2}. Then we shall deduce from this result existence of weak solutions for the formulation (\ref{1e}) still under assumption (\ref{gammacondition}),  see Theorem \ref{mainthm2}.
This result is not equivalent to the existence of weak solutions  to system (\ref{1}) though. The latter can be proved solely   under the restriction
\begin{equation} \label{gammacondition2}
\gamma \geq \frac 95.
\end{equation}
Indeed, the latter more restricted range of $\gamma$'s enables to obtain $L^2$ estimate of the density and, as mentioned in the introduction, makes it possible to apply the DiPerna-Lions theory of the renormalized solutions to the  transport equation (\ref{ent_trans}) and to multiply it by $\vr$ within the class of weak solutions.
\medskip

\subsection{Weak solutions to system (\ref{1})}
Let us first introduce the definition of a weak solution to our original system (\ref{1}). We assume that the initial data (\ref{2}) satisfy:
$$
\vr_0:\Omega\to\R_+,\;\; s_0:\Omega\to \R_+,\;\;\bu_0:\Omega\to\R^3,
$$
\begin{equation}\label{initialdata1}
\vr_0\in L^\gamma(\Omega),\;\;\int_\Omega\vr_0{\rm d}x >0,
\end{equation}
$$
S_0=\vr_0 s_0,\;s_0\in L^\infty(\Omega),\;\;\bm{q}_0=\vr_0\bu_0\in L^{\frac{2\gamma}{\gamma+1}}(\Omega,\R^3). 
$$
The choice of nontrivial  initial condition for $s$ on the set $\{\vr_0=0\}$ will play an important role in the last section. Indeed, there is a certain difference in the proof of the case $s_0 = const$, and $s_0$ non-constant on this set.
We consider

\begin{df}\label{weaksolution}
Suppose the initial conditions satisfy (\ref{initialdata1}).
We say that the triplet  $(\vr,s,\bu)$ is a weak solution of problem (\ref{1})--(\ref{3}) if:
\begin{equation}
(\vr,s,\bu) \in  L^\infty(0,T;L^\gamma(\Omega)) \times L^\infty((0,T)\times \Omega) \times L^2(0,T;W^{1,2}_0(\Omega,\R^3)),
\end{equation}
and for any $t \in [0,T]$ we have:
\begin{description}
\item{(i)}
$ \vr \in C_w([0,T];L^\gamma(\Omega))$ and the continuity equation (\ref{cont1_1}) is satisfied in the weak sense
\begin{equation} \label{weak_contAUX}
\int_\Omega \vr(t,\cdot) \varphi(t,\cdot) \, \dx- \int_\Omega \vr_{0} \varphi(0,\cdot) \, \dx=
\int_0^t \int_\Omega \Big(\vr \partial_t \varphi + \vr \bu \cdot \nabla \varphi\Big)  \, \dx \, \dtau, \forall \varphi \in C^1(\QTb);
\end{equation}
\item{(ii)}
$ \vr s \in C_w([0,T];L^\gamma(\Omega))$ and   equation (\ref{content1_1}) is satisfied in the weak sense
\begin{equation} \label{weak_ZAUX}
\int_\Omega (\vr s)(t,\cdot) \varphi(t,\cdot) \, \dx- \int_\Omega S_{0} \varphi(0,\cdot) \, \dx=
\int_0^t \int_\Omega \Big(\vr s \partial_t \varphi + \vr s \bu \cdot \nabla \varphi\Big)  \, \dx \, \dtau, \forall \varphi \in C^1(\QTb);
\end{equation}
\item{(iii)} $ \vr \bu \in C_w([0,T];L^{\frac{2\gamma}{\gamma+1}}(\Omega,\R^3))$ and
the momentum equation (\ref{mom1_1}) is satisfied in the weak sense
\begin{multline}\label{weak_momAUX}
\int_\Omega (\vr \bu) (t,\cdot) \cdot \bm{\psi}(t,\cdot) \, \dx- \int_\Omega \bm{q}_0 \cdot \bm{\psi}(0,\cdot) \, \dx=
\int_0^t \int_\Omega \Big(\vr \bu \cdot \partial_ t \bm{\psi}  + \vr \bu \otimes \bu : \nabla \bm{ \psi}   \\
+ \vr^\gamma {\cal T}(s) \dv \bm{\psi} - \mathbb{S}(\nabla \bu) : \nabla \bm{ \psi} \Big)  \, \dx \, \dtau, \forall \bm{\psi} \in C_c^1([0,T] \times \Omega,\R^3);
\end{multline}
\item{(iv)}
the energy inequality
\begin{equation}\label{energystep3AUX}
 {\cal E}^1(\vr,s,\bu)(t)  + \int_0^t \int_\Omega \Big(\mu |\nabla \bu|^2 + (\mu +\lambda)(\dv \bu)^2 \Big) \, \dx \, \dtau \le {\cal E}^1(\vr_{0},s_0,\bu_{0})
\end{equation}
holds for a.a $ t \in (0,T) $, where
$$
{\cal E}^1(\vr,s,\bu) = \int_\Omega\Big(\frac 12 \vr |\bu|^2 + \frac{\vr^\gamma {\cal T}(s)}{\gamma-1} \Big)\, \dx.
$$
\end{description}

\end{df}

The first  main result concerning solutions meant by Definition \ref{weaksolution} reads.
\begin{thm}\label{mainthm1}
Let $\mu,\lambda$ satisfy (\ref{viscondition}), $\gamma \geq \frac 95$ and the initial data $(\vr_0,S_0,\bm{q}_0)$ satisfy (\ref{initialdata1}).
Then there exists a weak solution $(\vr,s,\bu)$ to problem (\ref{1})--(\ref{3}) in the sense of Definition \ref{weaksolution}.
\end{thm}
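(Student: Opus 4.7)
The plan is to bootstrap from Theorem \ref{mainthm2}: under the weaker hypothesis $\gamma>3/2$ that result already delivers a weak solution $(\vr,s,\bu)$ of (\ref{1e}), and under the stronger restriction $\gamma\geq 9/5$ I will show that the very same triplet additionally satisfies equation (\ref{content1_1}) in the weak sense, hence is a weak solution of (\ref{1}) in the sense of Definition \ref{weaksolution}. The momentum equation and the energy inequality are common to both formulations, so they transfer without further work; the only real task is to produce the equation for $\vr s$.

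First I would record the improved integrability of the density. The Bogovskii--type test function argument used in the construction of solutions to (\ref{2a}) produces a bound of the form $\vr\in L^{\frac{5\gamma}{3}-1}((0,T)\times\Omega)$; for $\gamma\geq 9/5$ this gives in particular $\vr\in L^2(\QT)$. Combined with $\bu\in L^2(0,T;W^{1,2}_0(\Omega;\R^3))$, this is exactly the integrability threshold at which the DiPerna--Lions theory \cite{diperna1989ordinary} becomes applicable, and the continuity equation (\ref{cont1_1}) admits the renormalized formulation
\[
\d_t b(\vr)+\dv\big(b(\vr)\bu\big)+\big(\vr b'(\vr)-b(\vr)\big)\dv\bu=0
\]
in $\D'(\QT)$ for every $b\in C^1([0,\infty))$ of moderate growth. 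Simultaneously, the bound $s\in L^\infty$ together with the same regularity of $\bu$ renders the pure transport equation (\ref{ent_trans}) renormalized in the same sense, i.e.\ $\d_t\beta(s)+\bu\cdot\nabla\beta(s)=0$ for every $\beta\in C^1(\R)$.

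The heart of the proof is then the combination of these two renormalized equations to recover the continuity--type identity for the product $\vr s$. Formally, summing $s\big[\d_t\vr+\dv(\vr\bu)\big]=0$ and $\vr\big[\d_t s+\bu\cdot\nabla s\big]=0$ yields $\d_t(\vr s)+\dv(\vr s\bu)=0$. To make this rigorous I would proceed by truncation: pick sequences $b_n(\vr)\to \vr$ and $\beta_n(s)\to s$, multiply the renormalized transport for $\beta_n(s)$ by $b_n(\vr)$ and the renormalized continuity for $b_n(\vr)$ by $\beta_n(s)$, add the two equations in $\D'(\QT)$, and pass to the limit $n\to\infty$. The $L^2$ bound on $\vr$, the $L^\infty$ bound on $s$, and the $L^2(W^{1,2})$ bound on $\bu$ provide uniform control of every term, and the DiPerna--Lions commutator estimates absorb the residual error generated by the cut--offs. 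Once the equation for $\vr s$ is established, the weak--in--time continuity $\vr s\in C_w([0,T];L^\gamma(\Omega))$ and the attainment of the initial datum $(\vr s)(0,\cdot)=\vr_0 s_0=S_0$ follow from the standard trace argument for continuity--type equations; observe that $S_0$ is independent of the values of $s_0$ on $\{\vr_0=0\}$, which sidesteps the subtleties with non--constant $s_0$ on the vacuum set alluded to in the introduction.

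The main obstacle is precisely this simultaneous passage to the limit in the truncated products: the bound $\gamma\geq 9/5$ is forced by the need to have $\vr\in L^2_{t,x}$, below which the DiPerna--Lions commutator cannot be closed at the same time for the continuity and the transport equation, and the equation for $\vr s$ cannot be recovered by this procedure. This is exactly the dichotomy between Theorems \ref{mainthm1} and \ref{mainthm2}: weaker $\gamma$'s still yield the transport form (\ref{ent_trans}) of the entropy balance, but only the threshold $\gamma\geq 9/5$ allows to upgrade it to the conservative form (\ref{content1_1}).
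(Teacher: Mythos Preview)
Your plan is essentially the paper's: exploit the Bogovskii estimate $\vr\in L^{\gamma+\theta}(\QT)$ with $\theta=\frac{2}{3}\gamma-1$ to get $\vr\in L^2(\QT)$ once $\gamma\geq 9/5$, and then combine the continuity equation for $\vr$ with the transport equation for $s$ via DiPerna--Lions regularization to recover the conservative form $\partial_t(\vr s)+\dv(\vr s\bu)=0$. The paper's implementation is slightly leaner than your double truncation---it mollifies only $\vr$ to $\vr_\eta$, multiplies the pointwise mollified continuity equation by the bounded $s$ and the transport equation for $s$ by the smooth $\vr_\eta$, sums, and lets the Friedrichs commutator $r^1_\eta s$ vanish in $L^1$ precisely because $\vr\in L^2$ and $s\in L^\infty$---but the substance is identical; one small correction is that the result already delivering the transport form (\ref{ent_trans}) for all $\gamma>3/2$ is Theorem~\ref{mainthm3}, not Theorem~\ref{mainthm2} (the latter only produces $(\vr,Z,\bu)$ for (\ref{2a}), and the passage from $Z$ to $s$ is itself part of Section~\ref{s:8}).
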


\subsection{Weak solution to system (\ref{2a})}

The restriction on $\gamma$  in Theorem \ref{mainthm1} is obviously not satisfactory as all the physically reasonable values of  $\gamma$ are less or equal that $\frac 53$. We are able to relax this constraint for system (\ref{2a}).
Formally, taking $Z= \vr ({\cal T}(s))^{\frac 1\gamma}$ in (\ref{2a}) one can recover our original system (\ref{1}). However, for the weak solution this formal argument cannot be made rigorous unless we assume that $\gamma\geq \frac 95$.
Nevertheless, system (\ref{2a}) is a good starting point for our considerations.  Indeed, for reasonable initial and boundary conditions it can be shown that it possesses a weak solution for $\gamma>\frac{3}{2}$, using more or less standard approach. Proving existence of solutions directly for system (\ref{1}) seems not to be so simple.

We assume that the initial data for system (\ref{2a}) are
$$
\vr_0:\Omega\to\R_+,\;\; s_0:\Omega\to \R_+,\;\;\bu_0:\Omega\to\R^3,
$$
\begin{equation}\label{2Z}
\vr(0,x)= \vr_0(x),~ Z(0,x)=Z_0(x),~ (\vr\bu)(0,x) = \bm{q}_0(x)=\vr_0\bu_0(x),
\end{equation}
and they satisfy
\begin{equation}\label{initialdata2}
\begin{array}{c}
\displaystyle (\vr_0,Z_0) \in L^\gamma(\Omega)^2, \quad \vr_0, Z_0  \geq 0 \mbox{ a.e. in } \Omega,  \quad \int_\Omega \vr_0 \, \dx>0, \\
\displaystyle 0 \leq c_\star \vr_0 \leq Z_0 \leq c^\star \vr_0 \mbox{ a.e. in } \Omega, \quad 0<c_\star\leq c^\star <\infty,   \quad
\bm{q}_0 \in L^{\frac{2\gamma}{\gamma+1}}(\Omega,\R^3).
\end{array}
\end{equation}

Then we have

\begin{df}\label{weaksolutionaux}
Suppose that the initial conditions satisfy (\ref{initialdata2}).
We say that the triplet  $(\vr,Z,\bu)$ is a weak solution of problem (\ref{2a}) with the initial and boundary conditions (\ref{3}), (\ref{2Z}) if
\begin{equation}
(\vr,Z,\bu) \in  L^\infty(0,T;L^\gamma(\Omega)) \times L^\infty(0,T; L^\gamma(\Omega)) \times L^2(0,T;W^{1,2}_0(\Omega,\R^3)),
\end{equation}
and for any $t \in (0,T]$ we have:
\begin{description}
\item{(i)}
$ \vr \in C_w([0,T];L^\gamma(\Omega))$ and the continuity equation (\ref{cont1}) is satisfied in the weak sense
\begin{equation} \label{weak_cont}
\int_\Omega \vr(t,\cdot) \varphi(t,\cdot) \, \dx- \int_\Omega \vr_{0} \varphi(0,\cdot) \, \dx=
 \int_0^t \int_\Omega \Big(\vr \partial_t \varphi + \vr \bu \cdot \nabla \varphi\Big)  \, \dx \, \dtau, \forall \varphi \in C^1(\QTb);
\end{equation}
\item{(ii)}
$ Z \in C_w([0,T];L^\gamma(\Omega))$ and   equation (\ref{content1}) is satisfied in the weak sense
\begin{equation} \label{weak_Z}
\int_\Omega Z(t,\cdot) \varphi(t,\cdot) \, \dx- \int_\Omega Z_{0} \varphi(0,\cdot) \, \dx=
 \int_0^t \int_\Omega \Big(Z \partial_t \varphi + Z \bu \cdot \nabla \varphi\Big)  \, \dx \, \dtau, \forall \varphi \in C^1(\QTb);
\end{equation}
\item{(iii)} $ \vr \bu \in C_w([0,T];L^{\frac{2\gamma}{\gamma+1}}(\Omega,\R^3))$ and
the momentum equation (\ref{mom1_1}) is satisfied in the weak sense
\begin{multline}\label{weak_mom}
\int_\Omega (\vr \bu) (t,\cdot) \cdot \bm{\psi}(t,\cdot) \, \dx- \int_\Omega \bm{q}_0 \cdot \bm{\psi}(0,\cdot) \, \dx=
\int_0^t \int_\Omega \Big(\vr \bu \cdot \partial_ t \bm{\psi}  + \vr \bu \otimes \bu : \nabla \bm{ \psi}  \\
+ Z^\gamma \dv \bm{\psi} - \mathbb{S}(\nabla \bu) : \nabla \bm{ \psi} \Big)  \, \dx \, \dtau, \forall \bm{\psi} \in C_c^1([0,T] \times \Omega,\R^3);
\end{multline}
\item{(iv)}
the energy inequality
\begin{equation}\label{energystep3}
 {\cal E}^2(\vr,Z,\bu)(t)  + \int_0^t \int_\Omega \Big(\mu |\nabla \bu|^2 + (\mu +\lambda)(\dv \bu)^2\Big)  \, \dx \, \dtau \le {\cal E}^2(\vr_{0},Z_0,\bu_{0})
\end{equation}
holds for a.a $ t \in (0,T) $, where
\begin{equation}\label{E2}
{\cal E}^2(\vr,Z,\bu) = \int_\Omega\Big(\frac 12 \vr |\bu|^2 + \frac{Z^\gamma}{\gamma-1} \Big) \dx.
\end{equation}
\end{description}

\end{df}

Before presenting the existence result for the auxiliary problem, let us recall the definition of  a renormalized solution to equation (\ref{content1}):
\begin{df} \label{renor_cont}
We say that equation  (\ref{content1}) holds in the sense of renormalized  solutions, provided  $(Z,\bu)$, extended by zero outside of $\Omega$, satisfy
\begin{equation}\label{renorent}
\partial_t b(Z) + \dv(b(Z)\bu) + \big(b'(Z)Z-b(Z)\big) \dv \bu = 0~\text{in}~\D'((0,T)\times \R^3),
\end{equation}
where
\begin{equation}\label{regb}
b \in C^1(\R), \quad b'(z)=0,\quad \forall z \in \R~\text{large enough.}
\end{equation}
\end{df}

We have the following existence result for solutions defined by Definition \ref{weaksolutionaux}
\begin{thm}\label{mainthm2}
Let $\mu,\lambda$ satisfy (\ref{viscondition}), $\gamma >\frac 32$, and the initial data $(\vr_0,Z_0,\bm{q}_0)$ satisfy (\ref{initialdata2}).

Then there exists a weak solution $(\vr,Z,\bu)$ to problem (\ref{2a}) with boundary conditions (\ref{3}), in the sense of Definition \ref{weaksolutionaux}. Moreover, $(Z,\bu)$ solves (\ref{content1}) in the renormalized sense and
$$
0 \leq c_\star \vr \leq Z \leq c^\star \vr
$$
a.e. in $(0,T)\times \Omega$.
\end{thm}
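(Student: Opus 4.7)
The plan is to follow the three-level approximation scheme of Feireisl-Novotn\'y-Petzeltov\'a \cite{FNP}, adapted to the simultaneous presence of two transport-type equations. The first layer regularises both the continuity and the $Z$-equation by an artificial diffusion,
\begin{equation*}
\partial_t\rho+\dv(\rho\bu)=\varepsilon\Delta\rho,\qquad
\partial_t Z+\dv(Z\bu)=\varepsilon\Delta Z,
\end{equation*}
with homogeneous Neumann data, the second replaces the pressure by $Z^\gamma+\delta(\rho^\beta+Z^\beta)$ for $\beta$ sufficiently large (the term $\delta \rho^\beta$ is a purely auxiliary pressure that ensures $\rho \in L^\beta$ at the higher levels and disappears in the last limit), and the third projects the momentum equation onto a finite-dimensional Galerkin space $X_n$; the usual $\varepsilon$-correction terms are added to the momentum equation so that the natural energy identity is preserved. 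Solvability at the fully approximate level is obtained by a Schauder fixed-point argument in $\bu$: for a given velocity, the two parabolic problems admit unique smooth solutions, which are fed back into the Galerkin ODE for $\bu$.

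A key observation, used at every step of the scheme, is that the comparison $c_\star\rho\le Z\le c^\star\rho$ propagates through all the limits. The function $W=Z-c^\star\rho$ satisfies the same linear parabolic equation $\partial_t W+\dv(W\bu)=\varepsilon\Delta W$ with $W(0,\cdot)\le 0$, so the maximum principle gives $W\le 0$; the lower bound is symmetric. This comparison is stable under strong convergence and thus survives the limits $n\to\infty$ and $\varepsilon\to 0$, where strong convergence of both $\rho$ and $Z$ follows from parabolic smoothing, from the DiPerna-Lions renormalization \cite{diperna1989ordinary} and from Aubin-Lions type compactness for $\rho\bu$, as in the classical setting. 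The energy identity together with a Bogovskii-type test function in the momentum equation provides the uniform bounds $\rho,Z\in L^\infty(0,T;L^\gamma(\Omega))$, $\bu\in L^2(0,T;W^{1,2}_0)$ and the improved space-time integrability $Z\in L^{\gamma+\theta}((0,T)\times\Omega)$ for some $\theta>0$; in particular, for $\gamma>3/2$ one has $Z\in L^2((0,T)\times\Omega)$, the threshold at which the DiPerna-Lions renormalization applies to the limit $Z$-equation.

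The main obstacle is the artificial pressure limit $\delta\to 0$, where one must identify the weak limit of $Z_\delta^\gamma$ with $Z^\gamma$. The plan is to adapt Feireisl's effective viscous flux identity by testing the limit momentum equation with $\psi(t)\eta(x)\nabla\Delta^{-1}[1_\Omega Z_\delta]$ (instead of the usual $\nabla\Delta^{-1}[1_\Omega \rho_\delta]$); the commutator arising from the convective term $\rho_\delta\bu_\delta\otimes\bu_\delta$ is then handled using the renormalized transport equation for $Z$, which is legitimate thanks to $Z\in L^2$. This yields
\begin{equation*}
\overline{Z^\gamma\, Z}-\overline{Z^\gamma}\,Z=(2\mu+\lambda)\bigl(\overline{Z\,\dv\bu}-Z\,\overline{\dv\bu}\bigr),
\end{equation*}
with bars denoting weak limits. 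Monotonicity of $s\mapsto s^\gamma$ forces the left-hand side to be non-negative, hence so is the right-hand side. Testing the renormalized transport equations for $Z_\delta$ and for its limit $Z$ with $b(s)=s\log s$ (after the standard truncation) then gives
\begin{equation*}
\int_\Omega\bigl(\overline{Z\log Z}-Z\log Z\bigr)(t,\cdot)\,\dx=\int_0^t\!\int_\Omega\bigl(Z\,\overline{\dv\bu}-\overline{Z\,\dv\bu}\bigr)\,\dx\,\dtau\le 0,
\end{equation*}
while the convexity of $s\mapsto s\log s$ provides the opposite inequality. Equality then forces $Z_\delta\to Z$ strongly in $L^1((0,T)\times\Omega)$, and the comparison bound transfers this to $\rho_\delta\to\rho$. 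The remaining passage to the limit in (\ref{weak_cont})-(\ref{weak_mom}) and in the energy inequality (\ref{energystep3}) is routine, and the renormalized equation (\ref{renorent}) for $(Z,\bu)$ follows from the strong convergence of $Z$.
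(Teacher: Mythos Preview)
Your overall architecture is close to the paper's, but there is a genuine gap in the $\delta\to 0$ step. You assert that ``for $\gamma>3/2$ one has $Z\in L^2((0,T)\times\Omega)$, the threshold at which the DiPerna--Lions renormalization applies to the limit $Z$-equation.'' This is false. The Bogovskii test yields only $Z\in L^{\gamma+\theta}$ with $\theta=\min\{\tfrac{2}{3}\gamma-1,\tfrac{\gamma}{2}\}$, so that $\gamma+\theta=\tfrac{5}{3}\gamma-1$ for $\gamma\le 6$. This reaches $2$ only when $\gamma\ge \tfrac{9}{5}$; for $\gamma\in(\tfrac{3}{2},\tfrac{9}{5})$ the limit $Z$ is \emph{not} known to be in $L^2$, and DiPerna--Lions cannot be invoked for the limit equation. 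For the same reason, testing the momentum equation with $\nabla\Delta^{-1}[1_\Omega Z_\delta]$ is not justified: the product $Z_\delta^\gamma\cdot Z_\delta$ is only controlled if $Z_\delta\in L^{\gamma+1}$, which would require $\theta\ge 1$, i.e. $\gamma\ge 3$.

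This is precisely why the paper, in the $\delta\to 0$ limit, replaces $Z_\delta$ by the truncations $T_k(Z_\delta)$ in the effective viscous flux identity (Lemma~\ref{effectivestep1}) and then invokes Feireisl's oscillation defect measure bound (Lemma~\ref{odm}) to show that
\[
\sup_{k\ge 1}\limsup_{\delta\to 0^+}\|T_k(Z_\delta)-T_k(Z)\|_{L^{\gamma+1}}<\infty,
\]
which by Lemma~\ref{ReSolFe} yields the renormalized equation for the limit $(Z,\bu)$ \emph{without} any $L^2$ hypothesis. The $Z\log Z$ argument is then run with the truncated functions $L_k$ and $T_k$ rather than directly. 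Your scheme would prove the theorem only for $\gamma\ge\tfrac{9}{5}$; to reach the full range $\gamma>\tfrac{3}{2}$ you must insert this truncation/oscillation-defect machinery. (A minor remark: the extra artificial pressure $\delta\rho^\beta$ you add is harmless but unnecessary, since the comparison $Z\ge c_\star\rho$ already transfers the $L^\beta$ bound from $Z$ to $\rho$; the paper uses only $\delta Z^\beta$.)
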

\subsection{Weak solution to system (\ref{1e})}
If we replace (\ref{content1_1}) by (\ref{ent_trans}) (using also the renormalization of the latter), the result is also much better than in Theorem \ref{mainthm1}, in fact optimal from the point of view of nowadays theory of compressible Navier--Stokes equations.
In order to formulate the result precisely, we first rewrite system (\ref{1e}) in a slightly different way. We look for a
 triplet $(\rho, \zeta, \bu)$ solving the system of equations
\begin{subequations}\label{wsstep5a}
\begin{equation}\label{contstep5}
\partial_t \vr + \dv(\vr \bu)= 0,
\end{equation}
\begin{equation}\label{transportstep5}
\partial_t \zeta + \bu\cdot \nabla \zeta= 0,
\end{equation}
\begin{equation}\label{momstep5}
\partial_t (\vr \bu) + \dv(\vr \bu \otimes \bu) + \nabla \left(\frac{\vr}{\zeta}\right)^\gamma  = \dv\tn{S}(\nabla \bu),
\end{equation}
\end{subequations}
with initial conditions
\begin{equation}\label{2zeta}
\vr(0,x)= \vr_0(x),~ \zeta(0,x)=\zeta_0(x),~ (\vr\bu)(0,x) = \bm{q}_0(x),
\end{equation}
such that $\zeta_0=\frac{\vr_0}{Z_0}$ and satisfying assumptions (\ref{initialdata2}), in particular
\begin{equation} \label{ICZETA}
 \zeta_0 \in \Big( (c^\star)^{-1},(c_\star)^{-1}\Big).
\end{equation}

Then the weak solution is defined as follows.
\begin{df}\label{weaksolution_without_rho}
Suppose the initial conditions $(\vr_0,\zeta_0, \bm{q}_0)$ satisfy (\ref{ICZETA}) and  (\ref{initialdata2}) (for $\vr_0$ and $\bm{q}_0$),
We say that the triplet  $(\vr,\zeta,\bu)$ is a weak solution of problem (\ref{wsstep5a})   emanating from the initial data $(\vr_0,\zeta_0,\bm{q}_0)$ if
\begin{equation}
(\vr,\zeta ,\bu) \in  L^\infty(0,T;L^\gamma(\Omega)) \times L^\infty((0,T)\times \Omega) \times L^2(0,T;W^{1,2}_0(\Omega,\R^3)),
\end{equation}
and for any $t \in (0,T]$ we have:
\begin{description}
\item{(i)}
$ \vr \in C_w([0,T];L^\gamma(\Omega))$ and the continuity equation (\ref{contstep5}) is satisfied in the weak sense
\begin{equation} \label{weak_contA}
\int_\Omega \vr(t,\cdot) \varphi(t,\cdot) \, \dx- \int_\Omega \vr_{0} \varphi(0,\cdot) \, \dx=
 \int_0^t \int_\Omega \Big(\vr \partial_t \varphi + \vr \bu \cdot \nabla \varphi\Big)  \, \dx \, \dtau, \forall \varphi \in C^1(\QTb);
\end{equation}
\item{(ii)}
$ \zeta \in C_w([0,T];L^\infty(\Omega))$  and   equation (\ref{transportstep5}) is satisfied in the weak sense
\begin{equation} \label{weak_zeta}
\int_\Omega \zeta(T,\cdot) \varphi(T,\cdot) \, \dx- \int_\Omega \zeta_{0} \varphi(0,\cdot) \, \dx=
\int_0^t \int_\Omega \Big(\zeta \partial_t \varphi + \zeta \dv{(\bu  \varphi)}\Big)  \, \dx \, \dtau, \forall \varphi \in C^1(\QTb);
\end{equation}
\item{(iii)} $ \vr \bu \in C_w([0,T];L^{\frac{2\gamma}{\gamma+1}}(\Omega,\R^3))$ and
the momentum equation (\ref{momstep5}) is satisfied in the weak sense
\begin{multline}\label{weak_momA}
\int_\Omega (\vr \bu) (t,\cdot) \cdot \bm{\psi}(t,\cdot) \, \dx- \int_\Omega \bm{q}_0 \cdot \bm{\psi}(0,\cdot) \, \dx=
\int_0^t \int_\Omega \Big(\vr \bu \cdot \partial_ t \bm{\psi}  + \vr \bu \otimes \bu : \nabla \bm{ \psi}   \\
+ \Big(\frac{\vr}{\zeta}\Big)^\gamma \dv \bm{\psi} - \mathbb{S}(\nabla \bu) : \nabla \bm{ \psi} \Big)  \, \dx \, \dtau, \forall \bm{\psi} \in C_c^1([0,T] \times \Omega,\R^3);
\end{multline}
\item{(iv)}
the energy inequality
\begin{equation}\label{energystep3A}
 {\cal E}^2(\vr,\vr/\zeta,\bu)(t)  + \int_0^t \int_\Omega \Big(\mu |\nabla \bu|^2 + (\mu +\lambda)(\dv \bu)^2 \Big) \, \dx \, \dtau \le {\cal E}^2(\vr_{0},\vr_{0}/\zeta_0,\bu_{0})
\end{equation}
holds for a.a $ t \in (0,T) $, where ${\cal E}^2$ is defined through (\ref{E2}).

\end{description}

\end{df}

The last result concerns the existence of solutions meant by Definition \ref{weaksolution_without_rho}.

\begin{thm}\label{mainthm3}
Let $\mu,\lambda$ satisfy (\ref{viscondition}), $\gamma >\frac 32$, and the initial data $(\vr_0,\zeta_0,\bm{q}_0)$ satisfy (\ref{ICZETA}) and (\ref{initialdata2}) (for $\vr_0$ and $\bm{q}_0$).

Then there exists a weak solution $(\vr,\zeta,\bu)$ to problem (\ref{wsstep5a}) with boundary conditions (\ref{3}), in the sense of Definition \ref{weaksolution_without_rho}. Moreover, $(\vr,\bu)$ solves (\ref{contstep5}) and $(\zeta,\bu)$ solves (\ref{transportstep5}) in the renormalized sense.
\end{thm}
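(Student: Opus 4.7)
The plan is to deduce Theorem \ref{mainthm3} from Theorem \ref{mainthm2} by reconstructing the potential variable $\zeta$ from the ratio $\vr/Z$ of the solution furnished by the auxiliary system (\ref{2a}).

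Setting $Z_0:=\vr_0/\zeta_0$, the two-sided bound (\ref{ICZETA}) gives $c_\star\vr_0\le Z_0\le c^\star\vr_0$ and $Z_0\in L^\gamma(\Omega)$, so the triple $(\vr_0,Z_0,\bm{q}_0)$ satisfies the hypotheses (\ref{initialdata2}) of Theorem \ref{mainthm2}. Applying that theorem yields a weak solution $(\vr,Z,\bu)$ of (\ref{2a}) with $(Z,\bu)$ solving (\ref{content1}) in the renormalized sense and with the pointwise comparison $c_\star\vr\le Z\le c^\star\vr$ preserved throughout $(0,T)\times\Omega$. I then define $\zeta:=\vr/Z$ on $\{Z>0\}$ and extend it arbitrarily, say by $\zeta_0$, on the complement. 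The sandwich forces $\{Z=0\}=\{\vr=0\}$ up to null sets, so $\zeta\in[1/c^\star,1/c_\star]$ a.e.\ and $\zeta\in L^\infty((0,T)\times\Omega)$. Since $Z^\gamma=(\vr/\zeta)^\gamma$ a.e.\ (with the convention $0/0=0$ on the common null set), the continuity equation for $\vr$, the momentum equation (\ref{weak_mom}) and the energy inequality (\ref{energystep3}) transfer verbatim into (\ref{weak_contA}), (\ref{weak_momA}) and (\ref{energystep3A}).

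The crux is to verify that $\zeta$ satisfies the transport equation (\ref{transportstep5}) weakly and in the renormalized sense. Formally one has the identity
\begin{equation*}
Z\bigl(\partial_t \zeta+\bu\cdot\nabla\zeta\bigr)
=\bigl[\partial_t\vr+\dv(\vr\bu)\bigr]-\zeta\bigl[\partial_t Z+\dv(Z\bu)\bigr]=0,
\end{equation*}
so on $\{Z>0\}$ one expects $\partial_t\zeta+\bu\cdot\nabla\zeta=0$. To rigorize this I plan to combine the renormalized form of (\ref{content1}) for $Z$ (granted by Theorem \ref{mainthm2}) with the renormalized continuity equation for $\vr$, which is produced by the Lions--Feireisl effective-viscous-flux argument inside the proof of Theorem \ref{mainthm2} and holds whenever $\gamma>\tfrac{3}{2}$. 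Choosing a sequence of admissible renormalizations $b_k\in C^1(\R)$ with $b_k'$ compactly supported that approximate the logarithm on the essential range of the solutions, applying them to both $Z$ and $\vr$, and subtracting, delivers $\partial_t\log\zeta+\bu\cdot\nabla\log\zeta=0$ in $\D'((0,T)\times\Omega)$; composition with smooth bounded functions then recovers (\ref{transportstep5}) and the renormalized transport equation for any admissible $b$.

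The main obstacle lies precisely in this passage to the limit: the logarithm is not admissible in the sense of Definition \ref{renor_cont} because its derivative has no compact support near zero, and $\vr,Z$ may vanish simultaneously on non-trivial sets. The decisive structural feature is the pointwise bound $c_\star\vr\le Z\le c^\star\vr$, which forces the singularities at the common vanishing set of $\vr$ and $Z$ to cancel in the ratio, keeps $\zeta$ uniformly bounded, and allows the limiting identity to be read off without ever dividing by a quantity that may vanish. Once the transport equation is secured, the weak-continuity property $\zeta\in C_w([0,T];L^\infty(\Omega))$ and the formulation (\ref{weak_zeta}) follow from standard density and weak-continuity arguments, completing the verification of Definition \ref{weaksolution_without_rho}.
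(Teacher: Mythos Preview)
Your route is genuinely different from the paper's, and it has a real gap that you have not closed.

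\medskip
\noindent\textbf{What the paper does.} The paper does \emph{not} try to extract $\zeta$ from the final solution $(\vr,Z,\bu)$ furnished by Theorem~\ref{mainthm2}. Instead it returns to the $\delta$-approximation level (system (\ref{wsstep2})), where $\vr_\delta,Z_\delta\in L^\beta$ with $\beta\ge 4$. At that level one has the $L^2$-integrability needed to run the ``divide by $\vr_\eta+\lambda$'' argument of Section~8.1 and obtain the pure transport equation for $\zeta_\delta=\vr_\delta/Z_\delta$. Only then is the limit $\delta\to0^+$ taken: the strong convergence $Z_\delta\to Z$ (proved in Section~\ref{artif_pressure_limit}) gives $\vr_\delta=Z_\delta\zeta_\delta\rightharpoonup Z\zeta=\vr$, and the transport equation for the limit $\zeta$ is recovered via a Div--Curl argument together with the generalized effective-viscous-flux identity of Remark~\ref{gen_eff_visc_flux} to identify $\overline{\zeta\dv\bu}=\zeta\dv\bu$.

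\medskip
\noindent\textbf{Where your argument breaks.} Your plan hinges on having the renormalized continuity equation for $\vr$ at the final level so that you can take $b_k\approx\log$ in both the $\vr$- and the $Z$-equation and subtract. But Theorem~\ref{mainthm2} asserts the renormalized property only for $(Z,\bu)$; the effective-viscous-flux and oscillation-defect-measure computations in Section~\ref{artif_pressure_limit} are carried out with $T_k(Z_\delta)$ and use the algebraic inequality $(z^\gamma-y^\gamma)(T_k(z)-T_k(y))\ge|T_k(z)-T_k(y)|^{\gamma+1}$, which couples the pressure variable to the same truncated variable. There is no analogous step for $\vr$, and the sandwich $c_\star\vr\le Z\le c^\star\vr$ alone does not convert strong convergence of $Z_\delta$ into strong convergence of $\vr_\delta$ (nor does it give bounded oscillation defect for $\vr_\delta$), because the ratio $\zeta_\delta$ is only known to converge weakly-$\star$. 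Without the renormalized equation for $\vr$ (or, equivalently, strong convergence of $\vr_\delta$), the subtraction you propose cannot be justified for $\gamma<\tfrac95$; this is exactly the obstacle that forces the paper to derive the transport equation for $\zeta_\delta$ \emph{before} letting $\delta\to0$ and to use compensated compactness in the limit.
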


Using the result of Theorem \ref{mainthm3}, we may easily obtain a solution to  system (\ref{1e}). Indeed, we may define
$$
s = {\cal T}^{-1}(\zeta^{-\gamma})
$$
and use the fact that equation (\ref{transportstep5}) holds in the renormalized sense.

\begin{rmq}\label{r0}
Note that in two space dimensions, all results hold for any $\gamma >1$. In both two and three space dimensions,  we can also include a non-zero external force on the right-hand side of the momentum equation, i.e. we have additionally the term $\vr\bm{f}$ on the right-hand side of (\ref{mom1_1}), (\ref{mome_1}) and (\ref{mom1}). For $\bm{f} \in L^\infty((0,T)\times \Omega,\R^3)$ we would get the same results as in Theorems \ref{mainthm1}, \ref{mainthm2} and \ref{mainthm3}.
\end{rmq}

\section{Auxiliary results}\label{s:3}

Before proving  our main theorems, we recall several auxiliary results used in this paper. These are mostly standard results and we include them them only for the sake of clarity of presentation.

\begin{lm} \label{KIneq}
Let $\mu>0$, $\lambda + 2\mu >0$. Then there exists a positive constant $c$ such that
\begin{equation}\label{Korn}
\mu \| \nabla \bu \|^2_{L^2(\Omega, \R^{3 \times 3})}  + (\lambda + \mu)\|\dv \bu \|_{L^2(\Omega)} \geq c \|\nabla \bu  \|_{L^2(\Omega, \R^{3 \times 3})}.
\end{equation}
\end{lm}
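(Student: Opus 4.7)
The inequality is essentially a pointwise/algebraic bound for the Lam\'e operator, made true thanks to the homogeneous Dirichlet condition baked into $W^{1,2}_0(\Omega,\R^3)$ (which, implicitly, is the functional setting). My plan is to split on the sign of $\lambda+\mu$ and reduce everything to the elementary fact
\[
\|\dv\bu\|_{L^2(\Omega)}^2 \le \|\nabla\bu\|_{L^2(\Omega,\R^{3\times 3})}^2
\qquad\text{for all }\bu\in W^{1,2}_0(\Omega,\R^3).
\]

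First, I would dispose of the easy case $\lambda+\mu\ge 0$. Here the second term on the left-hand side is nonnegative, so the inequality is immediate with constant $c=\mu>0$; no estimate on $\dv\bu$ is used at all.

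Next, the case $\lambda+\mu<0$ (possible since we only assume $\lambda+2\mu>0$). I would establish the divergence-by-gradient bound displayed above and use it to absorb the sign-losing term:
\[
\mu\|\nabla\bu\|_{L^2}^2 + (\lambda+\mu)\|\dv\bu\|_{L^2}^2
\;\ge\; \mu\|\nabla\bu\|_{L^2}^2 + (\lambda+\mu)\|\nabla\bu\|_{L^2}^2
\;=\;(\lambda+2\mu)\|\nabla\bu\|_{L^2}^2,
\]
giving $c=\lambda+2\mu>0$. The displayed $L^2$-bound on the divergence is the one spot that actually requires argument; I would first prove it for $\bu\in C^\infty_c(\Omega,\R^3)$ by extending by zero to $\R^3$ and applying Plancherel, since $\widehat{\dv\bu}(\xi)=i\,\xi\cdot\hat{\bu}(\xi)$ and Cauchy--Schwarz yield $|\widehat{\dv\bu}|^2\le|\xi|^2|\hat{\bu}|^2$ pointwise, and then extend to the whole space $W^{1,2}_0$ by density. (An alternative route is a direct integration-by-parts identity $\int(\dv\bu)^2 = \int \partial_i u_j\,\partial_j u_i$ on test functions, followed by Cauchy--Schwarz.)

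Combining the two cases, the constant $c=\min\{\mu,\lambda+2\mu\}>0$ works universally. I do not anticipate any real obstacle: the only nontrivial ingredient is the bound $\|\dv\bu\|_{L^2}\le\|\nabla\bu\|_{L^2}$, and this is a soft consequence of the Dirichlet boundary condition together with Plancherel.
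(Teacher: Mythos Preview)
The paper does not supply a proof of this lemma; it is simply listed among the auxiliary results in Section~\ref{s:3} as a standard fact. Your argument is correct and is indeed the standard one: you (rightly) read the statement with all norms squared---the version printed in the paper has an obvious typo, since the first term on the left is quadratic while the other two are written linearly---and then split on the sign of $\lambda+\mu$, using the elementary bound $\|\dv\bu\|_{L^2}^2\le\|\nabla\bu\|_{L^2}^2$ for $\bu\in W^{1,2}_0(\Omega,\R^3)$ to absorb the negative term when $\lambda+\mu<0$. Both justifications you offer for that bound (Plancherel after zero extension, or the integration-by-parts identity $\int(\dv\bu)^2=\int\partial_i u_j\,\partial_j u_i$ followed by Cauchy--Schwarz) are valid, and the resulting constant $c=\min\{\mu,\lambda+2\mu\}$ is sharp for this argument.
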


\begin{lm}\label{strongcvsob}
Let $ \Omega$ be a bounded Lipschitz domain in $\R^3$ . If $g_n \to g $ in $C_w([0,T];L^q(\Omega))$,  $1 < q < \infty$ then $g_n \to g$ strongly in $ L^p(0,T;W^{-1,r}(\Omega)) $ provided $L^q(\Omega) \hookrightarrow \hookrightarrow W^{-1,r}(\Omega)$.
\end{lm}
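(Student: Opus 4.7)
The plan is to reduce the conclusion to a pointwise-in-time strong convergence statement in $W^{-1,r}(\Omega)$, and then lift it to $L^p(0,T;W^{-1,r}(\Omega))$ via Lebesgue's dominated convergence theorem.

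First, I would unpack what convergence in $C_w([0,T];L^q(\Omega))$ gives us. By definition, for every $\varphi\in L^{q'}(\Omega)$ and every $t\in[0,T]$ one has $\langle g_n(t),\varphi\rangle\to\langle g(t),\varphi\rangle$; in particular, $g_n(t)\rightharpoonup g(t)$ weakly in $L^q(\Omega)$ for each individual $t$. The Banach--Steinhaus theorem applied on each $g_n(t)$ yields a uniform bound
\[
M:=\sup_{n\in\N}\sup_{t\in[0,T]} \|g_n(t)\|_{L^q(\Omega)}<\infty.
\]

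Second, I would promote, pointwise in $t$, the weak $L^q$ convergence to strong $W^{-1,r}$ convergence using the compact embedding assumption. Fix $t\in[0,T]$. The sequence $\{g_n(t)\}_n$ is bounded in $L^q(\Omega)$, hence relatively compact in $W^{-1,r}(\Omega)$ by the hypothesis $L^q(\Omega)\hookrightarrow\hookrightarrow W^{-1,r}(\Omega)$. Any subsequence therefore has a further subsequence converging strongly in $W^{-1,r}(\Omega)$; the limit must coincide with $g(t)$ because strong convergence in $W^{-1,r}$ implies weak convergence and the weak limit in $L^q$ is unique. A standard subsequence-of-subsequence argument gives $\|g_n(t)-g(t)\|_{W^{-1,r}(\Omega)}\to 0$ for every $t\in[0,T]$.

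Finally, I would conclude by Lebesgue dominated convergence. The compact embedding implies, in particular, continuity $L^q(\Omega)\hookrightarrow W^{-1,r}(\Omega)$, so the uniform bound from Step~1 produces a constant $C=C(M)$ such that
\[
\|g_n(t)-g(t)\|_{W^{-1,r}(\Omega)}^p\le C \qquad\text{for all }n\in\N,\ t\in[0,T].
\]
Combined with the pointwise (hence a.e.) convergence of the integrand to $0$ established in Step~2, dominated convergence yields $\int_0^T\|g_n(t)-g(t)\|_{W^{-1,r}(\Omega)}^p\,\dt\to 0$, which is the claim. The proof is essentially classical and contains no serious obstacle; the only point deserving care is the subsequence extraction argument in Step~2, which requires exploiting the uniqueness of the weak $L^q$-limit in combination with compactness.
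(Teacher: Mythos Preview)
Your argument is correct. The paper does not supply a proof of this lemma; it is listed in Section~\ref{s:3} among ``auxiliary results'' that are merely recalled as standard facts, so there is nothing to compare against and your write-up fills that gap adequately. Two minor remarks: in Step~1 the uniform-in-$(n,t)$ bound follows from Banach--Steinhaus applied to the whole family $\{g_n(t):n\in\N,\ t\in[0,T]\}$ (not ``on each $g_n(t)$''), using that uniform weak convergence on $[0,T]$ plus weak continuity of $g$ makes the family pointwise bounded on $L^{q'}$; and in Step~2 the identification of the $W^{-1,r}$-limit with $g(t)$ uses that the continuous embedding $L^q\hookrightarrow W^{-1,r}$ dualizes to $W^{1,r'}_0\hookrightarrow L^{q'}$, so the common test functions are $W^{1,r'}_0$.
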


Note that $L^q(\Omega) \hookrightarrow \hookrightarrow W^{-1,r}(\Omega)$ holds for $\Omega$ a Lipschitz domain in $\R^3$ for $1\leq r\leq \frac 32$ if $q>1$ arbitrary or for $\frac 32 <r<\infty$ provided $q> \frac{3r}{3+r}$.

\begin{lm}\label{boundcontw}
Let $1 \le q< \infty$. Let the sequence $ g_n \in C_{w}([0,T],L^q(\Omega))$ be bounded in $L^\infty(0,T;L^q(\Omega))$. Then it is uniformly bounded on $[0,T]$. More precisely, we have
\begin{equation}
\operatorname{ess \ sup}_{t \in (0,T)} \| g_n(t) \|_{L^q(\Omega)} \le C \Rightarrow \sup_{t \in [0,T]} \| g_n(t) \|_{L^q(\Omega)} \le C,
\end{equation}
where $c$ is a positive constant independent of $n$.
\end{lm}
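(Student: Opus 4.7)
The plan is to combine the essential-supremum bound with weak sequential lower semicontinuity of the $L^q(\Omega)$ norm, exploiting the fact that weak continuity gives a well-defined pointwise representative at every $t \in [0,T]$.

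Fix $n$ and an arbitrary $t_0 \in [0,T]$, and set
\[
N_n := \{ t \in (0,T) : \|g_n(t)\|_{L^q(\Omega)} > C \}.
\]
By hypothesis $N_n$ has Lebesgue measure zero, hence $(0,T) \setminus N_n$ is dense in $[0,T]$; pick a sequence $t_k \in (0,T) \setminus N_n$ with $t_k \to t_0$. Since $g_n \in C_w([0,T]; L^q(\Omega))$, the weak convergence $g_n(t_k) \rightharpoonup g_n(t_0)$ in $L^q(\Omega)$ holds by the very definition of $C_w$.

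Now invoke the duality representation
\[
\|f\|_{L^q(\Omega)} = \sup \Big\{ \int_\Omega f h \, \dx \ :\ h \in L^{q'}(\Omega),\ \|h\|_{L^{q'}(\Omega)} \le 1 \Big\},
\]
valid for every $1 \le q < \infty$ (with $q'$ the conjugate exponent). For each admissible test function $h$,
\[
\int_\Omega g_n(t_0) h \, \dx = \lim_{k\to\infty} \int_\Omega g_n(t_k) h \, \dx \le \liminf_{k\to\infty} \|g_n(t_k)\|_{L^q(\Omega)} \le C,
\]
and taking the supremum over such $h$ yields $\|g_n(t_0)\|_{L^q(\Omega)} \le C$. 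Since $t_0 \in [0,T]$ was arbitrary and $C$ is independent of $n$, this proves the claim.

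There is no serious analytic obstacle: the statement is essentially a compatibility result between weak continuity and an almost-everywhere pointwise norm bound. The only point worth flagging is the case $q=1$, where $L^1$ is not reflexive and one cannot rely on weak compactness of closed balls; however, weak lower semicontinuity of the $L^q$ norm still follows from the duality formula above (with $q'=\infty$), so the entire range $1 \le q < \infty$ can be treated uniformly by the same argument.
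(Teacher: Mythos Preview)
Your proof is correct. The paper itself states this lemma as a standard auxiliary result in Section~\ref{s:3} without proof, so there is no original argument to compare against; your approach via density of the full-measure set and weak lower semicontinuity of the $L^q$ norm is the natural one, and your remark on the $q=1$ case is accurate.
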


\begin{lm}\label{compcont}
Let $1<p,q<\infty$ and let $ \Omega$ be a bounded Lipschitz domain in $ \R^3$. Let $ \{g_n\}_{n \in \mathbb{N}}$ be a sequence of functions defined on $ [0,T]$ with values in $L^q(\Omega)$ such that
\begin{equation}\label{l3_a}
g_n \in C_{w}([0,T],L^q(\Omega)),~ g_n~ \text{is uniformly continuous in} ~W^{-1,p}(\Omega)~ \text{and uniformly bounded in}~L^q(\Omega).
\end{equation}
Then, at least for a chosen subsequence
\begin{equation} \label{l3_b}
g_n \to g~ \text{in}~ C_{w}([0,T],L^q(\Omega)).
\end{equation}
If, moreover, $L^q(\Omega) \hookrightarrow \hookrightarrow W^{-1,p}(\Omega)$, then
\begin{equation} \label{l3_c}
g_n \to g \qquad \text{in } C([0,T]; W^{-1,p}(\Omega)).
\end{equation}
\end{lm}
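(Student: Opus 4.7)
The plan is to prove the two conclusions in turn. For \eqref{l3_b}, I would use a scalar Arzel\`a--Ascoli argument applied to pairings $\langle g_n(t),\varphi\rangle$ against a countable family of test functions; for \eqref{l3_c}, I would then upgrade the resulting pointwise-in-$t$ weak convergence in $L^q(\Omega)$ to strong convergence in $W^{-1,p}(\Omega)$ uniformly in $t$ by combining the compact embedding with the assumed uniform continuity.

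\textbf{Step 1 (preparation of test functions).} Fix a countable set $\{\varphi_k\}\subset C_c^\infty(\Omega)$ which is dense in $L^{q'}(\Omega)$; since $C_c^\infty(\Omega)\subset W^{1,p'}_0(\Omega)$, each $\varphi_k$ is an admissible test element for the $\langle\cdot,\cdot\rangle_{W^{-1,p},W^{1,p'}_0}$ duality. Define $f_{n,k}(t):=\langle g_n(t),\varphi_k\rangle$. The assumption that $g_n$ is uniformly bounded in $L^\infty(0,T;L^q(\Omega))$ gives $|f_{n,k}(t)|\le C\|\varphi_k\|_{L^{q'}}$, while the uniform continuity of $g_n$ in $W^{-1,p}(\Omega)$ yields
\[
|f_{n,k}(t)-f_{n,k}(s)|\le \|g_n(t)-g_n(s)\|_{W^{-1,p}(\Omega)}\,\|\varphi_k\|_{W^{1,p'}_0(\Omega)}\longrightarrow 0
\]
uniformly in $n$ as $|t-s|\to 0$. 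Hence each family $\{f_{n,k}\}_n$ is bounded and equicontinuous on $[0,T]$, so by Arzel\`a--Ascoli and a diagonal extraction I obtain a subsequence (not relabelled) along which $f_{n,k}\to F_k$ in $C([0,T])$ for every $k$.

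\textbf{Step 2 (identification of the limit).} For arbitrary $\varphi\in L^{q'}(\Omega)$, the uniform $L^q$-bound together with density of $\{\varphi_k\}$ allows one to pass to the limit in $\langle g_n(t),\varphi\rangle$ uniformly in $t$; the limit $F(t,\varphi)$ depends linearly and continuously on $\varphi\in L^{q'}(\Omega)$ for each $t$, so by Riesz representation there exists $g(t)\in L^q(\Omega)$ with $F(t,\varphi)=\langle g(t),\varphi\rangle$. This is exactly the statement that $g_n\to g$ in $C_w([0,T];L^q(\Omega))$, yielding \eqref{l3_b}. Note that Lemma~\ref{boundcontw} (or Fatou-type lower semicontinuity) also ensures $g\in L^\infty(0,T;L^q(\Omega))$.

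\textbf{Step 3 (upgrade to $C([0,T];W^{-1,p})$).} Now assume $L^q(\Omega)\hookrightarrow\hookrightarrow W^{-1,p}(\Omega)$. For each fixed $t$, $g_n(t)\rightharpoonup g(t)$ in $L^q$ and the sequence is $L^q$-bounded, hence the compact embedding gives $g_n(t)\to g(t)$ strongly in $W^{-1,p}(\Omega)$. To make this uniform in $t$, argue by contradiction: if not, there exist $\varepsilon>0$, $n_k\to\infty$ and $t_{n_k}\in[0,T]$ with $\|g_{n_k}(t_{n_k})-g(t_{n_k})\|_{W^{-1,p}}>\varepsilon$. Extract $t_{n_k}\to t_\ast$, and bound
\[
\|g_{n_k}(t_{n_k})-g(t_{n_k})\|_{W^{-1,p}}
\le \|g_{n_k}(t_{n_k})-g_{n_k}(t_\ast)\|_{W^{-1,p}}
+\|g_{n_k}(t_\ast)-g(t_\ast)\|_{W^{-1,p}}
+\|g(t_\ast)-g(t_{n_k})\|_{W^{-1,p}}.
\]
The first term vanishes by equicontinuity in $W^{-1,p}$, the second by the pointwise convergence just established, and the third because $g\in C_w([0,T];L^q)$ together with the compact embedding forces $g(t_{n_k})\to g(t_\ast)$ strongly in $W^{-1,p}$. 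This contradiction yields \eqref{l3_c}.

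\textbf{Main obstacle.} The delicate point is Step~1--2: one must pass from the $W^{-1,p}$-equicontinuity, which only controls dualities against smooth test functions, to a uniform-in-$t$ weak convergence in $L^q$. The bridge is precisely the density of $C_c^\infty(\Omega)\subset W^{1,p'}_0(\Omega)$ in $L^{q'}(\Omega)$ combined with the $L^\infty(0,T;L^q)$ bound, which is what allows the scalar Arzel\`a--Ascoli argument to be propagated to the whole dual space. Step~3 is then essentially routine once the compact embedding is available.
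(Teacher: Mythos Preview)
The paper does not supply a proof of this lemma; it is listed among the recalled auxiliary results in Section~\ref{s:3} and simply stated as a known fact. Your argument is correct and is precisely the standard route one would take: Arzel\`a--Ascoli on the scalar pairings against a countable dense family of smooth test functions, diagonal extraction, propagation to all of $L^{q'}(\Omega)$ via density plus the uniform $L^q$-bound, and then the contradiction argument with the compact embedding to upgrade to uniform-in-$t$ strong $W^{-1,p}$ convergence. There is nothing to compare against, and your proof could serve as the one the paper omits.
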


Next, let us consider weak solutions to the continuity equation
\begin{equation} \label{CE3}
\partial_t Z + \dv{(Z\bu)} = 0,\quad Z(0,\cdot) = Z_0(\cdot).
\end{equation}
As a result of the DiPerna--Lions \cite{diperna1989ordinary} theory we have
\begin{lm} \label{ReSolLDP}
Assume $Z \in L^q((0,T)\times\Omega)$ and $\bu \in L^2(0,T;W^{1,2}_0(\Omega))$, where $\Omega \subset \R^3$ is a domain with Lipschitz boundary. Let $(Z,\bu)$ be a weak solution to (\ref{CE3}) and $q\geq 2$. Then $(Z,\bu)$ is also a renormalized solution to (\ref{CE3}), i.e. it solves (\ref{renorent}) in the sense of distributions on $(0,T)\times \R^3$ provided $Z$, $\bu$ are extended by zero outside of $\Omega$.
\end{lm}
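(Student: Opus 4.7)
The plan is to follow the classical DiPerna--Lions argument \cite{diperna1989ordinary} based on regularization by a spatial mollifier. First I would note that, since $\bu \in L^2(0,T;W^{1,2}_0(\Omega))$ admits a zero extension to $\R^3$ (still in $L^2(0,T;W^{1,2}(\R^3))$) and $Z$ is extended by zero outside $\Omega$, the weak formulation of \eqref{CE3} on $\Omega$ immediately yields
$$
\partial_t Z + \dv(Z\bu) = 0 \quad \text{in } \mathcal{D}'((0,T)\times \R^3),
$$
because the extensions carry no boundary contribution (test functions supported across $\partial\Omega$ are absorbed by the vanishing trace of $\bu$). This reduces the problem to a statement on the whole space, where translation-invariant regularization is available.

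Next I would introduce a standard nonnegative radial mollifier $\omega_\varepsilon(x) = \varepsilon^{-3}\omega(x/\varepsilon)$ in the spatial variable and set $Z_\varepsilon := Z *_x \omega_\varepsilon$. Convolving \eqref{CE3} yields the pointwise (in time, smooth in space) identity
$$
\partial_t Z_\varepsilon + \dv(Z_\varepsilon \bu) = r_\varepsilon, \qquad r_\varepsilon := \dv(Z_\varepsilon \bu) - \dv(Z\bu)*_x \omega_\varepsilon .
$$
The heart of the matter is the commutator lemma: under the integrability assumptions $Z \in L^q$, $\nabla \bu \in L^2$ with $\tfrac{1}{q}+\tfrac{1}{2}\le 1$, one has $r_\varepsilon \to 0$ in $L^1_{\loc}((0,T)\times\R^3)$. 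This is exactly where the hypothesis $q \ge 2$ is used: by H\"older the product $Z\,\nabla\bu$ lies in $L^s$ with $\tfrac{1}{s}=\tfrac{1}{q}+\tfrac{1}{2}\le 1$, which is the minimal requirement to run the classical commutator estimate. I expect this to be the main technical step, but it is standard and I would quote it.

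Since $Z_\varepsilon$ is smooth in $x$, for any $b \in C^1(\R)$ with $b'$ compactly supported we can multiply the regularized equation by $b'(Z_\varepsilon)$ and use the chain rule to obtain
$$
\partial_t b(Z_\varepsilon) + \dv\bigl(b(Z_\varepsilon)\bu\bigr) + \bigl(b'(Z_\varepsilon)Z_\varepsilon - b(Z_\varepsilon)\bigr)\dv\bu = b'(Z_\varepsilon)\, r_\varepsilon
$$
in $\mathcal{D}'((0,T)\times\R^3)$. Finally I would pass to the limit $\varepsilon \to 0^+$: by properties of mollification $Z_\varepsilon \to Z$ in $L^q_{\loc}$ (hence a.e.\ along a subsequence), and since $b$, $b'$, and $z\mapsto b'(z)z-b(z)$ are bounded and continuous, dominated convergence gives $b(Z_\varepsilon)\to b(Z)$, $b(Z_\varepsilon)\bu \to b(Z)\bu$, and $(b'(Z_\varepsilon)Z_\varepsilon-b(Z_\varepsilon))\dv\bu \to (b'(Z)Z-b(Z))\dv\bu$ in suitable $L^p_{\loc}$ spaces. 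The right-hand side vanishes in $L^1_{\loc}$ because $b'$ is bounded and $r_\varepsilon \to 0$ in $L^1_{\loc}$. This yields \eqref{renorent}, completing the proof. The only real obstacle is the commutator estimate, which relies crucially on the Sobolev regularity of $\bu$ and on the balance $\tfrac{1}{q}+\tfrac{1}{2}\le 1$.
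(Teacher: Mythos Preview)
Your proposal is correct and follows precisely the classical DiPerna--Lions mollification/commutator argument. The paper does not actually supply a proof of this lemma: it is stated in Section~\ref{s:3} as an auxiliary result and attributed directly to \cite{diperna1989ordinary}, so your sketch is exactly the argument the paper is invoking by citation.
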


\begin{rmq} \label{ReSolExtb}
By density argument and standard approximation technique, we may extend the validity of (\ref{renorent}) to functions $b \in C([0,\infty) \cap C^1(0,\infty))$ such that
\[
|b'(t)| \leq C t^{-\lambda_0}, \qquad \lambda_0 <-1, \quad t \in (0,1],
\]
\[
|b'(t)| \leq C t^{\lambda_1}, \qquad -1<\lambda_1 \leq \frac{q}{2}-1, \quad t \geq 1.
\]
\end{rmq}

\begin{lm} \label{ReSolInitCond}
    Let
	\[
		(s,\mathbf{u})\in \Big(L^{\infty}((0,T)\times \Omega)\cap
		C_{w}([0,T];L^q(\Omega))\Big)\times
		L^2(0,T;W^{1,2}(\Omega,\R^3)) \]
	be a weak solution
	to (\ref{ent_trans}) with $s(0,\cdot)=s_0 \in L^{\infty}(\Omega)$.
	Then for every $B\in C(\R)$, $(B(s),\bu)$ is a distributional solution to
	(\ref{ent_trans}), i.e.
	$$
	\partial_t B(s) + \bu\cdot\nabla B(s) =0
	$$
	in ${\cal D}'((0,T)\times \Omega)$. Moreover, $s$ and $B(s) \in C([0,T]; L^r(\Omega))$ for all $r<\infty$ and
	 $B(s)(0,\cdot)=B(s_0)$.
\end{lm}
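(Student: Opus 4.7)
The plan is to split the argument into two stages: first establish the renormalized identity $\partial_t B(s)+\bu\cdot\nabla B(s)=0$ in ${\cal D}'((0,T)\times\Omega)$, and then use it with the particular choice $B(z)=z^2$ to upgrade weak-in-time continuity of $s$ to strong continuity in $L^r(\Omega)$ for every $r<\infty$, from which the trace identification at $t=0$ will follow.

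For the renormalized identity I would follow the classical DiPerna--Lions mollification. Localizing so that $s$ and $\bu$ can be handled near the boundary via cut-offs, I convolve with a standard spatial mollifier $\omega_\varepsilon$, setting $s_\varepsilon=\omega_\varepsilon*_x s$. Rewriting the transport equation in conservative form $\partial_t s+\dv(s\bu)-s\,\dv\bu=0$ and convolving gives
\begin{equation*}
\partial_t s_\varepsilon + \bu\cdot\nabla s_\varepsilon = r_\varepsilon,
\end{equation*}
where $r_\varepsilon$ is the DiPerna--Lions commutator. Since $s\in L^\infty$ and $\bu\in L^2(0,T;W^{1,2})$, the classical commutator lemma yields $r_\varepsilon\to 0$ in $L^1_{\loc}$. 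Multiplying by $B'(s_\varepsilon)$ for $B\in C^1(\R)$ produces $\partial_t B(s_\varepsilon)+\bu\cdot\nabla B(s_\varepsilon)=B'(s_\varepsilon)r_\varepsilon$, and letting $\varepsilon\to 0$ delivers the renormalized identity for such $B$. Since $s\in L^\infty$, any continuous $B$ is uniformly approximated on the essential range of $s$ by smooth functions, and a final limit extends the identity to all $B\in C(\R)$.

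For the time regularity I apply the identity with a smooth bounded approximation of $B(z)=z^2$. Using the associated weak formulation, which as in (\ref{weak_zeta}) admits test functions from $C^1(\QTb)$ and hence the choice $\varphi\equiv 1$, I obtain
\begin{equation*}
\int_\Omega s^2(t)\,\dx - \int_\Omega s_0^2\,\dx = \int_0^t\!\!\int_\Omega s^2\,\dv\bu\,\dx\,\dtau,
\end{equation*}
so that $t\mapsto\|s(t)\|_{L^2}^2$ is absolutely continuous. The hypothesis $s\in L^\infty\cap C_w([0,T];L^q(\Omega))$ upgrades, via the uniform $L^\infty$ bound, to $s\in C_w([0,T];L^2(\Omega))$, and convergence of norms then promotes this to strong continuity in $L^2$. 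Interpolation against the $L^\infty$ bound gives $s\in C([0,T];L^r(\Omega))$ for every $r<\infty$. Uniform continuity of $B$ on the range of $s$ transfers this to $B(s)$, and $B(s)(0,\cdot)=B(s_0)$ then follows from $s(0,\cdot)=s_0$ in $L^r$ together with a dominated convergence argument along an a.e.\ convergent subsequence.

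The main obstacle is the commutator analysis in a domain where $\bu$ is only in $W^{1,2}(\Omega)$ (no zero trace is assumed). I would circumvent this by first proving the distributional identity locally, working with test functions supported in the interior, and only then invoking the weak formulation of the transport equation with its boundary-aware term $s\,\dv(\bu\varphi)$ to justify the constant-in-space test function needed for the energy-type identity. A secondary technical point is the extension from $B\in C^1$ to $B\in C(\R)$, which is controlled exclusively by the uniform $L^\infty$ bound on $s$.
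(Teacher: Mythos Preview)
The paper does not prove this lemma; it is listed in Section~\ref{s:3} among auxiliary results that are ``mostly standard'' and stated without proof. There is therefore no paper argument to compare against.

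Your approach is the standard one and is correct in outline: DiPerna--Lions mollification yields the renormalized identity for $B\in C^1$ (the commutator estimate needs only $s\in L^\infty$ and $\nabla\bu\in L^2$), the passage to $B\in C(\R)$ is controlled entirely by the uniform $L^\infty$ bound on $s$, and the $B(z)=z^2$ trick upgrades weak to strong time continuity via convergence of norms, after which interpolation against $L^\infty$ gives $C([0,T];L^r)$ for all $r<\infty$.

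The obstacle you flag is genuine. Since $\bu$ is only assumed in $W^{1,2}(\Omega)$ (no zero trace), mollification gives the distributional identity only against compactly supported test functions, whereas the energy step requires $\varphi\equiv 1$. Your workaround---prove the identity locally, then appeal to the up-to-boundary weak formulation with the $s\,\dv(\bu\varphi)$ term---is the right idea but is not fully justified as written: passing from the interior distributional identity for $B(s)$ to the boundary-aware formulation is precisely where a condition on $\bu\cdot\bn|_{\partial\Omega}$ (or $\bu\in W^{1,2}_0$) would normally enter. In all applications within the paper one has $\bu\in W^{1,2}_0$, so the difficulty evaporates; but for the lemma exactly as stated this step deserves either an explicit additional hypothesis or a more careful argument showing that the original weak formulation for $s$ already transfers to $B(s)$ up to the boundary.
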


In some situations when the DiPerna--Lions theory is not applicable, i.e. when $q<2$ in Lemma \ref{ReSolLDP}, we can still prove that the solution is in fact a renormalized one using the approach from \cite{FeCMUC}. To this purpose  one has to consider the oscillation defect measure of the sequence $Z_\delta$ approximating $Z$, i.e.
\begin{equation} \label{ODF}
{\rm osc}_{q}(Z_\delta-Z)=\sup_{k \in \N} \limsup_{\delta \to 0^+} \|T_k(Z_\delta)-T_k(Z)\|_{L^q((0,T)\times \Omega)},
\end{equation}
where
\begin{equation} \label{TX}
T_k(z) = k T\Big(\frac{z}{k}\Big), \quad z \in \R,\quad k \ge 1,
\end{equation}
with $T \in C^\infty (\R) $ such that
\begin{equation} \label{TkX}
T(z) = z~\text{for}~z\le 1, \quad T(z) =2~\text{for}~z \ge 3,\quad T~\text{concave, non-decreasing}.
\end{equation}
We have
\begin{lm} \label{ReSolFe}
Let $\Omega \subset \R^3$ a domain with Lipschitz boundary. Assume that $(Z_\delta,\bu_\delta)$ is a sequence of renormalized solutions to the continuity equation such that
\[
Z_\delta \to Z \qquad \text{ weakly in } L^1((0,T)\times \Omega),
\]
\[
\bu_\delta \to \bu \qquad \text {weakly in } L^2(0,T; W^{1,2}_0 (\Omega,\R^3))
\]
such that ${\rm osc}_{q}(Z_\delta-Z)<\infty$ for some $q>2$. Then $(Z,\bu)$ is a renormalized solution to the continuity equation.
\end{lm}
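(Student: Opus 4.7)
We adapt the scheme of \cite{FeCMUC} based on a two-level truncation. At the inner level we fix $k\ge 1$, use the renormalization hypothesis for each $(Z_\delta,\bu_\delta)$, and pass $\delta\to 0^+$. At the outer level we let $k\to\infty$, exploiting the finiteness of the oscillation defect measure to annihilate the defect that appears in the weak limit of the product $(T_k'(Z)Z-T_k(Z))\dv\bu$.

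\textbf{Step 1 (Inner passage, $\delta\to 0^+$ at fixed $k$).} Apply the renormalization of $(Z_\delta,\bu_\delta)$ with $b=T_k$ from \eqref{TX}--\eqref{TkX}:
\[
\partial_t T_k(Z_\delta) + \dv\bigl(T_k(Z_\delta)\bu_\delta\bigr) + \bigl(T_k'(Z_\delta)Z_\delta - T_k(Z_\delta)\bigr)\dv\bu_\delta = 0 \quad \text{in } \D'((0,T)\times\R^3).
\]
The sequences $T_k(Z_\delta)$ and $T_k'(Z_\delta)Z_\delta - T_k(Z_\delta)$ are bounded in $L^\infty$ by $Ck$, so (up to subsequences) they converge weakly-$*$ to limits denoted $\overline{T_k(Z)}$ and $\overline{T_k'(Z)Z - T_k(Z)}$. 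Combined with the weak $L^2$ convergence of $\dv\bu_\delta$, this yields
\[
\partial_t \overline{T_k(Z)} + \dv\bigl(\overline{T_k(Z)}\bu\bigr) + \overline{\bigl(T_k'(Z)Z - T_k(Z)\bigr)\dv\bu} = 0.
\]

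\textbf{Step 2 (Renormalization of the $k$-equation).} Since $\overline{T_k(Z)}\in L^\infty$ and $\bu\in L^2(0,T;W^{1,2}_0(\Omega,\R^3))$, Lemma~\ref{ReSolLDP} (applied after standard mollification to the transport equation with source $-\overline{(T_k'(Z)Z - T_k(Z))\dv\bu}$) gives, for every smooth renormalizer $h$ with $h'$ of compact support,
\[
\partial_t h(\overline{T_k(Z)}) + \dv\bigl(h(\overline{T_k(Z)})\bu\bigr) + \bigl(h'(\overline{T_k(Z)})\overline{T_k(Z)} - h(\overline{T_k(Z)})\bigr)\dv\bu = -h'(\overline{T_k(Z)})\,\overline{\bigl(T_k'(Z)Z - T_k(Z)\bigr)\dv\bu}.
\]

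\textbf{Step 3 (Outer passage, $k\to\infty$).} The pointwise estimate $|T_k'(z)z - T_k(z)| \le 2T_k(z)\chi_{\{z>k\}}$ and the bound $k\chi_{\{Z_\delta>k\}} \le T_k(Z_\delta)$, together with $\|T_k(Z_\delta)\|_{L^q}$ being controlled via $\mathrm{osc}_q(Z_\delta - Z)$ and $\|T_k(Z)\|_{L^q}$, imply (for an appropriate $a\in(2,q)$)
\[
\|T_k'(Z_\delta)Z_\delta - T_k(Z_\delta)\|_{L^a((0,T)\times\Omega)} \to 0 \quad\text{as }k\to\infty,
\]
uniformly in $\delta$. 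Paired by H\"older against $\dv\bu_\delta\in L^2$ and using weak lower semicontinuity, this forces the defect on the right-hand side of Step~2 to tend to $0$ in $L^r$ with $r=\tfrac{2a}{a+2}>1$. In parallel, the oscillation bound $\|\overline{T_k(Z)} - T_k(Z)\|_{L^q}\le \mathrm{osc}_q$ together with the pointwise convergence $T_k(Z)\to Z$ yields $\overline{T_k(Z)}\to Z$ in $L^1_{\mathrm{loc}}$, so $h(\overline{T_k(Z)})\to h(Z)$ strongly. Passing to the limit in Step~2 we obtain
\[
\partial_t h(Z) + \dv(h(Z)\bu) + \bigl(h'(Z)Z - h(Z)\bigr)\dv\bu = 0 \quad\text{in } \D'((0,T)\times\R^3),
\]
which is the renormalized continuity equation of Definition~\ref{renor_cont}; the admissible class of $h$ is then extended by a standard density argument.

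\textbf{Main obstacle.} The heart of the proof lies in the quantitative vanishing of the defect in the double limit. This is exactly where the assumption $q>2$ is decisive: only then can one pick a H\"older exponent $a\in(2,q)$ that makes the decay of $T_k'(Z_\delta)Z_\delta - T_k(Z_\delta)$ in $L^a$ genuine and simultaneously compatible with the $L^2$ integrability of $\dv\bu$. For $q\le 2$ this H\"older balance collapses and the weak limit of the defective product cannot be shown to vanish.
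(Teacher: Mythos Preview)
The paper does not supply its own proof of this lemma; it is quoted in Section~\ref{s:3} as an auxiliary result from \cite{FeCMUC}. Your outline follows Feireisl's two-level truncation scheme and the first two steps are correct. There is, however, a genuine gap in Step~3.

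You assert that $\|T_k'(Z_\delta)Z_\delta - T_k(Z_\delta)\|_{L^a((0,T)\times\Omega)} \to 0$ uniformly in $\delta$ for some $a\in(2,q)$, invoking that ``$\|T_k(Z_\delta)\|_{L^q}$ is controlled via $\mathrm{osc}_q(Z_\delta - Z)$ and $\|T_k(Z)\|_{L^q}$''. But under the hypotheses $Z$ is only in $L^1$, so $\|T_k(Z)\|_{L^q}$ is \emph{not} bounded in $k$; in fact the crude bound $|T_k'(z)z-T_k(z)|\le 2k\,\chi_{\{z\ge k\}}$ together with $|\{Z_\delta\ge k\}|\le \|Z_\delta\|_{L^1}/k$ gives $\|T_k'(Z_\delta)Z_\delta-T_k(Z_\delta)\|_{L^a}\lesssim k^{(a-1)/a}$, which diverges. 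Hence your interpolation does not close, and the claim that the full defect $\overline{(T_k'(Z)Z-T_k(Z))\dv\bu}$ vanishes in $L^r$ is unjustified.

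The missing ingredient is the \emph{localisation} provided by the compact support of $h'$. If $\mathrm{supp}\,h'\subset[-M,M]$, the right-hand side of Step~2 lives on $Q_{k,M}:=\{|\overline{T_k(Z)}|\le M\}$. On that set, for $k>M$,
\[
|T_k'(Z_\delta)Z_\delta - T_k(Z_\delta)|\,\chi_{Q_{k,M}} \le 2\bigl|T_k(Z_\delta)-\overline{T_k(Z)}\bigr| + 2M\,\chi_{\{Z_\delta\ge k\}}.
\]
The first term is bounded in $L^q$ by $2\|T_k(Z_\delta)-T_k(Z)\|_{L^q}+2\|T_k(Z)-\overline{T_k(Z)}\|_{L^q}$, both controlled by $\mathrm{osc}_q$ (the second via weak lower semicontinuity); the second term has $L^q$ norm $\lesssim M k^{-1/q}\to 0$. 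Interpolating this uniform $L^q$ bound against the $L^1$ decay (which follows from Dunford--Pettis uniform integrability of $Z_\delta$) now gives $\|(T_k'(Z_\delta)Z_\delta-T_k(Z_\delta))\chi_{Q_{k,M}}\|_{L^2}\to 0$ uniformly in $\delta$, and the defect paired with $\dv\bu_\delta$ vanishes. This is precisely where the assumption $q>2$ is used, and it is the interaction between the support of $h'$ and the oscillation bound that your write-up omits.
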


We further need the following well-known result \cite{NoSt,DM} concerning the solution operator to the problem
\begin{equation} \label{Bog}
\begin{array}{c}
\dv \bm{v} = f, \\
\bm{v}|_{\partial \Omega}= \bm{0}.
\end{array}
\end{equation}

\begin{lm} \label{Bogovskii}
Let $\Omega$ be a Lipschitz domain in $\R^3$.
For any $1<p<\infty$ there exists a solution operator ${\cal B}\colon \{f \in L^p(\Omega); \int_\Omega f \, \dx= 0\}$ $\to$ $W^{1,p}_0(\Omega,\R^3)$ to (\ref{Bog}) such that for $ \bm{v} = {\cal B}f$ it holds
\[
\|\bm{v}\|_{W^{1,p}(\Omega)} \leq C(\Omega, p)\|f\|_{L^p(\Omega)}.
\]
\end{lm}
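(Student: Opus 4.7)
The plan is to reduce the problem on a general Lipschitz domain to a finite union of domains which are star-shaped with respect to balls, then construct an explicit integral operator on each such piece and glue the local operators together. The key input is Calder\'on--Zygmund theory for singular integrals.

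First I would invoke the structural fact that any bounded Lipschitz domain $\Omega$ admits a finite open cover $\Omega=\bigcup_{j=1}^{N}\Omega_j$ such that each $\Omega_j$ is star-shaped with respect to some open ball $B_j\subset\subset\Omega_j$. Choose a partition of unity $\{\chi_j\}$ subordinate to this cover and, given $f\in L^p(\Omega)$ with zero mean, split $f=\sum_j f_j$ where $f_j:=\chi_j f + g_j$, with correction functions $g_j$ supported in $B_j\cap B_{j+1}$ (say) and chosen so that each $f_j$ has zero mean on $\Omega_j$ while still $\sum_j g_j=0$. A routine induction on $j$ produces such $g_j$ satisfying $\|g_j\|_{L^p}\lesssim \|f\|_{L^p}$. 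It then suffices to solve $\dv\mathbf{v}_j=f_j$ in $\Omega_j$ with $\mathbf{v}_j$ vanishing on $\partial\Omega_j$, and set $\mathcal{B}f:=\sum_j \mathbf{v}_j$ (extending each $\mathbf{v}_j$ by zero).

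For the local problem on a domain $D$ star-shaped with respect to a ball $B$, I would write down Bogovskii's explicit formula: fix a non-negative $\omega\in C_c^\infty(B)$ with $\int\omega=1$ and define
\[
\mathbf{v}(x)=\int_{D} f(y)\,\mathbf{N}(x,y)\,\dx,\qquad \mathbf{N}(x,y):=\frac{x-y}{|x-y|^3}\int_{|x-y|}^{\infty} \omega\!\left(y+r\tfrac{x-y}{|x-y|}\right) r^{2}\,\mathrm{d}r.
\]
A direct computation (using $\int_D f=0$ and the star-shapedness to justify the integrations by parts) shows $\dv\mathbf{v}=f$ in $D$ and $\mathbf{v}=0$ on $\partial D$. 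The main analytic work is to establish the estimate $\|\nabla\mathbf{v}\|_{L^p(D)}\le C\|f\|_{L^p(D)}$ for $1<p<\infty$; this is where I expect the bulk of the effort, and it is the main obstacle. The gradient $\partial_i v_k$ can, after a careful computation, be split into a singular integral operator whose kernel is homogeneous of degree $-3$ with vanishing mean on the unit sphere, plus a kernel that is only weakly singular. The former is handled by the classical Calder\'on--Zygmund theorem on $\R^3$, the latter by Young's convolution inequality.

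Summing the local contributions and using boundedness of the correction maps $f\mapsto g_j$ in $L^p$, one concludes $\|\mathcal{B}f\|_{W^{1,p}(\Omega)}\le C(\Omega,p)\|f\|_{L^p(\Omega)}$, with the zero-trace condition preserved by the construction. The linearity of $\mathcal{B}$ is immediate from the explicit formula, which completes the construction.
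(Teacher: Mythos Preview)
Your outline is essentially correct and follows the standard Bogovski\u{\i} construction: decomposition of a Lipschitz domain into finitely many pieces star-shaped with respect to balls, the explicit integral operator on each piece, and the Calder\'on--Zygmund estimate for the gradient. One point to be careful about in the decomposition step is the construction of the zero-mean correctors $g_j$: the sets $B_j\cap B_{j+1}$ need not be nonempty for an arbitrary numbering, so you should use the connectedness of $\Omega$ to arrange an overlapping chain (or a tree) of the $\Omega_j$'s and place the correctors in successive nonempty intersections; this is standard but should be stated precisely. Otherwise the argument is sound.

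As for comparison with the paper: the paper does \emph{not} prove this lemma. It is quoted as a well-known result with references to Novotn\'y--Stra\v{s}kraba and Danchin--Mucha, and no proof is given. Your sketch is exactly the classical proof one finds in those references (and in Galdi's monograph), so there is no substantive difference in approach to discuss.
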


Next, we report the following general result concerning the compensated compactness (see \cite{Mu} or \cite{Ta})
\begin{lm} \label{DivCurl}
Let $\bm{U}_n$, $\bm{V}_n$ be two sequences such that
\[
\bm{U}_n \to \bm{U} \quad \text{ weakly in } L^p(\Omega,\R^3),
\]
\[
\bm{V}_n \to \bm{V} \quad \text{ weakly in } L^q(\Omega,\R^3),
\]
where $\frac 1s = \frac 1p + \frac 1q <1$, and
\[
\dv \bm{U}_n \quad \text{is precompact in } W^{-1,r}(\Omega),
\]
\[ \operatorname{curl} {\bm V}_n \quad\text{is precompact in } W^{-1,r}(\Omega,\R^{3\times3})
\]
for a certain $r>0$. Then
$$
\bm{U}_n \cdot \bm{V}_n \to \bm{U} \cdot \bm{V} \qquad \text{ weakly in } L^s(\Omega).
$$
\end{lm}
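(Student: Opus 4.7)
This is the classical Murat--Tartar div--curl lemma; I would use the standard Helmholtz decomposition / Fourier approach. As a first reduction, write
$\bm{U}_n \cdot \bm{V}_n - \bm{U}\cdot\bm{V} = (\bm{U}_n-\bm{U})\cdot(\bm{V}_n-\bm{V}) + \bm{U}\cdot(\bm{V}_n-\bm{V}) + (\bm{U}_n-\bm{U})\cdot\bm{V}$
and observe that the last two cross--terms converge to zero weakly in $L^s(\Omega)$ by pure weak--strong pairing: for any test function $\psi \in L^{s'}(\Omega)$ one has $\psi \bm{U} \in L^{q'}(\Omega)$ by H\"older (since $1/s' = 1/q' - 1/p$), and $\bm{V}_n - \bm{V}\rightharpoonup 0$ in $L^q$, with the analogous computation on the other side. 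Hence one may assume $\bm{U} = 0$, $\bm{V} = 0$. Since $\bm{U}_n\cdot\bm{V}_n$ is bounded in $L^s(\Omega)$ by H\"older, it suffices to prove that $\int_\Omega \varphi\,\bm{U}_n\cdot\bm{V}_n\,\dx \to 0$ for every $\varphi \in C_c^\infty(\Omega)$.

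Fix such $\varphi$ and choose a cutoff $\chi \in C_c^\infty(\Omega)$ with $\chi \equiv 1$ on $\mathrm{supp}\,\varphi$. Extending $\chi\bm{U}_n$ by zero to $\R^3$, perform the Helmholtz decomposition $\chi\bm{U}_n = \nabla \phi_n + \bm{W}_n$ on the whole space, with $\phi_n = \Delta^{-1}\dv(\chi\bm{U}_n)$ and $\dv\bm{W}_n = 0$. The identity $\dv(\chi\bm{U}_n) = \chi\dv\bm{U}_n + \nabla\chi\cdot\bm{U}_n$ shows that this divergence is the sum of a term precompact in $W^{-1,r}$ by hypothesis and a term bounded in $L^p(\Omega)$ which, being also weakly convergent to zero, is precompact in $W^{-1,p_\ast}(\Omega)$ for some $p_\ast \leq p$ by the Rellich compactness $L^p(\Omega) \hookrightarrow\hookrightarrow W^{-1,p_\ast}(\Omega)$. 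Consequently $\nabla \phi_n \to 0$ strongly in $L^{\min(r,p_\ast)}(\R^3)$ by the Calder\'on--Zygmund estimate $\|\nabla^2\Delta^{-1}f\|_{L^t} \lesssim \|f\|_{L^t}$, and combining with the uniform bound $\|\nabla\phi_n\|_{L^p}\lesssim \|\chi\bm{U}_n\|_{L^p}$ and interpolation yields strong convergence $\nabla\phi_n\to 0$ in $L^{q'}(\R^3)$. The hypothesis $1/s = 1/p + 1/q < 1$ is used here: it is equivalent to $q' < p$, which ensures that $q'$ lies in the interpolation range.

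Split accordingly:
\[
\int_\Omega \varphi\,\bm{U}_n\cdot\bm{V}_n\,\dx = \int_{\R^3}\varphi\,\nabla\phi_n\cdot\bm{V}_n\,\dx + \int_{\R^3}\varphi\,\bm{W}_n\cdot\bm{V}_n\,\dx.
\]
The first integral goes to zero by strong--weak pairing of $\nabla\phi_n \in L^{q'}$ against $\varphi\bm{V}_n \in L^q$. For the second, exploit that the divergence--free field $\bm{W}_n$ admits a vector potential $\bm{\Psi}_n = \mathrm{curl}\,\Delta^{-1}\bm{W}_n$, defined in Fourier by $\widehat{\bm{\Psi}_n}(\xi) = -i\,\xi\times\widehat{\bm{W}_n}(\xi)/|\xi|^2$, which is bounded in a Sobolev space of order $+1$ relative to $L^p$ (again by Calder\'on--Zygmund), while $\bm{W}_n = \mathrm{curl}\,\bm{\Psi}_n$. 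Integrating by parts, $\int \varphi\bm{W}_n\cdot\bm{V}_n = \int \bm{\Psi}_n\cdot\mathrm{curl}(\varphi\bm{V}_n)$, and expanding the right--hand side produces a term pairing $\bm{\Psi}_n$ (with compactness from the Sobolev embedding plus the $L^p$--bound) against $\varphi\,\mathrm{curl}\,\bm{V}_n$ (precompact in $W^{-1,r}$ by hypothesis, hence strongly convergent to zero in that space) plus a lower--order boundary term $\int (\nabla\varphi\times\bm{\Psi}_n)\cdot\bm{V}_n$ treated by Rellich compactness of $\bm{\Psi}_n$.

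The main obstacle is not conceptual but the bookkeeping of the exponents: the arbitrary $r>0$ in the hypothesis, the $p,q$ from the $L^p$--bounds, and the Sobolev shifts produced by $\Delta^{-1}$ and $\nabla\Delta^{-1}$ must all be arranged so that the final duality pairings take place in compatible spaces. The condition $1/p + 1/q < 1$ is precisely the algebraic constraint (equivalent to $q' < p$ and to $p' > q$) that allows the two ``halves'' of the decomposition to close, and is the only place this hypothesis is actually used.
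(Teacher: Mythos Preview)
The paper does not prove this lemma; it is stated in Section~3 as a classical compensated--compactness result with references to Murat and Tartar, so there is no in--paper proof to compare against. Your plan follows the standard Helmholtz--decomposition route and is conceptually sound.

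Two minor remarks. First, the parenthetical ``$p'>q$'' at the end is a slip: the condition $1/p+1/q<1$ is equivalent to $q'<p$ and to $p'<q$ (not $p'>q$), and it is the latter inequality that closes the symmetric half of the argument for the curl part. Second, the one genuinely delicate technicality---which you rightly flag---is that $\Delta^{-1}$ on $\R^3$ has symbol singular at the origin, so the Calder\'on--Zygmund bound $\|\nabla^2\Delta^{-1}f\|_{L^t}\lesssim\|f\|_{L^t}$ does not by itself upgrade $W^{-1,t}$--precompactness of $\dv(\chi\bm{U}_n)$ to $L^t$--strong convergence of $\nabla\phi_n=\nabla\Delta^{-1}\dv(\chi\bm{U}_n)$. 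The usual fix is either to localize once more so that elliptic regularity plus Rellich apply on a bounded set, or to observe that the zero--order projector $\nabla\Delta^{-1}\dv$ maps $W^{-1,t}\to W^{-1,t}$ boundedly, giving precompactness of $\nabla\phi_n$ there, and then interpolate against the $L^p$ bound to reach $L^{q'}$. This is exponent bookkeeping rather than a missing idea, consistent with your own closing assessment.
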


We will further need the following operators
\begin{equation}
\bm {{\cal A}}[\cdot] = \{\A_i\}_{i=1,2,3}[\cdot]  = \nabla \Delta^{-1}[\cdot],
\end{equation}
where $\Delta^{-1}$ stands for the inverse of the Laplace operator on $\R^3$. To be more specific, the Fourier symbol of $\A_j$ is
\begin{equation}
{\cal F} (\A_j)(\xi) = \frac{-{\rm i}\xi_j}{|\xi|^2}.
\end{equation}
Note that for a sufficiently smooth $v$
\begin{equation}
\sum_{i=1}^3 \partial_{i} \A_i[v] =v
\end{equation}
and, by virtue of the classical Marcinkiewicz multiplier theorem,
\begin{equation}\label{ellipticreg}
\| \nabla \bm {{\cal A}}  [v] \|_{L^{s}(\Omega,\R^3)} \le C(s,\Omega) \| v\|_{L^s(\Omega)}, 1 < s< \infty.
\end{equation}
Note that (see \cite{feireisl2009singular})  if $ v, \partial_t v \in L^p( (0,T) \times \R^3) $, then
\begin{equation}\label{rieszpartial}
\partial_t \bm {{\cal A}}  [v(t,\cdot) ](x) = \bm {{\cal A}} [ \partial_t v(t,\cdot)] (x)~\text{for a.a}~(t,x) \in (0,T) \times \R^3.
\end{equation}
Next, let us also introduce the so-called \textit{Riesz operators}
\begin{equation}
\Rr_{ij}[\cdot] = \partial_{j} \A_i[\cdot] = \partial_j \partial_i \Delta^{-1}[\cdot],
\end{equation}
or, in terms of Fourier symbols, ${\cal F}(\Rr_{ij})(\xi) = \frac{\xi_i \xi_j}{|\xi|^2} $. We recall some of its evident properties needed in the sequel. We have
\begin{equation}\label{R1}
\sum_{i=1}^3 \Rr_{ii}[g] = g, \quad g \in L^r(\R^3), \quad 1 < r < \infty,
\end{equation}
\begin{equation}\label{symetrieriesz}
\int_{\R^3} \Rr_{ij}[u]v \, \dx= \int_{\R^3} u \Rr_{ij}[v] \dx, \quad u \in L^r(\R^3), v \in L^{r'}(\R^3), \quad 1 < r <\infty,
\end{equation}
and
\begin{equation} \label{3.22}
\|\Rr_{ij}[u]\|_{L^p(\R^3)} \leq c(p)\|u\|_{L^p(\R^3)}, \qquad 1<p<\infty.
\end{equation}

\section{Approximation}\label{s:4}

We first focus on the proof of the auxiliary result, i.e. on Theorem \ref{mainthm2}. The problem can be viewed as compressible Navier--Stokes system with two densities, where one is connected with inertia of the fluid and the other one with the pressure. The proof of Theorem  \ref{mainthm2} is hence very similar to the  construction of solutions to the usual  barotropic  Navier--Stokes equations.

The purpose of this section is to introduce subsequent levels of approximation and to formulate relevant existence theorems for each of them. The proofs of these theorems are presented afterwards by performing several limit passages when corresponding  approximation parameters vanish. We first regularize the pressure in order to get higher integrability of $Z$ (and also of $\vr$) in order to obtain the renormalized continuity equations using the DiPerna--Lions technique \cite{diperna1989ordinary}. Next we regularize the continuity equations (for both $\vr$ and $Z$). The construction of a solution is done at another level of approximation, the Galerkin approximation for the velocity.

\subsection{First approximation level}

A weak solution of problem (\ref{2a}) (\ref{3}) is obtained as a limit when $ \delta \to 0^+ $ of the solutions to following problem
\begin{subequations}\label{wsstep2}
\begin{equation}\label{contstep2}
\partial_t \vr + \dv(\vr \bu)= 0,
\end{equation}
\begin{equation}\label{entstep2}
\partial_t Z + \dv(Z \bu)=0,
\end{equation}
\begin{equation}\label{momstep2}
\partial_t (\vr \bu) + \dv(\vr \bu \otimes \bu) + \nabla Z^\gamma + \delta  \nabla Z^\beta = \dv\tn{S}(\nabla \bu)
\end{equation}
\end{subequations}
with the boundary conditions
\begin{equation} \label{uboundary_2}
\bu_{|(0,T)\times \partial \Omega} = \bm{0},
\end{equation}
and modified initial data
\begin{subequations} \label{reg_init}
\begin{equation} \label{reg_init_1}
(\vr(0,\cdot),Z(0,\cdot)) =( \vr_{0,\delta}(\cdot),Z_{0,\delta}(\cdot)) \in C^{\infty}(\overline{\Omega},\R^2), \quad 0 < c_\star \vr_{0,\delta} \le Z_{0,\delta} \le c^\star \vr_{0,\delta} ~\text{in}~ \overline{\Omega},
\end{equation}
\begin{equation} \label{reg_init_2}
\nabla \vr_{0,\delta} \cdot \bn_{|(0,T)\times \partial \Omega} = 0, \quad \nabla Z_{0,\delta} \cdot \bn_{|(0,T)\times \partial \Omega} = 0,
\end{equation}
\begin{equation} \label{reg_init_3}
(\vr \bu) (0,\cdot) = \bm{q}_{0,\delta}(\cdot) \in C^\infty(\overline{\Omega},\R^3).
\end{equation}
\end{subequations}
The specific assumption on the initial data (\ref{reg_init_2}) is not needed here, at this approximation level we would be satisfied with less regular approximation without this condition. However, more regular approximation with the above mentioned compatibility condition is needed at another approximation level and we prefer to regularize the initial condition just once.

Note that we require $\bm{q}_{0,\delta} \to \bm{q}_0$ in $L^{\frac{2\gamma}{\gamma+1}}(\Omega;\R^3)$ and $\vr_{0,\delta} \to \vr_0$, $Z_{0,\delta} \to Z_0$, both in $L^{\gamma}(\Omega)$. While the first part, i.e. the initial condition for the linear momentum, is easy to ensure by standard mollification, the regularization of the initial condition for $Z$ and $\vr$ is more complex. However, we may multiply $Z_0$ by a suitable cut-off function (to set the function to be zero near the boundary), then add a small constant to this function and finally mollify it; i.e.
$$
Z_{0,\delta} = (\varphi_\delta Z_{0}+ \delta)*\omega_\delta.
$$
It is not difficult to see that for suitably chosen cut-off function $\varphi_\delta$\footnote{We may take $\varphi_\delta \in C^\infty_c(\Omega)$ such that $0 \leq \varphi_\delta \leq 1$ in $\Omega$ with $\varphi_\delta(x) = 1$ if (for $x \in \Omega$) $\operatorname{dist}\{x, \partial \Omega\} \geq \frac{\delta}{2}$ and  $\varphi_\delta(x) = 0$ if $\operatorname{dist}\{x, \partial \Omega\} \leq \frac{\delta}{4}$.} all properties connected with $Z_{0,\delta}$  in (\ref{reg_init_1})--(\ref{reg_init_2}) will be fulfilled as well as $Z_{0,\delta} \to Z_0$ in $L^\gamma (\Omega)$ for $\delta \to 0^+$. Similarly we proceed for $\vr_{0}$. By a suitable regularization of the initial linear momentum we may also ensure that
$$
\frac{|\bm{q}_{0,\delta}|^2}{\vr_{0,\delta}} 1_{\{\vr_0 >0\}} \to \frac{|\bm{q}_{0}|^2}{\vr_{0}} 1_{\{\vr_0 >0\}}
$$
in $L^1(\Omega)$.

\subsection{Second approximation level}

We prove the existence of a solution to problem (\ref{wsstep2})--(\ref{reg_init}) by letting $\epsilon \to 0^+$ in the following approximate system.
Given $\epsilon,\delta >0$, we consider
\begin{subequations}\label{wsstep3}
\begin{equation}\label{contstep3}
\partial_t \vr + \dv(\vr \bu)= \epsilon \Delta \vr,
\end{equation}
\begin{equation}\label{entstep3}
\partial_t Z + \dv(Z \bu)= \epsilon \Delta Z,
\end{equation}
\begin{equation}\label{momstep3}
\partial_t (\vr \bu) + \dv(\vr \bu \otimes \bu) + \nabla Z^\gamma + \delta  \nabla Z^\beta + \epsilon \nabla \bu \cdot \nabla \vr = \dv ( \mathbb{S}(\nabla \bu)),
\end{equation}
\end{subequations}
supplemented with the boundary conditions
\begin{equation} \label{uboundary_3_1}
\nabla_x \vr \cdot \bn_{|(0,T)\times \partial \Omega} = 0, \quad \nabla_x Z \cdot \bn_{|(0,T)\times \partial \Omega} = 0,
\end{equation}
\begin{equation} \label{uboundary_3_2}
\bu_{|(0,T)\times \partial \Omega} = \bm{0},
\end{equation}
and modified initial data (\ref{reg_init}) (see the comments above).

\subsection{Existence results for the approximate systems}

Let us present now the existence result for the first approximation level
\begin{prop}\label{step2ex}
Let
$\beta \geq \max(\gamma,4)$, $\delta >0$.
Then, given initial data $(\vr_{0,\delta},Z_{0,\delta},\bu_{0,\delta}) $ as in (\ref{reg_init}), there exists a finite energy weak solution $(\vr,Z,\bu)$ to problem (\ref{wsstep2})--(\ref{reg_init}) such that
\begin{equation}
(\vr,Z,\bu) \in [ L^\infty(0,T;L^\beta(\Omega)) ]^2 \times L^2(0,T;W^{1,2}_0(\Omega,\R^3)),
\end{equation}
\begin{equation}\label{inestep2delta}
0\le c_\star \vr \le Z \le c^\star \vr~\text{a.e in}~ \QT,
\end{equation}
and for any $t \in (0,T)$ we have:
\begin{description}
\item{(i)}
$ \vr \in C_w([0,T];L^\beta(\Omega))$ and the continuity equation (\ref{contstep2}) is satisfied in the weak sense
\begin{equation}\label{contstep2delta}
\int_\Omega \vr(t,\cdot) \varphi(t,\cdot) \, \dx- \int_\Omega \vr_{0,\delta} \varphi(0,\cdot) \, \dx= \int_0^t \int_\Omega \big(\vr \partial_t \varphi + \vr \bu \cdot \nabla \varphi\big) \, \dx\, \dtau, \forall \varphi \in C^1(\QTb);
\end{equation}
\item{(ii)}
$ Z \in C_w([0,T];L^\beta(\Omega))$ and  equation (\ref{entstep2}) is satisfied in the weak sense
\begin{equation}\label{entstep2delta}
\int_\Omega Z(t, \cdot) \varphi(t,\cdot) \, \dx- \int_\Omega Z_{0,\delta} \varphi(0,\cdot) \, \dx= \int_0^t \int_\Omega \big(Z \partial_t \varphi + Z \bu \cdot \nabla \varphi\big) \, \dx\, \dtau, \forall \varphi \in C^1(\QTb);
\end{equation}
\item{(iii)} $ \vr \bu \in C_w([0,T];L^{\frac{2\beta}{\beta+1}}(\Omega,\R^3))$ and
the momentum equation (\ref{momstep2}) is satisfied in the weak sense
\begin{multline}\label{momstep2delta}
\int_\Omega \vr \bu (t, \cdot) \cdot \bm{\psi}(t,\cdot) \, \dx- \int_\Omega \bm{q}_{0,\delta} \cdot \bm{\psi}(0,\cdot) \, \dx= \int_0^t \int_\Omega \Big(\vr \bu \cdot \partial_ t \bm{\psi}  + \vr \bu \otimes \bu : \nabla \bm{ \psi} + Z^\gamma \dv \bm{\psi}   \\
+ \delta Z^\beta \dv \bm{ \psi}- \mathbb{S}(\nabla \bu) : \nabla \bm{ \psi}\Big) \, \dx\, \dtau, \forall \bm{\psi} \in C_c^1([0,T] \times \Omega,\R^3);
\end{multline}
\item{(iv)}
the energy inequality
\begin{equation}\label{energystep2}
 {\cal E}_\delta(\vr,\bu,Z) (t) + \int_0^t \int_\Omega \mathbb{S}(\nabla \bu) : \nabla \bu  \, \dx\, \dtau \le \E_\delta(\vr_{0,\delta},Z_{0,\delta},\bu_{0,\delta})
\end{equation}
holds for a.a $ t \in (0,T) $, where ${\cal E}_\delta(\vr,\bu,Z) = \int_\Omega \big(\frac 12 \vr |\bu|^2 + \frac{\delta}{\beta-1} Z^\beta+ \frac{1}{\gamma-1} Z^\gamma\big)\, \dx$;
\item{(v)}
the following estimates hold with constants independent of $\delta$
\begin{equation}\label{eststep21}
\sup_{t \in [0,T]} \| \vr(t) \|^\gamma_{L^\gamma(\Omega)}+ \sup_{t \in [0,T]} \| Z(t) \|^\gamma_{L^\gamma(\Omega)} \le C(\gamma,c_\star) \E_\delta (\vr_{0,\delta},Z_{0,\delta},\bu_{0,\delta}),
\end{equation}
\begin{equation}\label{eststep22}
\delta \sup_{t \in [0,T]} \| \vr(t) \|_{L^\beta(\Omega)}^\beta + \delta \sup_{t \in [0,T]} \| Z(t) \|_{L^\beta(\Omega)}^\beta \le C(\beta,c_\star) \E_\delta (\vr_{0,\delta},Z_{0,\delta},\bu_{0,\delta}),
\end{equation}
\begin{equation}\label{eststep23}
\| \bu \|_{L^2(0,T;W^{1,2}_0(\Omega,\R^3))} \le C \E_\delta (\vr_{0,\delta},Z_{0,\delta},\bu_{0,\delta}),
\end{equation}
\begin{equation}\label{eststep24}
\sup_{t\in [0,T]} \| \vr \bu \|_{L^{\frac{2\gamma}{\gamma+1}}(\Omega,\R^3))}+ \sup_{t\in [0,T]} \| Z \bu \|_{L^{\frac{2\gamma}{\gamma+1}}(\Omega,\R^3)} \le  C(\gamma,c^\star, \E_\delta (\vr_{0,\delta},Z_{0,\delta},\bu_{0,\delta})),
\end{equation}
\begin{equation}\label{eststep25}
\| \vr \bu  \|_{L^2(0,T;L^{\frac{6\gamma}{\gamma+6}}(\Omega,\R^3))}+ \| Z \bu \|_{L^2(0,T;L^{\frac{6\gamma}{\gamma+6}}(\Omega,\R^3))} \le C(\gamma,c_\star,\E_\delta (\vr_{0,\delta},Z_{0,\delta},\bu_{0,\delta})),
\end{equation}
\begin{equation}\label{eststep26}
\| \vr |\bu |^2 \|_{L^1(0,T;L^{\frac{3\gamma}{\gamma+3}}(\Omega))}+ \| Z |\bu|^2 \|_{L^1(0,T;L^{\frac{3\gamma}{\gamma+3}}(\Omega))} \le  C(\gamma,c_\star,\E_\delta (\vr_{0,\delta},Z_{0,\delta},\bu_{0,\delta})),
\end{equation}
\begin{equation}\label{eststep27}
\| \vr |\bu |^2 \|_{L^2(0,T;L^{\frac{6\gamma}{4\gamma+3}}(\Omega))}+ \| Z |\bu|^2 \|_{L^2(0,T;L^{\frac{6\gamma}{4\gamma+3}}(\Omega))} \le  C(\gamma,c_\star,\E_\delta (\vr_{0,\delta},Z_{0,\delta},\bu_{0,\delta})),
\end{equation}
\begin{equation} \label{eststep28}
\begin{array}{c}
\vspace{0.2cm}
\|\vr\|^{\gamma+\theta}_{L^{\gamma+\theta}(\QT)} + \delta \|\vr\|^{\beta+\theta}_{L^{\beta+\theta}(\QT)} + \|Z\|^{\gamma+\theta}_{L^{\gamma+\theta}(\QT)} \\
+ \delta \|Z\|^{\beta+\theta}_{L^{\beta+\theta}(\QT)} \leq  C(\gamma,c_\star,\E_\delta (\vr_{0,\delta},Z_{0,\delta},\bu_{0,\delta})),
\end{array}
\end{equation}
\end{description}
where $\theta = \min\{\frac 23 \gamma-1,\frac{\gamma}{2}\}$. Moreover, equations  (\ref{contstep2}), (\ref{entstep2}) hold in the sense of renormalized solutions in $ \D'((0,T)\times \Omega)$ and $ \D'((0,T)\times \R^3)$  provided $\vr,Z,\bu $  are prolonged by zero outside $\Omega$.
\end{prop}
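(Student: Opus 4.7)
The strategy is to construct the solution announced in Proposition \ref{step2ex} by passing to the limit $\epsilon \to 0^+$ in a sequence of triplets $(\vr_\epsilon, Z_\epsilon, \bu_\epsilon)$ solving the doubly-regularized system (\ref{wsstep3})--(\ref{uboundary_3_2}) with initial data (\ref{reg_init}); the existence of these $\epsilon$-approximations will be established separately through a Galerkin scheme. Since the parabolic regularizations make the densities strictly positive and continuous, the pointwise bound $c_\star \vr_\epsilon \le Z_\epsilon \le c^\star \vr_\epsilon$ is preserved in time by the comparison principle applied to $Z_\epsilon/\vr_\epsilon$, and then (\ref{inestep2delta}) follows in the limit by weak lower semicontinuity. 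The $\epsilon$-level energy equality combined with Korn's inequality (Lemma \ref{KIneq}) gives the uniform bounds (\ref{eststep21})--(\ref{eststep23}) as well as the $\sqrt{\epsilon}$-weighted estimates on $\nabla \vr_\epsilon$, $\nabla Z_\epsilon$ that will annihilate the parabolic terms in the limit; (\ref{eststep24})--(\ref{eststep27}) follow by H\"older, Sobolev, and interpolation.

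The improved density integrability (\ref{eststep28}), with $\theta = \min\{\tfrac{2\gamma}{3} - 1,\tfrac{\gamma}{2}\}$, is derived in the classical way of \cite{FNP} by testing the momentum equation by
\[
\bm{\psi}(t,x) = \varphi(t)\,\bm{\cal A}\big[Z_\epsilon^\theta - \langle Z_\epsilon^\theta\rangle_\Omega\big],
\]
using the renormalized continuity equation (Lemma \ref{ReSolLDP}, available since $\beta \geq 2$) to handle $\partial_t$ of this test function and (\ref{ellipticreg}) to control the nonlocal operator; the corresponding bound for $\vr_\epsilon$ is immediate from (\ref{inestep2delta}). From (\ref{contstep3})--(\ref{momstep3}) the time-derivatives $\partial_t \vr_\epsilon$, $\partial_t Z_\epsilon$, $\partial_t(\vr_\epsilon \bu_\epsilon)$ are uniformly controlled in suitable negative Sobolev spaces, so Lemma \ref{compcont} provides, up to subsequences, $\vr_\epsilon \to \vr$, $Z_\epsilon \to Z$ in $C_w([0,T]; L^\beta(\Omega))$ and $\vr_\epsilon \bu_\epsilon \to \vr \bu$ in $C_w([0,T]; L^{2\beta/(\beta+1)}(\Omega,\R^3))$. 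Combined with Lemma \ref{strongcvsob} and $\bu_\epsilon \rightharpoonup \bu$ in $L^2(0,T; W^{1,2}_0)$, this identifies the limit of the convective term $\vr_\epsilon \bu_\epsilon \otimes \bu_\epsilon \to \vr \bu \otimes \bu$ in ${\cal D}'$; the three $\epsilon$-regularization terms vanish in ${\cal D}'$ by the $\sqrt\epsilon$-dissipation bounds. Hence every contribution to (\ref{contstep2delta})--(\ref{momstep2delta}) passes to the limit except for the pressure $Z_\epsilon^\gamma + \delta Z_\epsilon^\beta$.

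The main obstacle is exactly this identification, which boils down to strong convergence of $Z_\epsilon$. I would apply the effective viscous flux method of Lions--Feireisl: test the $\epsilon$-level and limiting momentum equations by $\bm{\cal A}[Z_\epsilon]$ and $\bm{\cal A}[Z]$ respectively (both extended by zero outside $\Omega$), exploit the div-curl structure (Lemma \ref{DivCurl}) together with the symmetry (\ref{symetrieriesz}) and $L^p$-boundedness (\ref{3.22}) of the Riesz operators, and derive the two-density weak-continuity identity
\begin{equation*}
\lim_{\epsilon \to 0} \int_0^T\!\!\int_\Omega \big(Z_\epsilon^\gamma + \delta Z_\epsilon^\beta - (\lambda + 2\mu)\dv \bu_\epsilon\big) Z_\epsilon \, \dxdt = \int_0^T\!\!\int_\Omega \big(\overline{Z^\gamma + \delta Z^\beta} - (\lambda + 2\mu)\dv \bu\big) Z \, \dxdt,
\end{equation*}
with overlines denoting weak limits. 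Combined with the renormalization of the limit continuity equation with $b(z) = z\ln z$ (again by Lemma \ref{ReSolLDP}) and the strict monotonicity of the pressure function $z \mapsto z^\gamma + \delta z^\beta$, the above identity forces $\overline{Z\ln Z} = Z\ln Z$, hence $Z_\epsilon \to Z$ in $L^1(Q_T)$ and a.e.\ after extraction. Vitali's theorem, together with the improved integrability (\ref{eststep28}), upgrades this to $Z_\epsilon^\gamma + \delta Z_\epsilon^\beta \to Z^\gamma + \delta Z^\beta$ in $L^1(Q_T)$, closing the limit in (\ref{momstep2delta}). The energy inequality (\ref{energystep2}) follows by weak lower semicontinuity, and the renormalization of the limiting continuity equations (\ref{contstep2}), (\ref{entstep2}) is again supplied by Lemma \ref{ReSolLDP}.
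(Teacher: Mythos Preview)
Your proposal is correct and follows essentially the same route as the paper (Section~6): vanishing viscosity limit from Proposition~\ref{step3ex}, uniform energy bounds, $C_w$-compactness via Lemma~\ref{compcont}, the effective viscous flux identity, and the $z\ln z$ renormalization to obtain strong convergence of $Z_\epsilon$. The only cosmetic differences are that the paper derives the refined integrability (\ref{eststep28}) \emph{after} the limit using the Bogovskii operator rather than $\bm{\cal A}$ at the $\epsilon$-level, and closes the pressure identification via Minty's trick rather than directly concluding $\overline{Z\ln Z}=Z\ln Z$; neither changes the substance of the argument.
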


We have for the second approximation level

\begin{prop}\label{step3ex}
Suppose $ \beta \geq \max(4,\gamma)$. Let  $\epsilon$, $\delta >0$.  Assume the initial data $ (\vr_{0,\delta},Z_{0,\delta},\bu_{0,\delta})$ satisfy (\ref{reg_init}). Then there exists a weak solution $ (\vr,Z,\bu)$ to  problem (\ref{reg_init})--(\ref{uboundary_3_2}) such that
\begin{equation}
(\vr,Z,\bu) \in [ L^\infty(0,T;L^\beta(\Omega)) \cap L^2(0,T; W^{1,2}(\Omega)) ]^2 \times L^2(0,T;W_0^{1,2}(\Omega,\R^3)),
\end{equation}
\begin{equation}\label{inestep3}
0\le c_\star \vr \le Z \le c^\star \vr~\text{a.e in}~ \QT,
\end{equation}
and for any $t \in (0,T)$ we have:
\begin{description}
\item{(i)}
$ \vr \in C_w([0,T];L^\beta(\Omega))$ and the continuity equation (\ref{contstep3}) is satisfied in the weak sense
\begin{equation}\label{contstepep}
\begin{array}{c}
\displaystyle
\int_\Omega \vr(t, \cdot) \varphi(t,\cdot) \, \dx- \int_\Omega \vr_{0,\delta} \varphi(0,\cdot) \, \dx\\
\displaystyle = \int_0^t \int_\Omega \Big(\vr \partial_t \varphi + \vr \bu \cdot \nabla \varphi - \epsilon \nabla \vr \cdot \nabla \varphi\Big) \, \dx \, \dtau,\quad \forall \varphi \in C^1(\QTb);
\end{array}
\end{equation}
\item{(ii)}
$ Z \in C_w([0,T];L^\beta(\Omega))$ and equation (\ref{entstep3}) is satisfied in the weak sense
\begin{equation}\label{entstepep}
\begin{array}{c}
\displaystyle \int_\Omega Z(t, \cdot) \varphi(t,\cdot) \, \dx- \int_\Omega Z_{0,\delta} \varphi(0,\cdot) \, \dx\\
\displaystyle = \int_0^t \int_\Omega \Big(Z \partial_t \varphi + Z \bu \cdot \nabla \varphi -\epsilon \nabla Z \cdot \nabla \varphi\Big)  \, \dx \, \dtau, \quad \forall \varphi \in C^1(\QTb);
\end{array}
\end{equation}
\item{(iii)} $ \vr \bu \in C_w([0,T];L^{\frac{2\beta}{\beta+1}}(\Omega,\R^3))$ and
the momentum equation ($\ref{momstep3}$) is satisfied in the weak sense
\begin{multline}\label{momstepep}
\int_\Omega \vr \bu (t, \cdot) \cdot \bm{\psi}(t,\cdot) \, \dx- \int_\Omega \bm{q}_{0,\delta} \cdot \bm{\psi}(0,\cdot) \, \dx= \int_0^t \int_\Omega \Big(\vr \bu \cdot \partial_ t \bm{\psi}  + \vr \bu \otimes \bu : \nabla \bm{ \psi} + Z^\gamma \dv \bm{\psi}  \\
+ \delta Z^\beta \dv \bm{ \psi} - \mathbb{S}(\nabla \bu) : \nabla \bm{ \psi} + \epsilon \nabla \vr \cdot \nabla \bu \cdot \bm{\psi}\Big)\, \dx \, \dtau,\quad \forall \bm{\psi} \in C_c^1([0,T] \times \Omega,\R^3);
\end{multline}
\item{(iv)}
the energy inequality
\begin{multline}\label{energystep3ep}
{\cal E}_\delta(\vr, \bu, Z)(t) + \int_0^t \int_\Omega \Big(\mathbb{S}(\nabla \bu) : \nabla \bu + \frac{\epsilon \gamma}{\gamma-1} Z^{\gamma-2} |\nabla Z|^2  \\  + \frac{\epsilon\delta \beta}{\beta-1} Z^{\beta-2} |\nabla Z|^2\Big) \, \dx \, \dtau \le \E_\delta(\vr_{0,\delta},Z_{0,\delta},\bu_{0,\delta})
\end{multline}
holds for a.a $ t \in (0,T) $, where ${\cal E}_\delta(\vr, \bu, Z) $ is the same as in Proposition \ref{step2ex};
\item{(v)}
the following estimates hold with constants independent of $\epsilon$
\begin{equation}\label{eststep32}
\sup_{t \in [0,T]} \| \vr(t) \|_{L^\beta(\Omega)}^\beta +  \sup_{t \in [0,T]} \| Z(t) \|_{L^\beta(\Omega)}^\beta \le C(\beta,c_\star) \E_\delta (\vr_{0,\delta},Z_{0,\delta},\bu_{0,\delta}),
\end{equation}
\begin{equation}\label{eststep33}
  \|  \bu \|_{L^2(0, T; W^{1,2}_0(\Omega,\R^3))}  \le C\E_\delta (\vr_{0,\delta},Z_{0,\delta},\bu_{0,\delta}),
\end{equation}
\begin{equation}\label{eststep36}
\sup_{t\in [0,T]} \| \vr \bu \|_{L^{\frac{2\beta}{\beta+1}}(\Omega,\R^3))}+ \sup_{t\in [0,T]} \| Z \bu \|_{L^{\frac{2\beta}{\beta+1}}(\Omega,\R^3)}  \le  C(\beta,c_\star,\E_\delta (\vr_{0,\delta},Z_{0,\delta},\bu_{0,\delta})),
\end{equation}
\begin{equation}\label{eststep34}
\epsilon  \Big( \| \nabla \vr\|_{L^2((0,T)\times\Omega,\R^3)}^2 + \| \nabla Z \|_{L^2((0,T)\times\Omega,\R^3)}^2\Big) \le C (\beta,c_\star, {\cal E}_\delta(\vr_{0,\delta },Z_{0,\delta},\bu_{0,\delta})),
\end{equation}
\begin{equation}\label{eststep34a}
\| \vr |\bu |^2 \|_{L^2(0,T;L^{\frac{6\beta}{4\beta+3}}(\Omega))}+ \| Z |\bu|^2 \|_{L^2(0,T;L^{\frac{6\beta}{4\beta+3}}(\Omega))} \le  C(\beta,c_\star,\E_\delta (\vr_{0,\delta},Z_{0,\delta},\bu_{0,\delta})),
\end{equation}
\begin{equation} \label{eststep35}
\| \vr \|_{L^{\beta+1}(\QT)} + \| Z \|_{L^{\beta+1}(\QT)} \le C (\beta,c_\star, {\cal E}_\delta(\vr_{0,\delta },Z_{0,\delta},\bu_{0,\delta})).
\end{equation}
\end{description}
\end{prop}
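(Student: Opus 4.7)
The proof follows the classical Faedo--Galerkin/Feireisl construction adapted to the two-density system. The plan is to fix a basis $\{\bm{w}_k\}_{k\in\N}$ of $W^{1,2}_0(\Omega,\R^3)$ consisting of sufficiently smooth functions (e.g.\ eigenfunctions of the Dirichlet Laplacian) and to denote $X_n = \mathrm{span}\{\bm{w}_1,\dots,\bm{w}_n\}$. For a given $\bu\in C([0,T];X_n)$, equations (\ref{contstep3})--(\ref{entstep3}), equipped with the Neumann boundary conditions (\ref{uboundary_3_1}) and the smooth initial data (\ref{reg_init}), are linear uniformly parabolic problems in $\vr$ and $Z$; standard $L^p$-parabolic theory yields unique strong solutions $\vr[\bu],Z[\bu]\in L^p(0,T;W^{2,p}(\Omega))\cap W^{1,p}(0,T;L^p(\Omega))$ for all $p<\infty$. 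The key structural point is that $Z[\bu]-c_\star\vr[\bu]$ and $c^\star\vr[\bu]-Z[\bu]$ satisfy the \emph{same} linear parabolic equation with non-negative initial data and homogeneous Neumann data, so the weak maximum principle preserves the comparison (\ref{inestep3}); in particular, it gives a strictly positive ($n$-dependent) lower bound on $\vr[\bu]$.

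Substituting $\vr[\bu],Z[\bu]$ into the Galerkin projection of (\ref{momstep3}) reduces the problem to an ODE for the coefficients of $\bu_n$; the positivity of $\vr[\bu]$ renders the mass matrix invertible, and a Schauder/Banach fixed-point in $C([0,t^\star];X_n)$ for small $t^\star$ yields a local-in-time solution. Testing the Galerkin momentum equation by $\bu_n$ itself and combining with the identity obtained by multiplying (\ref{entstep3}) by $\tfrac{\gamma}{\gamma-1}Z^{\gamma-1}+\tfrac{\delta\beta}{\beta-1}Z^{\beta-1}$, the extra term $\epsilon\nabla\vr\cdot\nabla\bu\cdot\bu_n$ in (\ref{momstep3}) cancels exactly the kinetic contribution $-\tfrac\epsilon2\int_\Omega\Delta\vr\,|\bu_n|^2\,\dx$ arising from (\ref{contstep3}), while $\epsilon\Delta Z$ produces precisely the dissipative terms displayed in (\ref{energystep3ep}). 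This Galerkin energy identity prevents blow-up and prolongs $\bu_n$ to $[0,T]$.

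The passage $n\to\infty$ uses these bounds together with the parabolic smoothing of $\vr_n,Z_n$: for fixed $\epsilon,\delta$ one gets $\vr_n,Z_n$ bounded in $L^2(0,T;W^{1,2}(\Omega))\cap L^\infty(0,T;L^\beta(\Omega))$, and from the equations one obtains equicontinuity in a negative Sobolev space, so the Aubin--Lions lemma combined with Lemma \ref{compcont} yields strong convergence of $\vr_n,Z_n$ (preserving (\ref{inestep3})) and weak convergence of $\bu_n$ in $L^2(0,T;W^{1,2}_0)$. All nonlinearities $Z_n^\gamma$, $Z_n^\beta$ and $\vr_n\bu_n\otimes\bu_n$ pass to the limit, and weak lower semicontinuity turns the energy identity into the inequality (\ref{energystep3ep}); the time-continuity statements of (i)--(iii) follow from Lemma \ref{compcont}.

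The uniform-in-$\epsilon$ estimates (\ref{eststep32})--(\ref{eststep33}) are read off directly from (\ref{energystep3ep}) and Lemma \ref{KIneq}; (\ref{eststep36})--(\ref{eststep34a}) follow by Hölder and Sobolev embedding; (\ref{eststep34}) is derived \emph{independently} of the entropy identity by testing (\ref{entstep3}) and (\ref{contstep3}) by $Z$ and $\vr$ respectively, using $\beta\ge4$ to absorb the $\dv\bu$ terms via $L^4\hookrightarrow L^\beta$. Finally, (\ref{eststep35}) is the classical higher-integrability estimate, obtained by testing (\ref{momstep3}) by $\bm\Psi_\vr=\mathcal{B}\!\bigl[\vr-\tfrac1{|\Omega|}\int_\Omega\vr\,\dx\bigr]$ and $\bm\Psi_Z=\mathcal{B}\!\bigl[Z-\tfrac1{|\Omega|}\int_\Omega Z\,\dx\bigr]$ and invoking the mapping properties of the Bogovskii operator from Lemma \ref{Bogovskii}. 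The main technical obstacle is not any single step in isolation but the bookkeeping of the two coupled densities: one has to verify at every approximation level that both $\vr$ and $Z$ inherit the parabolic regularity and the pointwise comparison, and that the energy identity—which mixes a $\vr$-kinetic part with a $Z$-pressure part—still closes despite inertia and pressure being now driven by two distinct hyperbolic quantities.
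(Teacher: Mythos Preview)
Your proof is correct and follows essentially the same route as the paper: Galerkin approximation in velocity, parabolic solvability and comparison principle for $\vr,Z$, Schauder fixed point, the energy identity with the $\epsilon\nabla\vr\cdot\nabla\bu\cdot\bu$ cancellation, limit $n\to\infty$, and the Bogovskii test function for (\ref{eststep35}). One small slip: the embedding you invoke for (\ref{eststep34}) should read $L^\beta(\Omega)\hookrightarrow L^4(\Omega)$ (since $\beta\ge4$ on a bounded domain), not the reverse; also, the paper uses only $\bm\Psi_Z=\mathcal{B}[Z-\tfrac1{|\Omega|}\int_\Omega Z]$ for (\ref{eststep35}) and then transfers the bound to $\vr$ via (\ref{inestep3}), which is slightly cleaner than testing twice.
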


\section{Existence for the second approximation level}\label{s:5}

We are not going to present detailed proof of Proposition \ref{step3ex}, as it is similar to the corresponding step in the existence proof for the barotropic Navier--Stokes equations, cf. \cite{NoSt}. In what follows we only explain main ideas as well as how to obtain the crucial estimate (\ref{inestep3}).

We introduce another approximation level, the  Galerkin approximation for the velocity. We take a suitable basis $\{\bm{\Phi}_j\}_{j=1}^\infty$ in $W^{1,2}_0(\Omega,\R^3)$, orthonormal in $L^2(\Omega, \R^3)$, and replace (\ref{momstepep}) by
\begin{multline}\label{momstepgal}
 \int_\Omega \partial_t(\vr \bu^n) \cdot  \bm{\Phi}_j \, \dx=   \int_\Omega \Big(\vr \bu^n \otimes \bu^n : \nabla \bm{ \Phi}_j + Z^\gamma \dv \bm{\Phi}_j
+ \delta Z^\beta \dv \bm{ \Phi}_j \\ - \mathbb{S}(\nabla \bu^n) : \nabla \bm{ \Phi}_j + \epsilon \nabla \vr \cdot \nabla \bu^n \cdot \bm{\Phi}_j\Big)\, \dx, \quad\forall j=1,2,\dots, n,
\end{multline}
where $\vr$ and $Z$ solves (\ref{contstep3}) and (\ref{entstep3}), respectively, with $\bu$ replaced by $\bu^n$, and
$$
\bu^n(t,x) = \sum_{j=1}^n a^n_j(t) \bm{\Phi}_j(x).
$$
The initial condition for the momentum equation reads
$$
\vr(0,\cdot) \bu^n(0,\cdot) = P^n(\bm{q}_{0,\delta})(\cdot)
$$
with $P^n$ the corresponding orthogonal projection on the space spanned by $\{\bm{\Phi}_j\}_{j=1}^n$. We construct the solutions to the $n$-th Galerkin approximation by means of a version of the Schauder fixed point theorem. The fundamental step in this procedure is derivation of the a priori estimates. They can be obtained by using the solution $\bu^n$ as a test function in (\ref{momstepgal}) and combining it with (\ref{entstep3}) as well as  with (\ref{contstep3}). We then deduce
\begin{multline} \label{energy_gal}
{\cal E}_{\delta}(\vr,Z,\bu^n)(t) + \int_0^t\int_{\Omega} \Big({\tn S}(\nabla \bu^n):\nabla \bu^n + \epsilon\gamma Z^{\gamma-2}|\nabla Z|^2 + \epsilon\delta  \beta Z^{\beta-2} |\nabla Z|^2\Big) \, \dx \, \dtau  \\ \leq  {\cal E}_{\delta}(\vr_{0,\delta},Z_{0,\delta},P^n(\bm{q}_{0,\delta})/\vr_{0,\delta} ) \leq C
\end{multline}
with $C$ independent of $n$ (also of $\epsilon$ and $\delta$). Next, testing equations (\ref{contstep3}) and (\ref{entstep3}) by $\vr$ and $Z$, respectively, we also have
\begin{equation} \label{estimate1_gal}
\|\vr\|_{L^2(\Omega)}^2(t) + \|Z\|_{L^2(\Omega)}^2 (t) + \epsilon \int_0^t \Big(\|\nabla \vr\|_{L^2(\Omega;\R^3)}^2 + \|\nabla Z\|_{L^2(\Omega;\R^3)}^2 \Big)\dtau \leq C
\end{equation}
provided $\beta \geq 4$. Note also that
$$
\frac{{\rm d}}{\dt}\int_{\Omega} \vr \, \dx= \frac{{\rm d}}{\dt}\int_{\Omega} Z \, \dx= 0.
$$
To prove inequalities (\ref{inestep3})  we use a simple comparison principle between $\vr$ and $Z$. Taking $c_\star,c^\star$ as in (\ref{reg_init_1}) we may write
$$
\partial_t(Z-c_\star \vr ) + \dv{\big(\bu^n(Z-c_\star \vr )\big)} - \epsilon \Delta (Z-c_\star \vr ) = 0
$$
and
$$
\partial_t(c^\star \vr-Z) + \dv\big(\bu^n (c^\star \vr-Z)\big) - \epsilon \Delta (c^\star \vr-Z) = 0.
$$
As both equations have non-negative initial conditions, it is easy to see that also the solutions are non-negative and due to the uniqueness of solutions we deduce that
\begin{equation} \label{max_princ_gal}
0< c_\star \vr \leq Z \leq c^\star \vr < \infty
\end{equation}
a.e. in $\QT$. Combining (\ref{max_princ_gal}) with (\ref{energy_gal}) we also have
\begin{equation} \label{estimate2_gal}
\|\vr\|_{L^\infty(0,T; L^\beta(\Omega))} \leq C
\end{equation}
with $C= C(c_\star,\delta,\E_\delta)$. The regularity of solutions to parabolic problems allows us to deduce that we have independently of $n$
\begin{equation} \label{estimate3_gal}
\|\partial_t \vr\|_{L^q(0,T;L^q(\Omega))} + \|\partial_t Z\|_{L^q(0,T;L^q(\Omega))} + \|\vr\|_{L^q(0,T;W^{2,q}(\Omega))} + \|Z\|_{L^q(0,T;W^{2,q}(\Omega))} \leq C(\epsilon)
\end{equation}
for all $q \in (1,\infty)$.
These estimates are sufficient to apply the fixed point argument, but also to pass to the limit $n \to \infty$. To this aim, recall also that $\vr$ and $Z$ belong to $C_{w}([0,T]; L^\beta(\Omega))$ and $\vr \bu^n$ to $C_{w}([0,T]; L^{\frac{2\beta}{\beta+1}}(\Omega,\R^3))$. Hence, using several general results from Section 3 (see Lemmas \ref{strongcvsob}--\ref{compcont}) we may pass to the limit with $n \to \infty$ to recover system (\ref{reg_init})--(\ref{uboundary_3_2}) as stated in Proposition \ref{step3ex}. To finish the proof of this proposition, we have to show estimate (\ref{eststep35}).  To this aim, we use as test function in (\ref{momstepep}) $\bm{\psi}$, solution to (cf. Lemma \ref{Bogovskii} in Section 3)
$$
\dv \bm{\psi} = Z - \frac{1}{|\Omega|} \int_{\Omega} Z\,  \dx
$$
with homogeneous Dirichlet boundary conditions. Due to properties of the Bogovskii operator we may prove
$$
\|Z\|_{L^{\beta+1}(\QT)} \leq C
$$
which, together with (\ref{max_princ_gal}), finishes the proof of Proposition \ref{step3ex}.

\section{Vanishing viscosity limit: proof of Proposition \ref{step2ex}}\label{s:6}

\subsection{Limit passage based on the a priori estimates}

At this stage, we are ready to pass to the limit for $\epsilon \to 0^+$ to get rid of the diffusion term in the equations (\ref{contstep3}), (\ref{entstep3}) as well as of the $\epsilon$-dependent term in (\ref{momstep3}). Note that the parameter $\delta $ is kept fixed throughout this procedure so that we may use the estimates derived above, except (\ref{estimate3_gal}). Accordingly, the solution of problem (\ref{reg_init})--(\ref{uboundary_3_2}) obtained in Proposition \ref{step3ex} above will be denoted $(\vr_\epsilon,Z_\epsilon,\bu_\epsilon)$.

First of all, by virtue of (\ref{eststep33}) and (\ref{eststep34}), we obtain
$$
\epsilon \nabla \vr_\epsilon \cdot \nabla \bu_\epsilon \to 0~\text{in}~L^1(\QT),
$$
and, analogously,
$$
\epsilon \nabla Z_\epsilon, \ \epsilon \nabla \vr_\epsilon \to 0~\text{in}~L^2(\QT).
$$

From estimates (\ref{eststep32})--(\ref{eststep35}) we further deduce
\begin{subequations}
\begin{equation}\label{contconvstep3}
\vr_\epsilon \to \vr~\text{weakly-}\star~\text{in}~L^\infty(0,T;L^\beta(\Omega))~\text{and weakly in}~ L^{\beta+1}(\QT),
\end{equation}
\begin{equation}\label{contconvstep3ent}
Z_\epsilon \to Z~\text{weakly-}\star~\text{in}~L^\infty(0,T;L^\beta(\Omega))~\text{and weakly in}~ L^{\beta+1}(\QT),
\end{equation}
\begin{equation}\label{weakconvvelstep3}
\bu_\epsilon \to \bu ~\text{ weakly in}~L^2(0,T;W^{1,2}_0(\Omega,\R^3)),
\end{equation}
\end{subequations}
passing to subsequences if necessary.

By virtue of (\ref{inestep3}) and the weak $L^{\beta+1}$-convergence derived above we obtain
\begin{equation}
0 \le c_\star \vr \le Z \le c^\star \vr~\text{a.e. in}~ \QT.
\end{equation}
Due to (\ref{contstepep}), (\ref{entstepep}), (\ref{eststep34}) and (\ref{eststep35}), $\vr_\epsilon $ and $Z_\epsilon$ are uniformly continuous in $ W^{-1,\frac{2\beta}{\beta+1}}(\Omega)$. Since they belong to $C_w([0,T];L^\beta (\Omega)) $  and they are uniformly bounded in $L^\beta (\Omega)$ (by virtue of (\ref{eststep32})), we use Lemma \ref{compcont}, in order to get at least for a chosen subsequence
\begin{equation}
\vr_\epsilon \to \vr, \quad Z_\epsilon \to Z~\text{in}~C_w([0,T];L^\beta (\Omega)).
\end{equation}

Once we realize that the imbedding $ L^s (\Omega) \hookrightarrow W^{-1,2}(\Omega) $ is compact for $s >\frac{6}{5}$, we apply Lemma \ref{strongcvsob} to $\vr_\epsilon $ and $Z_\epsilon$, and  obtain
\begin{equation}
\vr_\epsilon \to \vr,\quad Z_\epsilon \to Z~\text{in}~L^p (0,T;W^{-1,2}(\Omega)), \quad 1 \le p < \infty.
\end{equation}
Consequently, by virtue of the previous formula, (\ref{eststep36}) and (\ref{weakconvvelstep3}) we obtain
\begin{equation}
\vr_\epsilon \bu_\epsilon \to \vr \bu,\quad Z_\epsilon \bu_\epsilon \to Z \bu~\text{weakly-$\star$ in}~L^\infty (0,T; L^{\frac{2\beta}{\beta+1}}(\Omega,\R^3)).
\end{equation}

Taking into account (\ref{momstepep}) and (\ref{eststep32})--(\ref{eststep35}) we conclude that $ \vr_\epsilon \bu_\epsilon $ is uniformly continuous in $W^{-1,s}(\Omega, \R^3)$, where $ s = \frac{\beta+1}{\beta} $. Since it belongs to $ C_w([0,T];L^{\frac{2\beta}{\beta+1}}(\Omega,\R^3))$ and since it is uniformly bounded in $ L^{\frac{2\beta}{\beta+1}}(\Omega,\R^3)$ (see (\ref{eststep36})), Lemma \ref{compcont} yields
\begin{equation}
\vr_\epsilon \bu_\epsilon \to \vr \bu~\text{in}~C_w([0,T];L^{\frac{2\beta}{\beta+1}}(\Omega,\R^3)).
\end{equation}
The imbedding $ L^{\frac{2\beta}{\beta+1}}(\Omega) \hookrightarrow W^{-1,2}(\Omega) $ is compact, hence we deduce from Lemma \ref{strongcvsob}
\begin{equation}
\vr_\epsilon \bu_\epsilon \to \vr \bu~\text{strongly in}~L^p(0,T;W^{-1,2}(\Omega,\R^3)).
\end{equation}
It implies, together with  (\ref{weakconvvelstep3}) that
\begin{equation}
\vr_\epsilon \bu_\epsilon \otimes \bu_\epsilon \to \vr \bu \otimes \bu~\text{in}~ L^q((0,T)\times \Omega;\R^{3\times 3})
\end{equation}
for some $q>1$.

We have proven that the limits $\vr$, $Z$ and $\bu$ satisfy for any $t\in[0,T]$ the following system of equations
\begin{equation}
\int_\Omega \vr(t, \cdot) \varphi(t,\cdot) \, \dx- \int_\Omega \vr_{0,\delta} \varphi(0,\cdot)\, \dx=
 \int_0^t \int_\Omega \Big(\vr \partial_t \varphi + \vr \bu \cdot \nabla \varphi\Big) \, \dx  \, \dtau, \forall \varphi \in C^1 (\QTb);
\end{equation}
\begin{equation}\label{entstep3limit}
\int_\Omega Z(t, \cdot) \varphi(t,\cdot) \, \dx- \int_\Omega Z_{0,\delta} \varphi(0,\cdot)\, \dx=
\int_0^t \int_\Omega \Big(Z \partial_t \varphi + Z \bu \cdot \nabla \varphi\Big) \, \dx \, \dtau, \forall \varphi \in C^1 (\QTb);
\end{equation}
\begin{multline}
\int_\Omega \vr \bu(t. \cdot) \cdot \bm{\psi}(t,\cdot) \, \dxdt - \int_\Omega \bm{q}_{0,\delta}  \cdot \bm{\psi}(0,\cdot) \, \dxdt
= \int_0^t \int_\Omega \Big(\vr \bu \cdot \partial_ t \bm{\psi}  + \vr \bu \otimes \bu : \nabla \bm{ \psi} + \Ov{p} \dv \bm{ \psi}  \\
\\ - \mathbb{S}(\nabla \bu) : \nabla \bm{ \psi} \Big)\, \dx  \, \dtau, \forall \bm{\psi} \in C_c^1([0,T] \times \Omega,\R^3),
\end{multline}
where, by virtue of  (\ref{eststep35}),
\begin{equation}\label{weakcvpressurestep3}
Z_\epsilon^\gamma+ \delta Z_\epsilon^\beta \to \Ov{p}~\text{weakly in }~L^{\frac{\beta+1}{\beta}}(\QT).
\end{equation}
In particular, equations (\ref{contstep3}), (\ref{entstep3}) and (\ref{momstep3}) (with $\Ov{p}$ instead of $Z^\gamma+ \delta Z^\beta$) are satisfied in the sense of distributions and  the limit functions satisfy the initial condition
\begin{equation}
\vr(0,\cdot)= \vr_{0,\delta}(\cdot), \quad Z(0,\cdot)=Z_{0,\delta}(\cdot),\quad (\vr \bu) (0,\cdot)=\bm{q}_{0,\delta}(\cdot),
\end{equation}
where $(\vr_{0,\delta}, Z_{0,\delta}, \bm{q}_{0,\delta}) $ are defined in (\ref{reg_init}).

Thus our ultimate goal is to show that
\begin{equation}\label{pressurerelation}
\Ov{p}= Z^\gamma + \delta Z^\beta
\end{equation}
which is equivalent to the strong convergence of $Z_\epsilon$ in $L^1(\QT)$.

\subsection{Effective viscous flux}

We introduce the quantity $Z^\gamma+\delta Z^\beta -(\lambda+2\mu)\dv \bu $ called usually the effective viscous flux. This quantity enjoys remarkable properties for which we refer to Hoff \cite{hoff1995strong}, Lions \cite{lions1998mathematical}, or Serre \cite{serre1991variations}. We have the following crucial result.
\begin{lm}\label{effectivestep3}
Let $\vr_\epsilon,Z_\epsilon,\bu_\epsilon$ be the sequence of approximate solutions, the existence of which is guaranteed by Proposition \ref{step3ex}, and let $\vr,Z,\bu$ and $\Ov{p}$ be the limits appearing in (\ref{contconvstep3}), (\ref{contconvstep3ent}), (\ref{weakconvvelstep3})  and (\ref{weakcvpressurestep3}) respectively.
Then
\begin{equation*}
\lim_{\epsilon \to 0^+} \int_0^T \psi \int_\Omega \phi  \Big( Z_\epsilon^\gamma+ \delta Z_\epsilon^\beta-(\lambda+2\mu)\dv \bu_\epsilon \Big) Z_\epsilon \, \dxdt \\
= \int_0^T \psi \int_\Omega \phi  \Big( \Ov{p}-(\lambda+2\mu)\dv \bu \Big) Z \, \dxdt
\end{equation*}
for any $\psi \in C_c^\infty ((0,T))$ and $\phi \in C_c^\infty(\Omega)$, passing to subsequences, if necessary.
\end{lm}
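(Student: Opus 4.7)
The plan is to adapt the Lions--Feireisl identification of the effective viscous flux to the present two-density setting. The key step is to test the approximate momentum equation (\ref{momstepep}) against
$$
\bm{\psi}_\epsilon(t,x)=\psi(t)\,\phi(x)\,\bm{{\cal A}}\big[1_\Omega Z_\epsilon\big](t,x),
$$
and to test the limiting momentum equation against $\bm{\psi}(t,x)=\psi(t)\phi(x)\bm{{\cal A}}[1_\Omega Z]$. Because $\sum_i\partial_i\bm{{\cal A}}_i[v]=v$, the pairing of the pressure and of $\tn{S}(\nabla\bu_\epsilon)$ with $\dv\bm{\psi}_\epsilon$ immediately produces the two quantities compared in the statement, together with lower-order pieces carrying a factor $\nabla\phi$ that pass to the limit thanks to the strong convergence $\bm{{\cal A}}[Z_\epsilon]\to\bm{{\cal A}}[Z]$ in $L^\infty(0,T;L^q(\Omega))$ for every $q<\infty$ (a consequence of $Z_\epsilon\to Z$ in $C_w([0,T];L^\beta(\Omega))$, of the estimate (\ref{ellipticreg}), and of a compact Sobolev embedding).

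It remains to show that the other terms generated by the test cancel out in the limit $\epsilon\to 0^+$. For the time-derivative pairing I would use (\ref{rieszpartial}) to commute $\partial_t$ with $\bm{{\cal A}}$ and then invoke (\ref{entstepep}) to replace $\partial_t Z_\epsilon$ by $-\dv(Z_\epsilon\bu_\epsilon)+\epsilon\Delta Z_\epsilon$; the parabolic correction disappears thanks to (\ref{eststep34}), while the convective remainder matches the analogous expression in the limit because $\vr_\epsilon\bu_\epsilon\to\vr\bu$ in $C_w([0,T];L^{\frac{2\beta}{\beta+1}}(\Omega,\R^3))$ and $\bm{{\cal A}}$ is $L^p$-continuous. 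The viscous pairing $\int\tn{S}(\nabla\bu_\epsilon):\nabla\bm{\psi}_\epsilon$ passes to the limit by combining the weak $L^2$-limit of $\nabla\bu_\epsilon$ with the strong limit of $\nabla\bm{{\cal A}}[Z_\epsilon]=\Rr[Z_\epsilon]$ in a suitable $L^r(0,T;L^s(\Omega))$. Finally, the artificial term $\epsilon\int\nabla\vr_\epsilon\cdot\nabla\bu_\epsilon\cdot\bm{\psi}_\epsilon$ is $O(\sqrt{\epsilon})$ by Cauchy--Schwarz and (\ref{eststep34}).

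The genuine obstacle is the convective contribution, whose leading piece reads
$$
\int_0^T\!\!\!\int_\Omega \psi\,\phi\,\vr_\epsilon(\bu_\epsilon)_i(\bu_\epsilon)_j\Rr_{ij}[Z_\epsilon]\,\dxdt.
$$
Its limit cannot be obtained from the strong convergence of $\bu_\epsilon$ in $L^2(0,T;L^q(\Omega))$ alone, since the remaining factor $\vr_\epsilon\Rr_{ij}[Z_\epsilon]$ is a product of two only weakly converging sequences. Here the classical Riesz-commutator trick enters: one rearranges the expression using (\ref{symetrieriesz}) and (\ref{R1}) so that the difference with the candidate limit is identified with a commutator of multiplication by $Z_\epsilon$ with the Riesz operator acting on $\vr_\epsilon\bu_\epsilon$. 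The required compensated compactness is supplied by Lemma \ref{DivCurl} applied to $\bm{U}_\epsilon:=\vr_\epsilon\bu_\epsilon$ and $\bm{V}_\epsilon:=\bm{{\cal A}}[Z_\epsilon]$: the former has divergence precompact in $W^{-1,r}(\Omega)$ by (\ref{contstepep}) and (\ref{eststep34}), the latter is curl--free by construction, and the resulting weak convergence of the scalar product $\bm{U}_\epsilon\cdot\bm{V}_\epsilon$ is precisely what permits the exchange of weak limits in the product. Assembling all contributions yields the claimed identity.
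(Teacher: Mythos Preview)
Your overall plan—test the approximate momentum equation with $\psi\phi\bm{{\cal A}}[1_\Omega Z_\epsilon]$, test the limiting equation with $\psi\phi\bm{{\cal A}}[1_\Omega Z]$, and subtract—is exactly what the paper does (it omits the proof here and carries it out in detail for the analogous Lemma~\ref{effectivestep1}). The handling of the time-derivative term, the $\epsilon$-corrections, and the lower-order pieces carrying $\nabla\phi$ is fine. However, two of your key steps are incorrect and, taken together, they miss the mechanism that makes the identity work.

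\medskip
\textbf{The viscous term.} Your claim that $\nabla\bm{{\cal A}}[Z_\epsilon]=\Rr[Z_\epsilon]$ converges \emph{strongly} in some $L^r(0,T;L^s(\Omega))$ is false: the Riesz operators are merely bounded on $L^p$ and do not improve compactness, so $\Rr_{ij}[Z_\epsilon]$ inherits only the weak convergence of $Z_\epsilon$. Consequently $\int\psi\phi\,\nabla\bu_\epsilon:\Rr[Z_\epsilon]$ is a product of two weakly convergent factors and cannot be passed to the limit directly. The correct treatment (this is how the coefficient $\lambda+2\mu$ arises in the statement) is to use the symmetry (\ref{symetrieriesz}) and (\ref{R1}) to rewrite
\[
\int_\Omega \phi\,\partial_j u^i_\epsilon\,\Rr_{ij}[1_\Omega Z_\epsilon]\,\dx
=\int_\Omega \phi\,\dv\bu_\epsilon\,Z_\epsilon\,\dx + \text{l.o.t.},
\]
so that the $\mu$-part of the stress contributes another $\mu\int\phi\,\dv\bu_\epsilon\,Z_\epsilon$, which together with the $(\lambda+\mu)$-part yields the term $(\lambda+2\mu)\dv\bu_\epsilon\,Z_\epsilon$ that is grouped with the pressure. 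This term is \emph{not} passed to the limit separately; it is one of the two pieces whose joint limit is being identified.

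\medskip
\textbf{The Div--Curl application.} The pair $\bm{U}_\epsilon=\vr_\epsilon\bu_\epsilon$, $\bm{V}_\epsilon=\bm{{\cal A}}[Z_\epsilon]$ is the wrong choice: since $\bm{{\cal A}}[Z_\epsilon]$ converges strongly (as you yourself noted earlier), the product $\bm{U}_\epsilon\cdot\bm{V}_\epsilon$ converges trivially and Lemma~\ref{DivCurl} is not needed there. The genuine obstacle is the commutator-type expression
\[
Z_\epsilon\,\Rr_{ij}\big[(\vr_\epsilon\bu_\epsilon)_j\big]
-(\vr_\epsilon\bu_\epsilon)_j\,\Rr_{ij}[Z_\epsilon],
\]
which involves $\Rr$ (not $\bm{{\cal A}}$) acting on both factors. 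In the paper's argument (for Lemma~\ref{effectivestep1}) this bilinear form is decomposed as $\bm{U}\cdot\bm{V}-\bm{W}\cdot\bm{Z}$ with $\bm{U},\bm{W}$ divergence-free and $\bm{V},\bm{Z}$ gradients; Lemma~\ref{DivCurl} is then applied to each of these pairs at fixed time. Your mention of the ``Riesz-commutator trick'' is the right idea, but the vectors you feed into the Div--Curl lemma do not capture it.
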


The proof of Lemma \ref{effectivestep3} is based on the Div-Curl Lemma of compensated compactness, see Lemma \ref{DivCurl}. We will not present it here, as it is a relatively standard result in the theory of weak solutions to the compressible Navier-Stokes equations; see e.g. \cite{NoSt} for more details. The basic tools for the proof can be found in Section 3.  We shall give more details to the proof of a similar result used in the limit passage $\delta \to 0$, where, moreover, several arguments are more subtle than here.

We conclude this section by showing (\ref{pressurerelation}) and, consequently, strong convergence of the sequence $Z_\epsilon$ in $L^1(\QT)$.

Recall that $Z$ solves (\ref{entstep3limit})  in the sense of renormalized equations, see Lemma \ref{ReSolLDP}. Thus, we take $b(Z) = Z \ln Z$ (see Remark \ref{ReSolExtb}) to get

\begin{equation}\label{strongconvstep1}
\int_0^T \int_\Omega Z \dv_x \bu \, \dxdt = \int_\Omega  Z_{0,\delta} \ln(Z_{0,\delta}) \, \dx- \int_\Omega Z(T) \ln \big(Z(T)\big)\, \dx.
\end{equation}
On the other hand, $Z_\epsilon$ solves (\ref{entstep3}) a.e on $\QT$, in particular,
\begin{equation*}
\partial_t b(Z_\epsilon) + \dv_x( b(Z_\epsilon) \bu_\epsilon) + \big( b'(Z_\epsilon)Z_\epsilon-b(Z_\epsilon)\big) \dv \bu_\epsilon - \epsilon \Delta b(Z_\epsilon) \le 0
\end{equation*}
for any $b$ convex and globally Lipschitz on $\R^+$; whence
\begin{equation*}
\int_0^T \int_\Omega \Big(b'(Z_\epsilon) Z_\epsilon - b(Z_\epsilon)\Big) \dv \bu_\epsilon \, \dxdt \le \int_\Omega b(Z_{0,\delta}) \, \dx- \int_\Omega b\big(Z_\epsilon(T)\big) \, \dx
\end{equation*}
from which we easily deduce
\begin{equation}\label{strongconvstep2}
\int_0^T \int_\Omega Z_\epsilon \dv \bu_\epsilon \, \dxdt \le \int_\Omega Z_{0,\delta} \ln(Z_{0,\delta}) \, \dx- \int_\Omega Z_\epsilon(T) \ln\big(Z_\epsilon(T)\big)\, \dx.
\end{equation}
Note that
$$
\int_{\Omega} Z(T)\ln (Z(T)) \, \dx\leq \liminf_{\epsilon \to 0^+} \int_{\Omega} Z_\epsilon (T) \ln \big(Z_\epsilon(T)\big)\, \dx.
$$
Take two non-decreasing sequences $ \psi_n, \phi_n $ of non-negative functions such that
\begin{equation}
\psi_n \in C_c^\infty(0,T), \psi_n \to 1, \phi_n \in C_c^\infty(\Omega), \phi_n \to 1.
\end{equation}
Lemma  \ref{effectivestep3} implies that
\begin{multline*}
\limsup_{\epsilon \to 0^+} \int_0^T \psi_m \int_\Omega \phi_m (Z_\epsilon^\gamma +\delta Z_\epsilon^\beta) Z_\epsilon \, \dxdt \le \limsup_{\epsilon \to 0^+} \int_0^T \psi_n \int_\Omega \phi_n (Z_\epsilon^\gamma +\delta Z_\epsilon^\beta) Z_\epsilon \, \dxdt \\
\le \lim_{\epsilon \to 0^+} \int_0^T \psi_n \int_\Omega \phi_n \big( Z_\epsilon^\gamma + \delta Z_\epsilon^\beta -(\lambda+2\mu) \dv \bu_\epsilon \big) Z_\epsilon \, \dxdt \\
+ (\lambda+2\mu) \limsup_{\epsilon \to 0^+}  \int_0^T \psi_n \int_\Omega \phi_n  Z_\epsilon \dv \bu_\epsilon \, \dxdt
\le \int_0^T \psi_n \int_\Omega \phi_n ( \Ov{p} -(\lambda+2\mu) \dv_x \bu ) Z \, \dxdt \\
+(\lambda+2\mu) \limsup_{\epsilon \to 0^+} \int_0^T \int_\Omega Z_\epsilon | 1 -\psi_n \phi_n | | \dv \bu_\epsilon | \, \dxdt  +(\lambda+2\mu) \limsup_{\epsilon \to 0^+} \int_0^T \int_\Omega Z_\epsilon \dv \bu_\epsilon \, \dxdt.
\end{multline*}
Using also ($\ref{strongconvstep1}$) and ($\ref{strongconvstep2}$), we observe that
\begin{multline*}
 \limsup_{\epsilon \to 0^+} \int_0^T \psi_m \int_\Omega \phi_m (Z_\epsilon^\gamma +\delta Z_\epsilon^\beta) Z_\epsilon \, \dxdt \\
\le
\int_0^T \int_\Omega \Ov{p} Z \, \dxdt
+ \eta(n) +(\lambda + 2\mu) \Big[ \int_\Omega Z (T) \ln (Z(T)) \, \dx- \limsup_{\epsilon \to 0^+} \int_\Omega Z_\epsilon(T) \ln (Z_\epsilon(T)) \, \dx\Big]
\end{multline*}
for all $m \le n$, where
\begin{equation*}
\eta (n) \to 0~\text{for}~ n \to \infty.
\end{equation*}
Thus we have proved
\begin{equation*}
\limsup_{\epsilon \to 0^+}  \int_0^T \psi_m \int_\Omega \phi_m ( Z_\epsilon^\gamma + \delta Z_\epsilon^\beta)Z_\epsilon \, \dxdt \le \int_0^T \int_\Omega \Ov{p} Z \, \dxdt, \quad \forall m \ge 1.
\end{equation*}
To conclude the proof of (\ref{pressurerelation}), we make use of a (slightly modified) Minty's trick. Since the nonlinearity $P(Z)= Z^\gamma + \delta Z^\beta $ is monotone, we have for any $v \in L^{\beta+1}(\QT)$
\begin{equation*}
\int_0^T \psi_m \int_\Omega \phi_m ( P(Z_\epsilon) - P(v))(Z_\epsilon -v) \, \dxdt \ge 0
\end{equation*}
and, consequently,
\begin{equation*}
\int_0^T \int_\Omega \Ov{p} Z \, \dxdt  + \int_0^T \psi_m \int_\Omega \phi_m P(v)v \, \dxdt - \int_0^T \psi_m  \int_\Omega \phi_m (\Ov{p}v +P(v)Z) \, \dxdt \ge 0.
\end{equation*}
Now, letting $m \to \infty $, we get
\begin{equation*}
\int_0^T \int_\Omega (\Ov{p}-P(v))(Z-v)\, \dxdt \ge 0
\end{equation*}
and the choice $v = Z+ \eta \varphi $, $\eta \to 0$, $ \varphi\in C^\infty_c(\QT) $ arbitrary, yields the desired conclusion
\begin{equation*}
\Ov{p} = Z^\gamma + \delta Z^\beta.
\end{equation*}

To finish the proof of
Proposition \ref{step2ex} we have to show (\ref{eststep28}). To this aim, we use as test function in (\ref{momstep2})
solution to (cf. Lemma \ref{Bogovskii} in Section 3)
$$
\dv \bm{\psi} = Z^\theta - \frac{1}{|\Omega|} \int_{\Omega} Z^{\theta} \dx
$$
with homogeneous Dirichlet boundary conditions, where $\theta > 0$ is a constant.  Due to properties of the Bogovskii operator we may show (the proof is similar to the case of compressible Navier--Stokes equations, see e.g. \cite{NoSt})
$$
\|Z\|_{L^{\gamma+\theta}(\QT)}^{\gamma+\theta}  + \delta \|Z\|^{\beta+\theta}_{L^{\beta+\theta}(\QT)} \leq C
$$
with $\theta \leq \min\{\frac{\gamma}{2}, \frac 23 \gamma -1\}$. Other estimates can be obtained easily.
The proof of Proposition \ref{step2ex} is finished.

\section{Passing to the limit in the artificial pressure term. Proof of Theorem \ref{mainthm2}}\label{s:7}
\label{artif_pressure_limit}
Our next goal is to let $\delta \to 0^+$. We will relax the assumptions on the growth of the pressure and on the regularity of the initial data.  We are again confronted with a missing estimate for the sequence of densities which would guarantee the strong convergence. Additional problems will arise from the fact that the a priori bounds for the density  do not allow us to apply the DiPerna--Lions transport theory, see Lemma \ref{ReSolLDP}. To overcome these difficulties, we will apply to system (\ref{wsstep2}) Feireisl's approach. Accordingly, the solution of  problem ($\ref{wsstep2}$) obtained in Proposition \ref{step2ex} above will be denoted $\vr_\delta,Z_\delta,\bu_\delta$.

\subsection{Limit passage based on a priori estimates}

Using estimates independent of the parameter $\delta$, i.e. (\ref{eststep21})--(\ref{eststep28}), as well as the procedure at the beginning of the previous section we show (see also \cite{NoSt})

\begin{subequations}
\begin{equation} \label{7.3a}
\vr_\delta \to \vr~\text{in}~C_{w}([0,T];L^\gamma (\Omega)),
\end{equation}
\begin{equation} \label{7.3b}
Z_\delta \to Z~\text{in}~C_{w}([0,T];L^\gamma (\Omega)),
\end{equation}
\begin{equation}\label{7.3c}
\bu_\delta \to \bu~\text{weakly in}~L^2(0,T;W^{1,2}_0(\Omega,\R^3)),
\end{equation}
\begin{equation} \label{7.3d}
\vr_\delta \bu_\delta \to \vr \bu ~\text{in}~C_{w}([0,T];L^{\frac {2\gamma}{\gamma+1}} (\Omega,\R^3)),
\end{equation}
\begin{equation} \label{7.3e}
\vr_\delta \bu_\delta \otimes \bu_\delta \to \vr \bu \otimes \bu ~\text{weakly in}~ L^q((0,T)\times \Omega, \R^{3\times 3}) \quad \text{for some}~q>1,
\end{equation}
\begin{equation} \label{7.3f}
\vr_\delta^\gamma   \to \Ov{\vr^\gamma} ~\text{weakly in}~L^{\frac {\gamma +\theta}{\gamma}}((0,T)\times \Omega),
\end{equation}
\begin{equation} \label{7.3g}
Z_\delta^\gamma   \to \Ov{Z^\gamma}  ~\text{weakly in}~L^{\frac {\gamma +\theta}{\gamma}}((0,T)\times \Omega),
\end{equation}
\begin{equation} \label{7.3h}
\delta Z_\delta^\beta   \to 0  ~\text{weakly in}~L^{q}((0,T)\times \Omega), \quad \text{for some}~q>1,
\end{equation}
\end{subequations}
passing to subsequences as the case may be.

Consequently, $\vr,Z, \bu$ satisfy
\begin{equation}\label{contstepfinal}
\partial_t \vr + \dv (\vr \bu) =0~\text{in}~\D'((0,T)\times \R^3) ,
\end{equation}
\begin{equation}\label{entstepfinal}
\partial_t Z + \dv (Z \bu) =0~\text{in}~\D'((0,T)\times \R^3) ,
\end{equation}
\begin{equation} \label{momstepfinal}
\partial_t(\vr \bu) + \dv (\vr \bu \otimes \bu) + \nabla \overline{Z^\gamma} = \mu \Delta \bu + (\mu+\lambda) \nabla \dv \bu~\text{in}~\D'(\QT,\R^3).
\end{equation}
Thus the only thing to complete the proof of Theorem \ref{mainthm2} is to show the strong convergence of $Z_\delta $ in $L^1(\QT)$ which is actually equivalent to identifying  $ \overline{Z^\gamma} = Z^\gamma $.

\subsection{Strong convergence of $Z_\delta$}

Recall that the cut-off functions $T$ and $T_k$ were introduced in (\ref{TX})--(\ref{TkX}).

\subsubsection{Effective viscous flux}
As in Section 6, we need the following auxiliary result:
\begin{lm}\label{effectivestep1}
Let $\vr_\delta,Z_\delta,\bu_\delta$ be the sequence of approximate solutions constructed by means of Proposition \ref{step2ex}. Then
\begin{equation} \label{7.11}
\lim_{\delta \to 0^+}  \int_0^T \psi \int_\Omega \phi \big( Z_\delta^\gamma -(\lambda+2\mu) \dv \bu_\delta\big) T_k(Z_\delta) \, \dxdt =   \int_0^T \psi \int_\Omega \phi  \big( \overline{Z^\gamma} -(\lambda+2\mu) \dv \bu\big) \overline{T_k(Z_\delta)} \, \dxdt
\end{equation}
for any $ \psi \in C_c^\infty((0,T)) $ and $\phi \in C_c^\infty(\Omega)$, passing to subsequences, if necessary.
\end{lm}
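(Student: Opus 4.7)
My plan is to reproduce the classical Lions--Feireisl effective viscous flux identity, now in the artificial pressure limit. The idea is to test the momentum equations, before and after the limit, with suitably chosen vector fields built from $T_k(Z_\delta)$ via the operator $\bm{\mathcal{A}}=\nabla\Delta^{-1}$, and then to identify a convective commutator through compensated compactness.

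\textbf{Step 1 (Renormalization of the $Z$-equation).} Proposition \ref{step2ex} guarantees that $(Z_\delta,\bu_\delta)$ solves (\ref{entstep2}) in the renormalized sense on $(0,T)\times\R^3$ after extension by zero. Applying this with $b=T_k$ yields
\begin{equation*}
\partial_t T_k(Z_\delta)+\dv(T_k(Z_\delta)\bu_\delta)+\bigl(T_k'(Z_\delta)Z_\delta-T_k(Z_\delta)\bigr)\dv\bu_\delta=0.
\end{equation*}
Since $T_k(Z_\delta)$ is uniformly bounded in $L^\infty$ and $\dv\bu_\delta$ in $L^2$, I pass to a subsequence to obtain an analogous equation for the weak limits $\overline{T_k(Z)}$ and $\overline{(T_k'(Z)Z-T_k(Z))\dv\bu}$.

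\textbf{Step 2 (Test functions).} With $\psi\in C_c^\infty((0,T))$ and $\phi\in C_c^\infty(\Omega)$, I set
\begin{equation*}
\bm{\psi}_\delta(t,x)=\psi(t)\phi(x)\bm{\mathcal{A}}[T_k(Z_\delta)], \qquad \bm{\psi}(t,x)=\psi(t)\phi(x)\bm{\mathcal{A}}[\,\overline{T_k(Z)}\,].
\end{equation*}
I plug $\bm{\psi}_\delta$ into (\ref{momstep2delta}) and $\bm{\psi}$ into (\ref{momstepfinal}), using $\sum_i\partial_i\mathcal{A}_i[\cdot]=\mathrm{Id}$, the commutation property (\ref{rieszpartial}), and the evolution equations from Step 1 to eliminate the time derivatives of $\bm{\psi}_\delta,\bm{\psi}$. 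After integration by parts, on the approximate side the pressure contributes $\int\psi\phi(Z_\delta^\gamma+\delta Z_\delta^\beta)T_k(Z_\delta)$, the viscous term contributes $(\lambda+2\mu)\int\psi\phi\,T_k(Z_\delta)\dv\bu_\delta$ plus remainders involving $\nabla\phi$, and the convective term becomes $\int\psi\phi\,\vr_\delta u_\delta^i u_\delta^j \Rr_{ij}[T_k(Z_\delta)]$ minus a term $\int\psi\phi\,\vr_\delta u_\delta^i T_k(Z_\delta) u_\delta^i$ coming from the $\sum_i\partial_i\mathcal{A}_i=\mathrm{Id}$ identity. An analogous identity holds for the limit.

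\textbf{Step 3 (Main obstacle: the convective commutator).} Subtracting the two identities reduces everything to showing
\begin{equation*}
\int_0^T\!\psi\int_\Omega\phi\,\vr_\delta u_\delta^i\Bigl(T_k(Z_\delta)\delta_{ij}-\Rr_{ij}[T_k(Z_\delta)]\Bigr)u_\delta^j\,\dxdt \longrightarrow \int_0^T\!\psi\int_\Omega\phi\,\vr u^i\Bigl(\overline{T_k(Z)}\delta_{ij}-\Rr_{ij}[\,\overline{T_k(Z)}\,]\Bigr)u^j\,\dxdt,
\end{equation*}
plus an analogous commutator between $\vr_\delta\bu_\delta$ and $\bm{\mathcal{A}}$. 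This is exactly the compensated-compactness step: take $\bm{U}_\delta=\vr_\delta\bu_\delta$, whose divergence equals $-\partial_t\vr_\delta$ and is precompact in a suitable $W^{-1,r}$ thanks to (\ref{contstep2}) and (\ref{eststep24}), and take $\bm{V}_\delta$ with $j$-th component $T_k(Z_\delta)\delta_{ij}-\Rr_{ij}[T_k(Z_\delta)]$, whose curl vanishes distributionally since the Riesz piece comes from a gradient. Lemma \ref{DivCurl} then yields weak convergence of the scalar product $\bm{U}_\delta\cdot\bm{V}_\delta$ against the localizer $\psi\phi u_\delta^j$; the factor $u_\delta^j$ is dealt with via strong convergence of $\bu_\delta\to\bu$ in $L^2(0,T;L^q(\Omega))$ for $q<6$, which follows from (\ref{7.3c}) and Rellich--Kondrachov.

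\textbf{Step 4 (Conclusion).} The remaining terms in the identities of Step 2 (derivatives of the cut-offs $\phi,\psi$, the artificial pressure piece $\delta Z_\delta^\beta$ which vanishes by (\ref{7.3h}), and lower-order commutators involving $\bm{\mathcal{A}}$ and $\vr_\delta\bu_\delta$) converge trivially by (\ref{7.3a})--(\ref{7.3h}) together with $\|T_k(Z_\delta)\|_{L^\infty}\le 2k$ and continuity of $\bm{\mathcal{A}}$ from $L^p$ to $W^{1,p}$. Inserting the limit of Step 3 into the subtracted identity of Step 2 produces (\ref{7.11}). The genuine difficulty is Step 3, where the Riesz-transform structure combines with the renormalized continuity equation to bypass the missing strong convergence of $Z_\delta$.
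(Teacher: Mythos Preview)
Your overall architecture (test the two momentum equations with $\psi\phi\bm{\mathcal A}[\cdot]$, subtract, and reduce to a convective commutator handled by Div--Curl) matches the paper. The gap is in Step~3, and it is a real one.

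\textbf{You do not have strong convergence of $\bu_\delta$.} The claim that $\bu_\delta\to\bu$ strongly in $L^2(0,T;L^q(\Omega))$ for $q<6$ ``by (\ref{7.3c}) and Rellich--Kondrachov'' is false: Rellich--Kondrachov is a spatial compactness statement, and to upgrade weak $L^2(0,T;W^{1,2}_0)$ convergence to strong $L^2(0,T;L^q)$ you would need Aubin--Lions, hence a bound on $\partial_t\bu_\delta$ in some negative space. No such bound is available (only $\partial_t(\vr_\delta\bu_\delta)$ is controlled). In compressible Navier--Stokes the velocity converges only weakly.

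\textbf{How the paper closes this step.} The paper does not try to make $\bu_\delta$ strong. Instead it rewrites the commutator at each fixed time as a bilinear form $[\bm v,\bm w]=\bm U\cdot\bm V-\bm W\cdot\bm Z$ with $\bm U,\bm W$ genuinely divergence-free and $\bm V,\bm Z$ gradients, so that Lemma~\ref{DivCurl} applies in space for every $t$. This gives, for each $t$, weak convergence in $L^s(\Omega)$ of
\[
T_k(Z_\delta)\,\bm{\mathcal A}[1_\Omega\dv(\vr_\delta\bu_\delta)]-(\vr_\delta\bu_\delta)\cdot\bm{\mathcal R}[1_\Omega T_k(Z_\delta)]
\]
with $s<\frac{2\gamma}{\gamma+1}$. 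Since $\gamma>\frac32$ one has $s>\frac65$, so $L^s(\Omega)\hookrightarrow\hookrightarrow W^{-1,2}(\Omega)$, and the $C_w([0,T];L^s)$ convergence yields strong convergence in $L^q(0,T;W^{-1,2}(\Omega))$. It is the \emph{commutator} that becomes strong, and it is then paired with $\bu_\delta\rightharpoonup\bu$ in $L^2(0,T;W^{1,2}_0)$ via the duality $W^{-1,2}\times W^{1,2}_0$. That is the correct replacement for your ``strong $\bu_\delta$'' step.

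A secondary issue: your Div--Curl setup with $\bm U_\delta=\vr_\delta\bu_\delta$ and ``$\dv\bm U_\delta=-\partial_t\vr_\delta$ precompact in $W^{-1,r}$'' does not fit Lemma~\ref{DivCurl} as stated, which is purely spatial. The quantity $\partial_t\vr_\delta$ is not, at fixed $t$, a function in a negative Sobolev space over $\Omega$; the paper avoids this by choosing vectors whose spatial divergence is exactly zero.
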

\begin{proof}
Recall that we have for $\delta>0$ the renormalized form of equation (\ref{entstep2})
\begin{equation} \label{8.2bbb}
\partial_t(T_k(Z_\delta))+\dv(T_k(Z_\delta)\bu_\delta)+(Z_\delta T'_k(Z_\delta)-T_k(Z_\delta))\dv\bu_\delta=0,
\end{equation}
however, for the limit we only have
\begin{equation} \label{8.3cc}
\partial_t(\Ov{T_k(Z)})+\dv(\Ov{T_k(Z)}\bu)+\Ov{(Z T'_k(Z)-T_k(Z))\dv\bu}=0,
\end{equation}
both in the sense of distributions.

We use as the test function  in the approximated momentum equation (\ref{momstep2})
the function
\[
 \bm{\varphi}_\delta= \psi \phi\nabla \Delta^{-1}[1_\Omega T_k(Z_\delta)] = \psi \phi \bm{{\cal A}} [1_\Omega T_k(Z_\delta)],\,k\in \tn{N},
\]
and for the limit equation (\ref{momstepfinal})
the test function
\[
 \bm{\varphi}=\psi \phi\nabla \Delta^{-1}[1_\Omega\Ov{T_k(Z)}] = \psi \phi \bm{{\cal A}} [1_\Omega\Ov{T_k(Z)}] ,\,k\in \tn{N}.
\]
Here, $\psi \in C^\infty_c(0,\infty)$ and $\phi \in C^\infty_c(\Omega)$, for the definition of $\bm{{\cal A}}$  see  Section 3.
Note that thanks to properties of $\psi$ and $\phi$ we indeed extend our domain from $\Omega$ onto the whole space $\R^3$. It allows then
to work with $\bm{{\cal A}}$ defined in terms of Fourier multipliers.

We get
\begin{equation} \label{ws27a}
\lim_{\delta \to 0^+} \int_0^T \psi \int_\Omega \Big(\phi Z_\delta^\gamma  T_k(Z_\delta) + Z_\delta^\gamma \nabla \phi \cdot \bm{{\cal A}} [1_\Omega T_k(\vr_\delta)]
\Big) \, \dxdt
\end{equation}
\[
- \lim_{\delta \to 0^+} \int_0^T \psi \int_\Omega \phi \Big( \mu  \nabla \bu_\delta: \bm{{\cal R}}[1_\Omega T_k(Z_\delta)] +
(\lambda + \mu) \dv \bu_\delta T_k(Z_\delta) \Big)  \, \dxdt
\]
\[
- \lim_{\delta \to 0^+} \int_0^T \psi \int_\Omega \Big( \mu  \nabla \bu_\delta \cdot \nabla \phi \cdot \bm{{\cal A}} [1_\Omega T_k(Z_\delta)]
+
(\lambda + \mu) \dv \bu_\delta \nabla \phi \cdot \bm{{\cal A}} [1_\Omega T_k(Z_\delta)] \Big)  \, \dxdt
\]
\[
= \int_0^T \psi \int_\Omega \Big(\phi \Ov{Z^\gamma} \ \Ov{T_k(Z)} - \Ov{Z^\gamma} \nabla \phi \cdot \bm{{\cal A}} [1_\Omega\Ov{T_k(Z)}]
\Big) \, \dxdt
\]
\[
- \int_0^T \psi \int_\Omega \phi \Big( \mu  \nabla \bu: \bm{{\cal R}}[1_\Omega \Ov{T_k(Z)}] +
(\lambda + \mu) \dv \bu \Ov{T_k(Z)} \Big)  \, \dxdt
\]
\[
-  \int_0^T \psi \int_\Omega \Big( \mu  \nabla \bu \cdot \nabla \phi \cdot \bm{{\cal A}}[1_\Omega \Ov{T_k(Z)}]
+
(\lambda + \mu) \dv \bu \nabla \phi \cdot \bm{{\cal A}}[1_\Omega \Ov{T_k(Z)}] \Big)  \, \dxdt
\]
\[
+ \lim_{\delta \to 0^+} \int_0^T \psi \int_\Omega \Big( \phi \vr_\delta \bu_\delta \cdot \bm{{\cal A}} [ \dv (T_k(Z_\delta)\bu_\delta) + (Z_\delta T_k'(Z_\delta)-T_k(Z_\delta))\dv \bu_\delta ] \\
\]
\[
-
\vr_\delta (\bu_\delta \otimes \bu_\delta) : \nabla \left( \phi \bm{{\cal A}}[1_\Omega T_k(Z_\delta)] \right) \Big)
  \, \dxdt
\]
\[
- \int_0^T \psi \int_\Omega \Big( \phi \vr \bu \cdot \bm{{\cal A}} [ \dv ( \Ov{T_k(Z)}\bu) + \Ov{(Z T_k'(Z)-T_k(Z))\dv\bu}  ]
\]
\[
- \vr (\bu \otimes \bu) : \nabla \left( \phi \bm{{\cal A}}[1_\Omega \Ov{T_k(Z)}] \right) \Big)
  \, \dxdt
\]
\[
-\lim_{\delta \to 0^+} \int_0^T \partial_t \psi \int_\Omega \phi \vr_\delta \bu_\delta\cdot \bm{{\cal A}} (T_k(Z_\delta)) \, \dxdt
 + \int_0^T \partial_t \psi \int_\Omega \phi \vr \bu\cdot \bm{{\cal A}} [\Ov{T_k(Z)}] \, \dxdt.
\]

We have
\[
\int_\Omega \phi \nabla \bu_\delta : \bm{{\cal R}} [1_\Omega T_k(Z_\delta)] \, \dx=
\int_\Omega \phi \sum_{i,j=1}^3 \left( \partial_{x_j} u^i_\delta {\cal R}_{ij} [1_\Omega T_k(Z_\delta)] \right)
\dx
\]
\[
= \int_\Omega \sum_{i,j=1}^3 \left( \partial_{x_j} (\phi u^i_\delta) {\cal R}_{ij} [1_\Omega T_k(Z_\delta)] \right)\, \dx\\
- \int_\Omega \sum_{i,j=1}^3 \left( \partial_{x_j} \phi  u^i_\delta {\cal R}_{ij} [1_\Omega T_k(Z_\delta)] \right)\dx
\]
\[
= \int_\Omega \phi \dv \bu_\delta T_k(Z_\delta)\, \dx+ \int_\Omega \nabla \phi \cdot \bu_\delta T_k(Z_\delta) \dx
- \int_\Omega \sum_{i,j=1}^3 \left( \partial_{x_j} \phi  u^i_\delta {\cal R}_{ij} [1_\Omega T_k(Z_\delta)] \right)\dx.
\]

Consequently, going back to (\ref{ws27a}) and dropping the compact terms, where we use
\[
{\cal A} [ 1_\Omega T_k(\vr_\delta) ] \to {\cal A} [1_\Omega \overline{T_k(\vr)}] \ \mbox{in}\ C([0,T] \times \Ov{\Omega}),
\]
we obtain
\begin{equation} \label{ws28a}
\lim_{\delta \to 0^+} \int_0^T \psi \int_\Omega \phi \Big( Z_\delta^\gamma T_k(Z_\delta) - (\lambda + 2 \mu) \dv \bu_\delta T_k(Z_\delta) \Big)
\, \dxdt
\end{equation}
\[
-\int_0^T \psi \int_\Omega \phi \Big( \Ov{Z^\gamma} \ \Ov{T_k(Z)} - (\lambda + 2 \mu) \dv \bu \Ov{T_k(Z)} \Big)
\, \dxdt
\]
\[
= \lim_{\delta \to 0^+} \int_0^T \psi \int_\Omega \Big( \vr_\delta \bu_\delta \cdot \bm{{\cal A}} [ \dv (T_k(Z_\delta) \bu_\delta)  ]
-
\vr_\delta (\bu_\delta \otimes \bu_\delta) : \bm{{\cal R}} [1_\Omega T_k(Z_\delta)] \Big)
  \, \dxdt
\]
\[
- \int_0^T \psi \int_\Omega \Big( \phi \vr \bu \cdot \bm{{\cal A}} [ \dv (\Ov{T_k(Z)} \bu) ] -
\vr (\bu \otimes \bu) : \bm{{\cal R}} [1_\Omega \Ov{T_k(Z)}]  \Big)
  \, \dxdt.
\]

Our goal is to show that the right-hand side of (\ref{ws28a}) vanishes. We write
\[
\int_{\Omega}\phi\Big[ \vr_\delta \bu_\delta \cdot \bm{{\cal A}} [ 1_{\Omega}\dv (T_k(Z_\delta) \bu_\delta) ] -
\vr_\delta (\bu_\delta \otimes \bu_\delta) : \bm{{\cal R}} [1_\Omega T_k(Z_\delta)]\Big]\dx
\]
\[
= \int_{\Omega} \phi \bu_\delta \cdot \Big[ T_{k}(Z_\delta) \bm{{\cal A}} [ \dv (1_{\Omega}\vr_\delta \bu_\delta) ] - \vr_\delta \bu_\delta \cdot \bm{{\cal R}} [1_\Omega T_k(Z_\delta)]\Big] \, \dx+ l.o.t.,
\]
where l.o.t. denotes lower order terms (with derivatives on $\phi$) and appear due to the integration by parts in the first term on the left-hand side.
We consider the bilinear form
\[
[\bm{v}, \bm{w}] = \sum_{i,j=1}^3 \Big( v^i \mathcal{R}_{ij} [w^j] - w^i \mathcal{R}_{ij} [v^j] \Big),
\]
where
\[
\bm{v} = \bm{v}(Z) = (T_k(Z), T_k(Z), T_k(Z)), \qquad \bm{w}= \bm{w}(\vr, \bu) = \vr \bu.
\]
We may write
\[
\sum_{i,j=1}^3 \Big( v^i \mathcal{R}_{ij} [w^j] - w^i \mathcal{R}_{ij} [v^j] \Big)
\]
\[
=\sum_{i,j=1}^3 \Big( (v^i - \mathcal{R}_{ij}[v^j] )  \mathcal{R}_{ij} [w^j] - ( w^i -
\mathcal{R}_{ij} [w^j] ) \mathcal{R}_{ij} [v^j] \Big)
= \bm{U} \cdot \bm{V} - \bm{W} \cdot \bm{Z},
\]
where
\[
U^i = \sum_{j=1}^3 (v^i - \mathcal{R}_{ij}[v^j] ),\
W^i = \sum_{j=1}^3 (w^i - \mathcal{R}_{ij}[w^j] ), \ \dv \bm{U} = \dv \bm{W} = 0,
\]
and
\[
V^i= \partial_{x_i} \left( \sum_{j=1}^3 \Delta^{-1} \partial_{x^j} w^j \right),\
Z^i= \partial_{x_i} \left( \sum_{j=1}^3 \Delta^{-1} \partial_{x^j} v^j \right),\ i=1,2,3.
\]

Therefore we may apply the Div-Curl lemma
(Lemma \ref{DivCurl}) and using
\[
T_k(Z_\delta) \to \Ov{T_k(Z)} \ \mbox{in}\  C_{\rm weak}([0,T]; L^q (\Omega)),\ 1\leq q<\infty,
\]
\[
\vr_\delta \bu_\delta \to \vr \bu \ \mbox{in}\ C_{\rm weak} ([0,T]; L^{2\gamma/(\gamma + 1)}(\Omega;\R^3)),
\]
we conclude that
\begin{equation}\label{ws29a}
T_k(Z_\delta) (t, \cdot) \bm{{\cal A}} [1_\Omega \dv(\vr_\delta \bu_\delta)(t,\cdot) ] - (\vr_\delta \bu_\delta)(t, \cdot) \cdot \bm{{\cal R}} [1_\Omega T_k(Z_\delta)(t, \cdot) ]
\end{equation}
\[
\to
\]
\[
\Ov{T_k(Z)}(t,\cdot) \bm{{\cal A}} [1_{\Omega} \dv (\vr \bu)(t, \cdot) ] - (\vr \bu)(t, \cdot) \cdot \bm{{\cal R}} [1_\Omega \Ov{T_k(Z)}(t, \cdot) ]
\]
\[
\mbox{weakly in}\ L^s(\Omega; \R^3) \ \mbox{for all}\ t \in [0,T],
\]
with
\[
s < \frac{2 \gamma}{\gamma + 1}.
\]
Note that $s> \frac{6}{5}$ since $\gamma > \frac 32$ and thus
the convergence in (\ref{ws29a}) takes place in the space
\[
L^q(0,T; W^{-1,2}(\Omega)) \ \mbox{for any}\ 1 \leq q < \infty;
\]
going back to (\ref{ws28a}), we have

\begin{equation}\label{ws30a}
\lim_{\delta \to 0^+} \int_0^T \psi \int_\Omega \phi \Big( Z_\delta^\gamma T_k(Z_\delta) - (\lambda + 2 \mu) \dv \bu_\delta T_k(Z_\delta) \Big)
\, \dxdt
\end{equation}
\[
=\int_0^T \psi \int_\Omega \phi \Big( \Ov{Z^\gamma} \ \Ov{T_k(Z)} - (\lambda + 2 \mu) \dv \bu \Ov{T_k(Z)} \Big)
\, \dxdt.
\]
\end{proof}

{
\begin{rmq}\label{gen_eff_visc_flux}
    Observe that an analogue of equality (\ref{8.2bbb}) holds also when we consider $\sigma_{\delta}$ instead of $T_k(Z_\delta)$, where $\sigma_\delta$ are uniformly essentially bounded and satisfy
    \[
        \partial_t \sigma_\delta + \dv(\sigma_\delta \bu_\delta) = f_\delta
    \]
    where $f_\delta$ are bounded in $L^2((0,T)\times\Omega)$ (see \cite{michalek} and \cite{plotsok}). This generalization will be necessary in Section \ref{equiv_formulations}.
\end{rmq}
}
\subsubsection{Oscillation defect measure and renormalized solutions}

The main results of this part are essentially taken over from \cite{FeCMUC}:
\begin{lm}\label{odm}
There exists a constant $c$ independent of $k$ such that
\begin{equation}
\limsup_{\delta \to 0^+} \| T_k(Z_\delta) - T_k(Z) \|_{L^{\gamma+1}(\QT)} \le c
\end{equation}
with $c$ independent of $k \ge 1$.
\end{lm}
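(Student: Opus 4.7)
The plan is to combine the elementary pointwise inequality
\[
|T_k(a) - T_k(b)|^{\gamma+1} \le (a^\gamma - b^\gamma)\bigl(T_k(a) - T_k(b)\bigr), \qquad a, b \ge 0, \ \gamma \ge 1,
\]
(valid because $T_k$ is $1$-Lipschitz and non-decreasing, and because $a^\gamma - b^\gamma \ge (a-b)^\gamma$ for $a \ge b \ge 0$ when $\gamma \ge 1$) with the effective viscous flux identity of Lemma~\ref{effectivestep1}. Substituting $a = Z_\delta$, $b = Z$, integrating on $\QT$ and taking $\limsup_{\delta \to 0^+}$, the right-hand side splits into four cross products: three of them converge by weak--strong pairing ($T_k(Z) \in L^\infty$ and $Z^\gamma \in L^{(\gamma+\theta)/\gamma}$ are fixed test functions against the weakly converging factors), while $Z_\delta^\gamma T_k(Z_\delta)$ admits a weak limit $\overline{Z^\gamma T_k(Z)}$ along a subsequence. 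Algebraic rearrangement then yields
\[
\limsup_{\delta \to 0^+} \|T_k(Z_\delta) - T_k(Z)\|_{L^{\gamma+1}(\QT)}^{\gamma+1} \le \int_0^T\!\!\int_\Omega \bigl(\overline{Z^\gamma T_k(Z)} - \overline{Z^\gamma}\,\overline{T_k(Z)}\bigr) \, \dxdt,
\]
since the remainder $\int_0^T\!\!\int_\Omega (\overline{Z^\gamma} - Z^\gamma)\bigl(\overline{T_k(Z)} - T_k(Z)\bigr)\,\dxdt$ is nonpositive: convexity of $z \mapsto z^\gamma$ forces $\overline{Z^\gamma} \ge Z^\gamma$, whereas concavity of $T_k$ forces $\overline{T_k(Z)} \le T_k(Z)$.

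Next, Lemma~\ref{effectivestep1} (applied with cut-offs $\psi_n \nearrow 1$, $\phi_n \nearrow 1$, justified since all integrands lie in $L^1(\QT)$ with uniform-in-$\delta$ bounds) identifies the right-hand side as
\[
(\lambda + 2\mu)\int_0^T\!\!\int_\Omega \bigl(\overline{T_k(Z)\,\dv \bu} - \overline{T_k(Z)}\,\dv \bu\bigr) \, \dxdt = (\lambda + 2\mu) \lim_{\delta \to 0^+}\!\int_0^T\!\!\int_\Omega \bigl(T_k(Z_\delta) - \overline{T_k(Z)}\bigr)\,\dv \bu_\delta \, \dxdt.
\]
Cauchy--Schwarz together with the uniform bound $\|\dv \bu_\delta\|_{L^2(\QT)} \le C$ reduces the task to controlling $\|T_k(Z_\delta) - \overline{T_k(Z)}\|_{L^2(\QT)}$, which I interpolate between $L^1$ and $L^{\gamma+1}$ with exponent $\alpha = \tfrac{\gamma+1}{2\gamma} \in (\tfrac 12, 1]$.

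The crucial observation — and the step where $k$-independence is secured — is that $T_k(z) \le z$ (a consequence of $T$ being concave with $T(0)=0$ and $T'(0^+)=1$), hence $\|T_k(Z_\delta)\|_{L^1(\QT)} \le C\|Z_\delta\|_{L^\infty(0,T;L^\gamma(\Omega))}$ is bounded uniformly in both $\delta$ and $k$. Combined with weak lower semi-continuity giving $\|T_k(Z) - \overline{T_k(Z)}\|_{L^{\gamma+1}} \le \liminf_\delta \|T_k(Z_\delta) - T_k(Z)\|_{L^{\gamma+1}}$, and hence $\limsup_\delta \|T_k(Z_\delta)-\overline{T_k(Z)}\|_{L^{\gamma+1}} \le 2A_k$ with $A_k$ the target limsup, the above chain collapses to a self-referential inequality $A_k^{\gamma+1} \le C\, A_k^\alpha$. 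Since $\gamma+1>\alpha$ this forces $A_k \le C^{1/(\gamma+1-\alpha)}$ independently of $k$, which is the claim. The main obstacle I anticipate is precisely pinning down the $k$-independence: without the concavity-induced bound $T_k(z)\le z$, the $L^1$ interpolation endpoint would scale like $k$ and the argument would collapse.
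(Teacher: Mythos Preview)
Your argument is correct and matches the paper's almost step for step: the pointwise inequality, the convexity/concavity sign argument yielding
\[
\limsup_{\delta\to 0^+}\|T_k(Z_\delta)-T_k(Z)\|_{L^{\gamma+1}(\QT)}^{\gamma+1}\le \int_0^T\!\!\int_\Omega\bigl(\overline{Z^\gamma T_k(Z)}-\overline{Z^\gamma}\,\overline{T_k(Z)}\bigr)\,\dxdt,
\]
and the appeal to the effective viscous flux identity are all exactly as in the paper. The only divergence is in how you close the self-referential inequality. You interpolate $\|T_k(Z_\delta)-\overline{T_k(Z)}\|_{L^2}$ between $L^1$ and $L^{\gamma+1}$, which forces the extra observation $T_k(z)\le z$ to make the $L^1$ endpoint uniform in $k$. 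The paper bypasses this entirely: since $\gamma>1$ gives $\gamma+1>2$ and $\QT$ has finite measure, H\"older yields $\|\cdot\|_{L^2(\QT)}\le C(|\QT|)\|\cdot\|_{L^{\gamma+1}(\QT)}$ directly, so one obtains $A_k^{\gamma+1}\le C A_k$ and hence $A_k\le C^{1/\gamma}$ with no $L^1$ consideration at all. The ``main obstacle'' you anticipated is therefore self-created by the interpolation detour; with the simpler H\"older step the $k$-independence is automatic.
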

\begin{proof}
One has
\begin{multline}\label{odmstep1}
\limsup_{\delta \to 0^+} \int_0^T \int_\Omega \Big(Z_\delta^\gamma T_k(Z_\delta) - \overline{Z^\gamma} \ \overline{T_k(Z)}\Big) \, \dxdt = \limsup_{\delta \to 0^+} \int_0^T \int_\Omega (Z_\delta^\gamma -Z^\gamma)(T_k(Z_\delta)-T_k(Z)) \, \dxdt \\
+ \int_0^T \int_\Omega (\overline{Z^\gamma} -Z^\gamma)( T_k(Z) -\overline{T_k(Z)}) \, \dxdt
\ge \limsup_{\delta \to 0^+} \int_0^T \int_\Omega (Z_\delta^\gamma -Z^\gamma)(T_k(Z_\delta)-T_k(Z)) \, \dxdt \\
\ge \limsup_{\delta \to 0^+}  \int_0^T \int_\Omega | T_k(Z_\delta)-T_k(Z) |^{\gamma+1} \, \dxdt,
\end{multline}
as $ Z \mapsto Z^\gamma$ is convex, $T_k$ concave on $ \R_+$, and
\begin{equation} \label{EST(i)}
(z^\gamma -y^\gamma)(T_k(z)-T_k(y)) \ge | T_k(z)-T_k(y) |^{\gamma+1}
\end{equation}
for all $z,y \ge 0$.
Hence,
\begin{equation} \label{EST(ii)}
\limsup_{\delta \to 0^+} \int_0^T \int_\Omega |T_k(Z_\delta)-T_k(Z)|^{\gamma +1} \, \dxdt \leq \int_0^T \int_\Omega (\Ov{Z^\gamma T_k(Z)}-\Ov{Z^\gamma} \ \Ov{T_k(Z)}) \, \dxdt.
\end{equation}
On the other hand,
\begin{multline}\label{odmstep2}
\limsup_{\delta \to 0^+} \int_0^T \int_\Omega \dv \bu_\delta \big(T_k(Z_\delta) - \overline{T_k(Z)}\big) \, \dxdt  \\
= \limsup_{\delta \to 0^+} \int_0^T \int_\Omega \big( T_k(Z_\delta) -T_k(Z) +T_k(Z) - \overline{T_k(Z)} \big) \dv \bu_\delta \, \dxdt \\
\le 2 \sup_{\delta>0} \| \dv \bu_\delta \|_{L^2(\QT)} \limsup_{\delta \to 0^+} \| T_k(Z_\delta) -T_k(Z) \|_{L^2(\QT)}.
\end{multline}
Relations (\ref{EST(i)}), (\ref{EST(ii)}) combined with Lemma \ref{effectivestep1} yield the desired conclusion.
\end{proof}

Using the result of Lemma \ref{odm} one has  the following crucial assertion (see Lemma \ref{ReSolFe}):

\begin{lm}\label{renormalizedfinal}
The limit functions $(Z,\bu)$  solve (\ref{content1}) in the sense of renormalized solutions, i.e.,
\begin{equation}
\partial_t b(Z) + \dv (b(Z) \bu) + ((b'(Z)Z-b(Z)) \dv \bu = 0
\end{equation}
holds in $\D'((0,T) \times \R^3)$ for any $b \in C^1(\R)$ satisfying (\ref{regb}) provided $(Z,\bu)$ are extended by
zero outside $\Omega$.
\end{lm}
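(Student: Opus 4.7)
The plan is to invoke Lemma \ref{ReSolFe} directly: that result is precisely Feireisl's criterion for upgrading a weak limit of renormalized solutions of the continuity equation to a renormalized solution, provided an oscillation defect measure of order strictly greater than two is under control. All of the essential analytic work has already been done in the preceding lemmas of this subsection, and what remains is to verify the hypotheses and read off the conclusion.

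Concretely, I would proceed in three short checks. First, by Proposition \ref{step2ex}, each approximate pair $(Z_\delta, \bu_\delta)$, extended by zero outside $\Omega$, is a renormalized solution of (\ref{content1}) on $(0,T) \times \R^3$, so the input sequence has the right structure. Second, the required weak convergences hold: $Z_\delta \to Z$ weakly in $L^1((0,T)\times \Omega)$ (in fact weakly in $L^{(\gamma+\theta)/\gamma}$ by (\ref{7.3g})), and $\bu_\delta \to \bu$ weakly in $L^2(0,T; W^{1,2}_0(\Omega,\R^3))$ by (\ref{7.3c}). Third, the critical oscillation defect bound
\[
\operatorname{osc}_{\gamma+1}(Z_\delta - Z) \le c < \infty
\]
is exactly the content of Lemma \ref{odm}; since the standing assumption of Theorem \ref{mainthm2} is $\gamma > 3/2$, the exponent $q = \gamma + 1 > 5/2 > 2$ meets the requirement $q > 2$ needed by Lemma \ref{ReSolFe}. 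Applying that lemma yields that $(Z, \bu)$ is a renormalized solution of (\ref{content1}) on $(0,T) \times \R^3$, which is the claim. The admissible class of renormalizing functions $b$ is (\ref{regb}), as stated in Definition \ref{renor_cont}.

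At this stage I do not expect any genuine obstacle: the hard nonlinear analysis — the effective viscous flux identity of Lemma \ref{effectivestep1} and its use to control the growth of $\|T_k(Z_\delta) - T_k(Z)\|_{L^{\gamma+1}}$ uniformly in $k$ in Lemma \ref{odm} — has already been carried out, and the lower bound $\gamma > 3/2$ ensures that the resulting oscillation defect lives in an $L^q$ space with $q > 2$, which is precisely the threshold required by the Feireisl criterion. The statement is therefore essentially the packaging of these two preceding lemmas through Lemma \ref{ReSolFe}.
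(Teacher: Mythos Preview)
Your proposal is correct and matches the paper's own treatment: the paper simply states that Lemma \ref{renormalizedfinal} follows from Lemma \ref{odm} via Lemma \ref{ReSolFe}, and you have spelled out exactly that verification. One trivial slip: the weak convergence of $Z_\delta$ itself is (\ref{7.3b}) (or the $L^{\gamma+\theta}$ bound from (\ref{eststep28})), not (\ref{7.3g}), which concerns $Z_\delta^\gamma$; this does not affect the argument.
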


\subsubsection{Strong convergence of the density}

We are going to complete the proof of Theorem \ref{mainthm2}. To this end, we introduce a
family of functions $(L_k)_{k \ge 1}$:
\begin{equation*}
L_k(z) =  z \int_1^z \frac{T_k(s)}{s^2} \ \rm{d }s .
\end{equation*}
Note that $L_k$ is convex for any $k \geq 1$ and
\begin{equation}\label{testbfinal}
Z L_k'(Z) -L_k(Z) =  T_k(Z).
\end{equation}
We can use the fact that $(Z_\delta,\bu_\delta)$ are renormalized solutions of (\ref{entstep2}) to deduce
\begin{equation}\label{strongcvstep10}
\partial_t L_k(Z_\delta) + \dv\big( L_k(Z_\delta) \bu_\delta\big) + T_k(Z_\delta) \dv \bu_\delta = 0
\end{equation}
in $ \D'((0,T)\times \R^3)$ with $Z_\delta$, $\bu_\delta$ extended by zero outside of $\Omega$.
Similarly, by virtue of (\ref{entstepfinal}) and Lemma \ref{renormalizedfinal} (as above, we may justify the use of $L_k(\cdot)$ by density argument)
\begin{equation}\label{strongcvstep11}
\partial_t L_k(Z) + \dv\big( L_k(Z) \bu\big) + T_k(Z) \dv \bu = 0
\end{equation}
in $ \D'((0,T)\times \R^3)$.

In view of (\ref{strongcvstep10}), we have
\begin{equation}\label{weakconvfinal}
L_k(Z_\delta) \to \overline{L_k(Z)}~\text{in}~C_w([0,T];L^q(\Omega))
\end{equation}
for all $1\leq q<\infty$. Hence (\ref{strongcvstep10}) yields
\begin{equation} \label{7.27}
\partial_t \Ov{L_k(Z)} + \dv(\Ov{ L_k(Z)} \bu) + \Ov{T_k(Z) \dv \bu} = 0
\end{equation}
in $ \D'((0,T)\times \R^3)$. Therefore, (\ref{strongcvstep11}) and (\ref{7.27}) imply
$$
\int_\Omega \Big(\Ov{L_k(Z(T))} - L_k(Z(T))\Big) \, \dx= \int_0^T \int_\Omega \Big(T_k(Z) \dv \bu - \Ov{T_k(Z) \dv\bu} \Big) \, \dxdt.
$$
Due to convexity  of $L_k(\cdot)$ we have
\begin{multline*}
0 \leq \int_0^T \int_\Omega \Big(T_k(Z) \dv \bu - \Ov{T_k(Z) \dv\bu} \Big) \, \dxdt   \\
\leq \int_0^T \int_\Omega \Big(T_k(Z)  - \Ov{T_k(Z)}\Big) \dv\bu \, \dxdt + \int_0^T \int_\Omega \Big(\Ov{T_k(Z)} \dv \bu - \Ov{T_k(Z) \dv\bu} \Big) \, \dxdt.
\end{multline*}
Now, the effective viscous flux equality (\ref{7.11}) and (\ref{EST(ii)}) imply
$$
 \frac{1}{2\mu +\lambda} \limsup_{\delta \to 0^+} \int_0^T \int_\Omega |T_k(Z_\delta)-T_k(Z)|^{\gamma +1} \, \dxdt\leq\int_0^T \int_\Omega \Big(\Ov{T_k(Z) \dv\bu} -\Ov{T_k(Z)} \dv \bu  \Big) \, \dxdt;
$$
whence
\begin{multline*}
\frac{1}{2\mu +\lambda} \limsup_{\delta \to 0^+} \int_0^T \int_\Omega |T_k(Z_\delta)-T_k(Z)|^{\gamma +1} \, \dxdt \leq \int_0^T \int_\Omega |T_k(Z)-\Ov{T_k(Z)}| |\dv \bu| \, \dxdt \\
\leq C \|T_k(Z) -\Ov{T_k(Z)}\|_{L^1(\QT)}^{\frac{\gamma-1}{2\gamma}} \|T_k(Z) -\Ov{T_k(Z)}\|_{L^{\gamma +1}(\QT)}^{\frac{\gamma+1}{2\gamma}}.
\end{multline*}
Recall that
$$
\|T_k(Z) -\Ov{T_k(Z)}\|_{L^1(\QT)} \leq \|T_k(Z) -Z\|_{L^1(\QT)} + \|\Ov{T_k(Z)}-Z\|_{L^1(\QT)},
$$
yielding
$$
\lim_{k\to \infty} \|T_k(Z) -\Ov{T_k(Z)}\|_{L^1(\QT)} = 0.
$$
As
$$
\sup_{k \geq 1} \limsup_{\delta \to 0^+} \|T_k(Z_\delta)-T_k(Z)\|_{L^{\gamma +1}(\QT)} <+\infty,
$$
we also have
$$
\lim_{k \to \infty} \limsup_{\delta \to 0^+} \|T_k(Z_\delta) -T_k(Z)\|_{L^{\gamma +1}(\QT)} = 0.
$$
Therefore, one verifies that
$$
Z_\delta \to Z ~\text{strongly in}~L^q(\QT)
$$
for any $q<\gamma +\theta$.
The proof of Theorem \ref{mainthm2}  is finished.

\section{Proof of equivalent formulations}\label{s:8}
\label{equiv_formulations}
From Theorem \ref{mainthm2} it follows that for any $\gamma >\frac 32$  there exists a triple of functions
\begin{equation} \label{8.1}
(\vr,Z,\bu) \in  L^\infty(0,T;L^\gamma(\Omega)) \times L^\infty(0,T; L^\gamma(\Omega)) \times L^2(0,T;W^{1,2}_0(\Omega,\R^3))
\end{equation}
satisfying equations (\ref{2a}) in the sense specified in Definition \ref{weaksolutionaux}. However, in what follows, we will use the result only for $\gamma \geq \frac 95$.

Our aim will be to deduce from this the existence of $s\in L^\infty((0,T)\times\Omega)$  such that the pressure in the momentum equation equals $p=\vr^\gamma{\cal T}(s)$
satisfying either equality (\ref{weak_Z}) or the distributional formulation of (\ref{ent_trans}) with corresponding initial data { in a similar way as suggested in Feireisl et al. \cite{FEKLNOZA}.}

\subsection{The case $\gamma\geq \frac{9}{5}$}
We first present the main ideas of the proof which corresponds to the situation
$$\frac{Z_0}{\vr_0}1_{\{\vr_0=0\}} = T(s_0)^{\frac 1\gamma}1_{\{\vr_0=0\}} =1.$$
Due to the construction we know that functions $(\vr,Z,\bu)$ extended by zero outside of $\Omega$ fulfill
equations (\ref{cont1}), (\ref{content1}) in the sense of distributions on the whole $(0,T)\times\R^3$.
Therefore, we may test both of these equations by $\xi_\eta(x- \cdot),$ where  $\xi_\eta$ is a standard mollifier. We obtain the following equations
\begin{equation}\label{vr_delta}
\partial_t\vr_\eta+\dv(\vr_\eta\bu)= r^1_\eta,
\end{equation}
\begin{equation} \label{8.3}
\partial_t Z_\eta+\dv(Z_\eta\bu)= r^2_\eta,
\end{equation}
satisfied a.e. in $(0,T)\times\R^3$, where by $a_\eta$ we denoted $a\ast\xi_\eta$.
From the Friedrichs lemma (see e.g. \cite{NoSt}) we know that $r^1_\eta$, $r^2_\eta$ converge to 0 strongly in $L^1((0,T)\times\R^3)$ as $\eta\to 0^+$ {(the strong convergence of $r^1_{\eta}$ requires the stronger assumption on $\gamma$).}
Now we multiply the first equation by $-\frac{(Z_\eta+\lambda)}{(\vr_\eta+\lambda)^2}$ and the second by $\frac{1}{\vr_\eta+\lambda}$ with $\lambda>0$, respectively.  Note that for $\eta$ fixed $\partial_t\vr_\eta$ and $\partial_t Z_\eta$ belong to $L^\infty(0,T; C_c^\infty(\R^3))$, so
these are sufficiently regular test functions. After some manipulations, we obtain the following equation
\[\partial_t \left( \frac{ Z_\eta + \lambda }{\vr_\eta + \lambda } \right) + \dv \left[ \left( \frac{ Z_\eta + \lambda }{\vr_\eta + \lambda } \right) \bu \right]
-\left[ \frac{( Z_\eta + \lambda) \vr_\eta }{(\vr_\eta + \lambda)^2} + \frac{\lambda}{ \vr_\eta + \lambda }\right] \dv \bu
\]
\[
= - r^1_\eta \frac{ Z_\eta + \lambda }{(\vr_\eta + \lambda)^2 } +r^2_\eta \frac{1}{\vr_\eta + \lambda}
\]
satisfied a.e. in $(0,T)\times\R^3$.
Note that $Z_\eta(t,x)=\int_{\R^3}Z(t,y)\xi_\eta(x-y){\rm d}y\leq c^\star\int_{\R^3}\vr(t,y)\xi_\eta(x-y){\rm d}y=c^\star\vr_\eta$, therefore
$$\frac{ Z_\eta + \lambda }{\vr_\eta + \lambda } \leq \frac{ c^\star\vr_\eta + \lambda }{\vr_\eta + \lambda }\leq \max\{1,c^\star\},\qquad \frac{1}{\vr_\eta+\lambda}\leq \frac{1}{\lambda}.$$
So, for $\lambda$ fixed, we may use the strong convergence of $\vr_\eta\to \vr$, $Z_\eta\to Z$ and the dominated convergence theorem to let $\eta\to 0$ and to obtain the following equation
\[
\partial_t \left( \frac{ Z + \lambda }{\vr+ \lambda } \right) + \dv \left[\left( \frac{ Z + \lambda }{\vr+ \lambda } \right) \bu \right]
- \left[ \frac{( Z + \lambda) \vr }{(\vr + \lambda)^2} + \frac{\lambda}{ \vr + \lambda }\right] \dv \bu=0
\]
which is satisfied  in the sense of distributions on $(0,T)\times\R^3$. Before we pass to the limit with $\lambda\to 0^+$ note that we may distinguish two situations
\begin{itemize}
\item for $\vr=0$ we have $Z=0$ and therefore $\frac{ Z + \lambda }{\vr + \lambda}=1$, while $ \frac{( Z + \lambda) \vr }{(\vr + \lambda)^2} + \frac{\lambda}{ \vr + \lambda }=1$,
\item for $\vr>0$ we have  $\frac{ Z + \lambda }{\vr + \lambda}\leq \max\{1,c^\star\}$, $\vr+\lambda\to\vr,\  Z+\lambda\to Z$ strongly in $L^\infty(0;T;L^2_{\rm{{loc}}}(\R^3))$, therefore $\frac{ Z + \lambda }{\vr + \lambda}\to \frac{Z}{\vr}$ strongly in $L^\infty((0,T)\times\R^3)$ and so $ \frac{( Z + \lambda) \vr }{(\vr + \lambda)^2} + \frac{\lambda}{ \vr + \lambda }\to \frac{Z}{\vr}$.
\end{itemize}

Recall that this construction corresponds to the choice $\zeta_0=1$ in (\ref{ICZETA}). In the more general case, for ${\cal T}(s_0)^{\frac 1\gamma} =A_0$ we would have to replace $\lambda$ in the numerator by $A_0 \lambda$.

{
 The case of non-constant $A_0 1_{\vr_0=0} ={\cal T}(s_0)^{\frac 1\gamma} 1_{\vr_0=0} \in L^{\infty}(\Omega)$ demands a bit more technical treatment. Let $A$ be any solution of the transport equation (\ref{ent_trans}) with the initial data $A_0$. Such solution can be found using smoothing of $\bu$ and solving the transport equation by the method of trajectories. Along with (\ref{vr_delta}) we also test the transport equation for $A$ by the same family of mollifiers obtaining
\[
    \partial_t A_{\eta} + \bu \nabla A_{\eta} = r^3_{\eta}
\]
with $r^3_{\eta} \to 0$ in $L^{1}((0,T)\times \R^3)$ as $\eta \to 0^+$. In combination with the continuity equation we obtain
\begin{equation}
    \partial_t (Z_\eta+\lambda A_{\eta})+\dv((Z_\eta+\lambda A_{\eta})\bu)= r^2_\eta+\lambda(r^3_\eta+A_\eta \dv \bu).
\end{equation}
We multiply the last equality by $\frac{1}{\vr_\eta +\lambda}$ and mimicking the previous approach we obtain
\[
    \partial_t \left(   \frac{Z_\eta+\lambda A_{\eta}}{\vr_\eta+\lambda}    \right)
    + \dv\left[     \left(   \frac{Z_\eta+\lambda A_{\eta}}{\vr_\eta+\lambda}    \right) \bu    \right]
-\left[ \frac{( Z_\eta + \lambda A_\eta) \vr_\eta }{(\vr_\eta + \lambda)^2} + \frac{\lambda A_\eta}{ \vr_\eta + \lambda }\right] \dv \bu
\]
\[
= - r^1_\eta \frac{ Z_\eta + \lambda A_\eta }{(\vr_\eta + \lambda)^2 } +r^2_\eta \frac{1}{\vr_\eta + \lambda}+r^3_\eta \frac{\lambda}{\vr_\eta+\lambda}.
\]
Next, we let $\eta \to 0^+$ and get
\[
    \partial_t \left(   \frac{Z+\lambda A}{\vr+\lambda}    \right)
    + \dv\left[     \left(   \frac{Z+\lambda A}{\vr+\lambda}    \right) \bu    \right]
-\left[ \frac{( Z + \lambda A) \vr }{(\vr + \lambda)^2} + \frac{\lambda A}{ \vr + \lambda }\right] \dv \bu=0.
\]
Let us denote $\theta = \frac{Z}{\vr}$ for $\vr>0$ and $\theta = A$ for $\vr=0$. Observe that $c_*\leq \theta \leq c^*$ almost everywhere in $(0,T)\times \Omega$. Once again, we use the uniform boundedness of $\frac{Z+A\lambda}{\vr + \lambda}$ and send $\lambda \to 0^+$ obtaining
\[
    \partial_t \theta + \dv(\theta \bu) - \theta \dv \bu = 0
\]
or, equivalently,
\begin{equation}\label{tr_theta}
\partial_t \theta+  \bu\cdot \nabla \theta=0
\end{equation}
in the sense of distributions on $(0,T)\times\R^3$. The initial condition $A_0$ is attained in the sense of weak solutions for the transport equation.
}
In addition,
we can renormalize this equation, using any $G\in C^1(\R)$ and deduce that
\begin{equation}\label{tr_theta_ren}
\partial_t G(\theta)+  \bu\cdot \nabla G(\theta)=0
\end{equation}
is also satisfied in the sense of distributions on $(0,T)\times\R^3$. Taking for example $G(\theta)={\cal T}^{-1}(\theta^\gamma)$, we obtain equation for $s$
\begin{equation} \label{8.7a}
\partial_t  s+ \bu \cdot \nabla s = 0,
\end{equation}
 and, for $G(\theta) = B({\cal T}^{-1}(\theta^\gamma))$, also its renormalized version
\begin{equation}\label{tr_s_ren}
\partial_t B(s)+  \bu\cdot \nabla B(s)=0
\end{equation}
satisfied in the sense of distributions on $(0,T)\times\R^3$ for any $B\in C^1(\R)$.

In order to obtain the weak solution to problem (\ref{content1_1}) we need to test equation (\ref{8.7a}) by $\vr$. This is, however, not allowed due to low regularity of $\vr$. Instead we will use $\varphi\vr_\eta$, where $\varphi\in C^\infty_c((0,T)\times \R^3)$ and $\vr_\eta$ satisfies (\ref{vr_delta}).
Here we essentially use the fact that $\vr\in L^2((0,T)\times\Omega)$, hence this step cannot be repeated for $\gamma$ less then $\frac{9}{5}$.
Then we also multiply (\ref{vr_delta}) by $\varphi s$ and sum up the obtained expressions to deduce
\[ \int_{0}^T \int_{\R^3}\vr_\eta s \partial_{t}\varphi\dxdt+\int_{0}^T \int_{\R^3} (\vr_\eta s \bu)\cdot \nabla\varphi\dxdt
=-\int_{0}^T \int_\Omega r^1_\eta s \varphi\dxdt.
\]
Having this formulation we pass to the limit with $\eta\to0^+$, note that the term on the
r.h.s. vanishes and therefore we obtain
\begin{equation} \label{8.8a}
\int_{0}^T \int_{\R^3}\vr s\partial_{t}\varphi\dxdt+\int_{0}^T \int_{\R^3} (\vr s\bu)\cdot \nabla\varphi\dxdt=0.
\end{equation}
Note that if we start from (\ref{tr_s_ren}), we can also get
\begin{equation} \label{dist_theta}
\int_{0}^T \int_{\R^3}\vr B(s)\partial_{t}\varphi\dxdt+\int_{0}^T \int_{\R^3} (\vr B(s)\bu)\cdot \nabla\varphi\dxdt=0
\end{equation}
for any $B\in C^1(\R)$.

Thus we have almost our formulation from Definition \ref{weaksolution}, except the initial condition. Indeed, for the moment we only know that equation $\partial_{t}(\vr s)+\dv(\vr  s\bu)=0$
is satisfied in the sense of distributions on $(0,T)\times \R^3$. Moreover, from the $L^\infty((0,T)\times\R^3)$ bound on $ s$ and the above equation, we deduce using Arzel\`a--Ascoli theorem  that $\vr s\in C_{w}([0,T];L^\gamma(\Omega))$.

To recover the initial and the terminal condition, we need to use a test function $\varphi$ from the space $C^1([0,T]\times\Ov{\Omega})$ instead of $C^\infty_c((0,T)\times\Omega)$. To this purpose we define the following function
\[
\varphi_\tau(t,x)=\left\{
\begin{array}{lll}
\frac{t}{\tau}\varphi(\tau,x) & \mbox{for} &t\leq \tau\\
\varphi(t,x) & \mbox{for} &\tau\leq t\leq T-\tau, \\
\frac{T-t}{\tau}\varphi(T-\tau,x) & \mbox{for} &T-\tau\leq t
\end{array}\right.
\]
for $\varphi\in C^1([0,T]\times\Ov{\Omega})$. Note that $\varphi_\tau$ is an admissible test function for (\ref{dist_theta}), we can write
\begin{equation} \label{dist_theta2}
\int_{\tau}^T \int_{\Omega}\vr s\partial_{t}\varphi\dxdt+\int_{0}^T \int_{\Omega} (\vr s\bu)\cdot \nabla\varphi\dxdt=-
\frac{1}{\tau}\int_{0}^\tau \int_{\Omega}\vr s\varphi(\tau,x)\dxdt+\frac{1}{\tau}\int_{T-\tau}^T \int_{\Omega}\vr s\varphi(T-\tau,x)\dxdt
.
\end{equation}
We represent function $\varphi(t,x)$ as $\varphi(t,x)=\psi(t)\zeta(x)$ (or approximate by such sums), where $\psi\in C^\infty_c((0,T))$, $\zeta\in C^\infty_c(\Ov{\Omega})$, then the r.h.s. of (\ref{dist_theta2}) equals
\[
-\frac{1}{\tau}\int_{0}^\tau \int_{\Omega}\vr s\varphi(\tau,x)\dxdt
+\frac{1}{\tau}\int_{T-\tau}^T \int_{\Omega}\vr s\varphi(T-\tau,x)\dxdt
\]
\[=
-\frac{\psi(\tau)}{\tau}\int_{0}^\tau \int_{\Omega}\vr s\zeta(x)\dxdt
+\frac{\psi(T-\tau)}{\tau}\int_{T-\tau}^T \int_{\Omega}\vr s\zeta(x)\dxdt,
\]
and by the weak continuity of $\vr s$, letting $\tau \to 0$, we conclude that
\begin{equation} \label{dist_theta3}
\int_{0}^T \int_{\Omega}\vr s\partial_{t}\varphi\dxdt+\int_{0}^T \int_{\Omega} (\vr s\bu)\cdot \nabla\varphi\dxdt= -\int_{\Omega}(\vr s)(0,\cdot)\varphi(0,\cdot)\dx+\int_{\Omega}(\vr s)(T,\cdot)\varphi(T,\cdot)\dx
\end{equation}
\[=-\int_{\Omega}S_0(\cdot)\varphi(0,\cdot)\dx+\int_{\Omega}(\vr s)(T,\cdot)\varphi(T,\cdot)\dx\]
and so the statement of Theorem \ref{mainthm1} is proven. Similarly we may get the initial condition for $s(t,\cdot)$.

\subsection{The case $\gamma>\frac{3}{2}$}
The case of general $\gamma$ has to be treated differently, due to the lack of $L^2((0,T)\times\Omega)$ estimate on $\vr$ and $Z$. The latter is necessary to apply the DiPerna-Lions technique of renormalization of the transport equation \cite{diperna1989ordinary}. In the general case, we have to use more subtle technique developed by Feireisl, see e.g. \cite{feireisl2004dynamics} and used recently in \cite{michalek} to study stability of solutions to system (\ref{1e}).
{ In this section we will extend the stability result and prove existence of solutions to system (\ref{1e}) by giving a suitable sequence of approximative problems.
}

As a starting point for the further analysis we take system  (\ref{wsstep2})  with $\beta>\max\{\gamma,4\}$ and initial data $Z_{0,\delta}=\frac{\rho_{0,\delta}}{\zeta_{0,\delta}}$, with $\zeta_{0,\delta}$ satisfying \eqref{ICZETA}.  At this stage, we are able to repeat the procedure described in the previous section in order to recover equation (\ref{tr_theta}) for $\theta_\delta$ and its renormalized version (\ref{tr_theta_ren})
in the sense of distributions on $(0,T)\times\Omega$. Moreover, $ \theta_\delta, \theta_\delta^{-1}$ are bounded in $L^\infty((0,T)\times\Omega)$ uniformly with respect to $\delta$.
Thus, our system
\begin{subequations}\label{wsstep3a}
\begin{equation}\label{contstep3a}
\partial_t \vr_\delta + \dv(\vr_\delta \bu_\delta)= 0,
\end{equation}
\begin{equation}\label{entstep3a}
\partial_t B(\theta_\delta) +  \bu_\delta\cdot \nabla B(\theta_\delta)=0,
\end{equation}
\begin{equation}\label{momstep3a}
\partial_t (\vr_\delta \bu_\delta) + \dv(\vr_\delta \bu_\delta \otimes \bu_\delta) + \nabla (\vr_\delta \theta_\delta)^\gamma + \delta  \nabla (\vr_\delta \theta_\delta)^\beta = \dv\tn{S}(\nabla \bu_\delta),
\end{equation}
\end{subequations}
where $\theta_\delta = \frac{Z_\delta}{\vr_\delta}$, is satisfied in the sense of distributions on $(0,T)\times\Omega$. { Observe that $\vr_{\delta}$ belongs (not necessarily uniformly with respect to $\delta$) to $L^{\beta}((0,T)\times\Omega)$ for each $\delta>0$. At this stage we can use the stability result given by Theorem 3.1 in \cite{michalek} and finish the proof of Theorem \ref{mainthm3}.}

For the sake of completeness, we will provide the limit process $\delta \to 0^+$ following the arguments from \cite{michalek}.
We take $\zeta_\delta= \theta^{-1}_\delta$ and denote $\zeta$ the weak$-\star$ limit of $\zeta_\delta$ (or its subsequence) in $L^{\infty}((0,T)\times \Omega)$. For any $\delta>0$ the pair $(\zeta_\delta,\bu_\delta)$ satisfies the transport equation in the weak sense (see Definition \ref{weaksolution_without_rho}) along with the initial data $\zeta_{0,\delta}=\frac{Z_{0,\delta}}{\vr_{0,\delta}}$.
As we know from Section \ref{artif_pressure_limit}, sequence $Z_\delta = \frac{\rho_\delta}{\zeta_\delta}$ (or its subsequence) converges strongly in $L^q((0,T)\times \Omega)$ to $Z$ for any $q<\gamma+\theta$.
Hence for the same $q$ we have
\[
    \vr_\delta = Z_\delta \zeta_\delta \to Z \zeta ~\text{weakly in } L^q((0,T)\times \Omega).
\]
Therefore $\zeta$, $\vr$ and $\bu$ satisfy in the weak sense
\begin{subequations}\label{wsstep4}
\begin{equation}\label{contstep4}
\partial_t \vr + \dv(\vr \bu)= 0,
\end{equation}
\begin{equation}\label{momstep4}
\partial_t (\vr \bu) + \dv(\vr \bu \otimes \bu) + \nabla \left(\frac{\vr}{\zeta}\right)^\gamma  = \dv\tn{S}(\nabla \bu).
\end{equation}

The next step is to show that the pair $(\zeta,\bu)$ satisfies the transport equation
\begin{equation}\label{transp_eq4}
    \partial_t \zeta -\bu \cdot \nabla \zeta = 0
\end{equation}
in the weak sense.
\end{subequations}
We apply the Div-Curl lemma (Lemma \ref{DivCurl}) with
\[
    \bU_\delta=(\zeta_\delta,\zeta_\delta \bu_\delta),\quad \bV_\delta=(\bu^j_\delta,0,0,0),
\]
where $j\in \{1,2,3\}$. We know that $\dv \bU_{\delta}$ and $\operatorname{curl} \bV_\delta$ are bounded in $L^2((0,T)\times \Omega)$, hence precompact in $W^{-1,2}((0,T)\times \Omega)$. Therefore we obtain $\zeta_\delta \bu_\delta \to \zeta \bu$ weakly in $L^2((0,T)\times \Omega,\R^3)$.
Due to the strong convergence of the pressure terms $Z^\gamma_\delta$ we get by the means of Lemma \ref{effectivestep1} (and Remark \ref{gen_eff_visc_flux})
\[
    \zeta_\delta \dv \bu_\delta \to \zeta \dv \bu ~\text{weakly in } L^2((0,T)\times\Omega).
\]
Therefore (\ref{transp_eq4}) is satisfied in the weak sense and due to the boundedness of $\zeta$ it is also a renormalized solution.
The proof of Theorem \ref{mainthm3} is complete.

\bigskip

\noindent{\bf Acknowledgements.}
The work  of D.M. and A.N. has been partially supported by the MODTERCOM project within the APEX programme of the Provence-Alpes-C\^ote d'Azur region.
The work of P.B.M. and  E.Z. has been partly supported by the National  Science  Centre  grant 2014/14/M/ST1/00108 (Harmonia).
M.M. acknowledges the support of the GACR (Czech Science Foundation) project 13-00522S in the framework of RVO: 67985840.
The work of M.P. was partly supported by the GACR (Czech Science Foundation) project 16-03230S.

\end{document}